\def\mathds{\mathbh}
\newcommand{\eqref}[1]{(\ref{#1})}
\newcommand{\inftwo}[2]{\mathop{\inf_{#1}}_{#2}} 
\renewcommand{\b}{\beta}
\newcommand{\1}{\mathbh{1}}
\newcommand{\var}{\operatorname{Var}}
\newcommand{\tmix}{T_{\mathrm{ mix}}}
\newcommand{\trel}{T_{\mathrm{ rel}}}
\newcommand{\D}{\Delta}
\renewcommand{\b}{\beta}
\renewcommand{\L}{\Lambda}
\renewcommand{\a}{\alpha}
\renewcommand{\t}{\tau}
\newcommand{\z}{\zeta}
\newcommand{\e}{\varepsilon}
\renewcommand{\r}{\rho}
\renewcommand{\O}{\Omega}
\newcommand{\gap}{\mathrm{gap}}
\newtheorem{theorem}{Theorem}[section]
\newtheorem{lemma}[theorem]{Lemma}
\newtheorem{proposition}[theorem]{Proposition}
\newtheorem{claim}[theorem]{Claim}
\newtheorem{maintheorem}{Theorem}
\newtheorem{theorema}{Theorem}[section]
\newtheorem{lemmaa}[theorema]{Lemma}
\newtheorem{claima}[theorema]{Claim}
\newcommand{\N}{\mathbb N}
\newcommand{\cA}{\mathcal A}
\newcommand{\cB}{\mathcal B}
\newcommand{\cC}{\mathcal C}
\newcommand{\cD}{\mathcal D}
\newcommand{\cE}{\mathcal E}
\newcommand{\cK}{\mathcal K}
\newcommand{\cL}{\mathcal L}
\newcommand{\cS}{\mathcal S}
\newcommand{\cT}{\mathcal T}
\newcommand{\cZ}{\mathcal Z}
\newcommand{\bbE}{{\mathbb E}}
\newcommand{\bbN}{{\mathbb N}}
\newcommand{\bbP}{{\mathbb P}}
\newcommand{\bbR}{{\mathbb R}}
\def\a{\alpha}
\def\b{\beta}
\def\e{\varepsilon}
\def\h{\eta}
\def\p{\pi}
\def\r{\rho}
\def\s{\sigma}
\def\t{\tau}
\def\z{\zeta}
\def\D{\Delta}
\def\G{\Gamma}
\def\L{\Lambda}
\def\O{\Omega}
\renewcommand{\div}{\operatorname{div}}
\begin{document}
\begin{frontmatter}

\title{Relaxation to equilibrium of generalized East processes on
${\mathbb Z}
^{d}$:
Renormalization group analysis and energy-entropy competition}
\runtitle{Relaxation to equilibrium of generalized East processes}

\begin{aug}
\author[A]{\fnms{Paul}~\snm{Chleboun}\ead[label=e1]{p.i.chleboun@warwick.ac.uk}\thanksref{T1}},
\author[B]{\fnms{Alessandra}~\snm{Faggionato}\ead[label=e2]{faggiona@mat.uniroma1.it}}
\and
\author[C]{\fnms{Fabio}~\snm{Martinelli}\corref{}\ead[label=e3]{martin@mat.uniroma3.it}}
\affiliation{Warwick University, Universit\`a La Sapienza and
Universit\`a Roma Tre}
\runauthor{P. Chleboun, A. Faggionato and F. Martinelli}
\thankstext{T1}{Supported
by the University of Warwick IAS through a Global Research Fellowship.}
\address[A]{P. Chleboun\\
Mathematics Institute and\\
Centre for Complexity Science\\
Warwick University\\
Coventry CV4 7AL\\
United Kingdom\\
\printead{e1}}
\address[B]{A. Faggionato\\
Dipartimento di Matematica\\
Universit\`a La Sapienza\\
P.le Aldo Moro 2\\
00185 Roma\\
Italy\\
\printead{e2}}
\address[C]{F. Martinelli\\
Dipartimento di Matematica e Fisica\\
Universit\`a Roma Tre \\
Lg. San Murialdo 1\\
00146 Roma\\
Italy\\
\printead{e3}}
\end{aug}

%
\received{\smonth{4} \syear{2014}}
%
\revised{\smonth{1} \syear{2015}}

%
\begin{abstract}
We consider a class of kinetically constrained interacting particle
systems on ${\mathbb Z}^d$ which
play a key role in several heuristic qualitative and quantitative
approaches to describe
the complex behavior of glassy
dynamics. With
rate one and independently among the vertices of ${\mathbb Z}^d$, to each
occupation variable $\eta_x\in\{0,1\}$ a new value is proposed by
tossing a $(1-q)$-coin. If a certain local constraint is satisfied by
the current
configuration the proposed move is accepted, otherwise it is
rejected. For $d=1$, the constraint requires that there is a vacancy at
the vertex to the
left of the updating vertex. In this case, the process is the
well-known East process. On ${\mathbb Z}^2$,
the West or the South neighbor of the updating vertex must contain a
vacancy, similarly, in higher dimensions. Despite of their apparent
simplicity, in the limit $q\searrow0$ of low vacancy density,
corresponding to a low temperature
physical setting, these processes feature a rather complicated dynamic
behavior with hierarchical relaxation time scales,
heterogeneity and universality. Using renormalization group ideas, we
first show
that the relaxation time on ${\mathbb Z}^d$ scales as the $1/d$-root of the
relaxation time of the East process, confirming indications coming from
massive numerical simulations.
Next, we compute the relaxation time in finite boxes by carefully
analyzing the subtle energy-entropy
competition, using a multiscale
analysis, capacity methods and an algorithmic construction. Our results
establish dynamic heterogeneity and a
dramatic dependence on the boundary conditions. Finally, we prove a
rather strong anisotropy property of these processes: the creation of
a new vacancy at a vertex $x$ out of an isolated one at the origin (a
seed) may occur on
(logarithmically) different time scales which heavily depend not only
on the $\ell_1$-norm of $x$ but also on its direction.
\end{abstract}

%
\begin{keyword}[class=AMS]
\kwd{60K35}
\kwd{82C20}
\end{keyword}

\begin{keyword}
\kwd{East model}
\kwd{relaxation time}
\kwd{spectral gap}
\kwd{glassy dynamics}
\kwd{renormalization group}
\end{keyword}
%
\end{frontmatter}

\section{Introduction}\label{sec1}
The East process is a one-dimensional spin system introduced in the
physics literature by J\"{a}ckle and Eisinger~\cite{JACKLE} in 1991
to model the behavior of cooled liquids near the glass transition
point, specializing a class of models that goes back to~\cite{FH}.
Each site $x\in{\mathbb Z}$ carries a $\{0,1\}$-value
(vacant/occupied) denoted
by $\eta_x$. The process attempts to update $\eta_x$ to $1$ at rate
$0<p<1$ (a parameter)
and to $0$ at rate $q=1-p$, only accepting the proposed update if
$\eta_{x-1}=0$ (a ``kinetic constraint''). Since the constraint at
site $x$ does not depend on the spin at $x$, it is straightforward to verify
that the product $\operatorname{Bernoulli}(1-q)$ measure is a reversible measure.

Despite of its apparent simplicity, the East model
has attracted much attention both in the physical and in the
mathematical community (see, e.g.,
\cite{SE1,SE2,CDG,Aldous,East-Rassegna}). It in fact
features a
surprisingly rich behavior, particularly when $q\ll1$ which
corresponds to a low temperature setting in the physical
interpretation, with a host of phenomena
like mixing time cutoff and front propagation
\cite{East-cutoff,Blondel}, hierarchical coalescence and universality
\cite{FMRT-cmp} and dynamical heterogeneity
\cite{CFM,CFM-JSTAT}, one of the main signatures
of glassy dynamics. Dynamical heterogeneity is strongly associated to a
\emph{broad spectrum}
of relaxation time scales which emerges as the result of a
subtle \emph{energy-entropy} competition. Isolated vacancies
with, for example, a block of $N$ particles to their left, cannot in fact
update unless the system injects enough
additional vacancies in a cooperative way in order to
unblock the target one. Finding the correct time scale on which this
unblocking process occurs requires a highly nontrivial analysis to
correctly measure the energy contribution (how many extra vacancies are
needed) and the entropic one (in how many ways the
unblocking process may occur). The final outcome is a very nontrivial
dependence of the corresponding characteristic time scale on the
equilibrium vacancy density $q$ and on the block
length $N$ (cf.
\cite{CFM}, Theorems 2 and 5).

Mathematically, the East model poses
very challenging and interesting problems because of the hardness of
the constraint and the fact that it is not attractive. It also has
interesting ramifications in combinatorics \cite{CDG},
coalescence processes \cite{FMRT,FRT,FMRT-cmp} and random walks on
triangular matrices \cite{Peres-Sly}. Moreover, some of the
mathematical tools
developed for the analysis of its relaxation time scales proved to
be quite powerful also in other contexts such as card shuffling problems
\cite{Bhatnagar:2007tr} and random evolution of surfaces
\cite{PietroCaputo:2012vl}. Finally, it is worth mentioning that some
attractive conjectures which
appeared in the physical literature on the basis of numerical
simulations, had to be thoroughly revised
after a sharp mathematical analysis \cite{CMRT,CFM,CFM-JSTAT}.

Motivated by a series of nonrigorous contributions on \emph{realistic}
models of glass formers (cf. \cite{Garrahan,Garrahan2003,Keys2013}),
in this paper we examine for the first time a natural generalization of
the East process
to the higher dimensional lattice ${\mathbb Z}^d$, $d>1$, in the sequel
referred to as the \emph{East-like}
process. In one dimension, the East-like process coincides with the
East process. In $d=2$, the process evolves
similarly to the East process but now the kinetic
constraint requires that the South \emph{or} West neighbor of the
updating vertex contains \emph{at least} one vacancy analogously in
higher dimensions.

An easy comparison argument with the one-dimensional case shows that the
East-like process is always ergodic, with a relaxation time
$\trel({\mathbb Z}^d;q)$ which is bounded from above by
$\trel({\mathbb Z};q)$.\setcounter{footnote}{1}\footnote{Notice that the more constrained North--East
model in which the constraint at $x$
requires that \emph{both} the West \emph{and} the South neighbor of
$x$ contains a vacancy
has a ergodicity breaking transition when $p$ crosses the oriented
critical percolation value.} However, massive numerical simulations
\cite{Garrahan} suggest
that $\trel({\mathbb Z}^d;q)$ is much smaller than $\trel({\mathbb
Z};q)$ and
that, as
$q\searrow0$, it scales as
$\trel({\mathbb Z};q)^{1/d}$, where the $1/d$-root is a signature of several
different effects on the cooperative
dynamics of the sparse vacancies: the entropy associated with the
number of ``oriented'' paths
over which a vacancy typically sends a wave of influence and the
energetic cost of creating the required number of vacancies.

Our first result (cf. Theorem~\ref{th:main1} below) confirms the above conjecture by a novel combination
of renormalization group ideas and block dynamics on one hand and an
algorithmically built bottleneck using capacity methods on the other.

Our second result analyzes the relaxation time in a finite box. In
this case, in order to guarantee the irreducibility of the chain, some
boundary conditions must be introduced by declaring
\emph{unconstrained} the spins belonging to certain subsets of the
boundary of $\L$. For example, in two dimensions one could imagine to
freeze to the
value $0$ all the spins belonging to the South--West (external) boundary
of the
box. In this case, we say that we have \emph{maximal} boundary
conditions. If instead all the spins belonging to the South--West
(external) boundary are frozen to be~$1$ with the exception of one spin
adjacent to the South--West corner then we say that we have
\emph{minimal} boundary conditions. In Theorem~\ref{th:main2},
we compute the precise asymptotic as $q\searrow0$ of
the relaxation
time with maximal and minimal boundary conditions and show that there
is a dramatic difference between the two. The result extends also to
mixing times.

The third result concerns another
time scale which is genuinely associated with the out-of-equilibrium
behavior. For simplicity, consider the process
on ${\mathbb Z}^2$ and, starting from the configuration with a single vacancy
at the origin, let $T(x;q)$ be the mean hitting time of the set $\{\eta
\dvtx
\eta_x=0\}$ where $x$ is some vertex in the first quadrant. In other
words, it is the mean time that it takes for
the initial vacancy at the origin to create a vacancy at $x$. Here, the
main outcome is a strong dependence of $T(x;q)$ as $q\searrow0$ not
only on
the $\ell_1$-norm $\|x\|_1$ but also on the \emph{direction} of $x$
(cf. Theorem~\ref{th:main3}
and Figure~\ref{fig:1bis}). When $\log_2 \|x\|_1\gg\sqrt{\log_2
1/q,}$ the
process proceeds much faster (on a logarithmic scale) along the
diagonal direction than along the coordinate axes. If instead $\|x\|
_1=O(1)$ as
$q\searrow0$, then the asymptotic behavior of $T(x;q)$ is essentially
dictated by
$\|x\|_1$. This crossover phenomenon is yet another
instance of the key role played by the energy-entropy competition in
low temperature kinetically constrained models.

Finally, in the \hyperref[app]{Appendix} we have collected some results on the
exponential rate of decay of the \emph{persistence function} $F(t)$,
that is, the
probability for the stationary infinite volume East-like process that
the spin at the origin does not flip before time $t$. Such a rate of
decay is often used by physicists as a proxy for the inverse relaxation
time. For the East model, we indeed prove that the latter assumption is
correct. In higher dimension, we show that the above rate of decay
coincides with that of the time auto-correlation of the spin at the
origin. Our results are quite similar to those obtained years ago
for the Ising model by different methods \cite{Holley}.

We point out that in \cite{EPL} we have provided an overview of the
results and mathematical tools of this paper, with special emphasis to
the connections with the existing physics literature on the subject.

\subsection{Outline of the paper}
In the next section, we define the model and quantities of interest,
in Section~\ref{sec:main} we state our main results.
In Section~\ref{troppo}, we collect various technical tools:
monotonicity, graphical construction, block dynamics, capacity methods
and the bottleneck inequality. Section~\ref{bottiglia} is devoted to an
algorithmic construction of an efficient bottleneck and it will
represent the key ingredient for the proof of the various lower bounds in
Theorems \ref{th:main2} and \ref{th:main3}. Theorems \ref{th:main1},
\ref{th:main2} and~\ref{th:main3} are proved in Sections~\ref{sec:5},
\ref{sec:6} and \ref{sec:7}, respectively. Although these proofs have
been divided into different sections, they are actually linked. In
particular, the proof of the upper bound in Theorem~\ref{th:main1}
uses the upper bound for $n \leq\theta_q$ in \eqref{eq:3} of
Theorem~\ref{th:main2} and the proof of the upper bound in \eqref{eq:2} for
$n \geq\theta_q /d$ of Theorem~\ref{th:main2} uses the upper bound in Theorem~\ref
{th:main1}.
Finally, we have collected in the \hyperref[app]{Appendix} some results on the exponential
rate of decay of the persistence function.

\section{Model and main results}
\subsection{Setting and notation}
\label{setting} Given the $d$-dimensional lattice ${\mathbb Z}^d$, we let
${\mathbb Z}_+^d:=\{x=(x_1,\ldots,x_d)\in{\mathbb Z}^d \dvtx x_i\ge1
\ \forall i\le
d\}$.
Given $x\in{\mathbb Z}^d$ and $A\subset
{\mathbb Z}^d$, we let $\|x\|_1:=\sum_{i=1}^d |x_i|$ and $\|A\|_1:=
\sup_{x,y
\in A} \|x-y\|_1$. A box in ${\mathbb Z}^d$ will be any set $\L$ of
the form
$\prod_{i=1}^d
[a_i,b_i], a_i\leq b_i\ \forall i$, where here and in the
sequel it is understood that the interval $[a_i,b_i]$ consists of all
the points $x\in{\mathbb Z}$ with $a_i\le x\le b_i$. We call the vertices
$(a_1,\ldots,a_d)$ and $(b_1,\ldots,b_d)$ the lower and upper corner of
$\L$, respectively.

Let $\cB:= \{ e_1, e_2, \ldots, e_d\}$ be the canonical basis of
${\mathbb Z}^d$. The \emph{East-like} boundary of a box $\L$, in the sequel
$\partial_E
\L$, is the set
\[
\partial_E \L:= \bigl\{x \in{\mathbb Z}^d\setminus\L
\dvtx x+e \in\L \mbox{ for some } e \in\cB \bigr\}.
\]
Given $\D\subset{\mathbb Z}^d$, we will denote by $\O_\D$ the
product space $ \{0,1\}^\D$ endowed with the product topology. If $\D=
{\mathbb Z}^d$, we simply write $\O$. In the sequel, we will refer to the
vertices of $\D$ where a given configuration $\eta\in\O_\D$ is
equal to
one (zero) as the \emph{particles} (\emph{vacancies}) of $\eta$. Given
two disjoint sets $V, W \subset
{\mathbb Z}^d$ together with $(\xi,\eta)\in\O_V \times\O_W$, we
denote by
$\xi\eta$ the
configuration in $\O_{V \cup W}$ which coincides with $\xi$ in $V$ and
with $\eta$ in $W$. If $V\subset\D$ and $\eta\in\O_\D$, we will write
$\eta_V$ for the restriction of $\eta$ to $V$.

For any box $\L$, a configuration $\s\in
\O_{\partial_E \L}$ will be referred to as a \emph{boundary condition}.
A special role is assigned to the following class of
boundary conditions.
%
\begin{definition}
\label{cond_ergodico}
Given a box $\L= \prod_{i=1}^d [a_i,b_i] $, we will say that a boundary condition $\s$ is
\emph{ergodic} if there exists $e \in\cB$ such that
$\s_{a-e}=0$, $a= (a_1, \ldots, a_d)$.
We call the boundary condition identically equal to
zero \emph{maximal}. If instead $\s$ is such that by
removing one vacancy in $\s$ one obtains a nonergodic boundary
condition then $\s$ is said to be \emph{minimal}. Equivalently, $\s$ is
minimal if it has a unique vacancy at $a-e$ for some $e\in\cB$. Notice
that for
$d=1$ the maximal and minimal boundary conditions coincide.
\end{definition}
%
\subsection{The finite volume East-like process}
Given a box $\L$
and an ergodic boundary configuration $\s$, we
define the
\emph{constraint} at site $x\in\L$ with boundary
condition $\s$ as the indicator function on $\O_\L$
\[
c_x^{\L, \s} (\eta):= \mathds{1}_{\{\omega\dvtx  \exists e \in\cB\
\mathrm{such\ that\ } \omega_{x-e}=0\}} (\eta\s).
\]
Then the East-like process with parameter $q \in(0,1)$ and boundary
configuration $\s$ is the continuous time Markov chain with state space
$\O_\L$ and infinitesimal generator
%
\begin{eqnarray}
\label{eq:gen} \cL^\s_\L f(\eta) &=& \sum
_{x \in\L} c_x^{\L, \s}(\eta) \bigl[
\eta_x q+ (1-\eta_x)p \bigr] \cdot \bigl[ f \bigl(
\eta^x \bigr) -f(\eta) \bigr]
\nonumber
\\[-8pt]
\\[-8pt]
\nonumber
&=& \sum_{x \in\L} c_x^{\L, \s}(
\eta) \bigl[\pi_x (f)- f \bigr] (\eta),
\end{eqnarray}
where $p:= 1-q$, $\eta^x$ is the configuration in $\O_\L$ obtained
from $\eta$ by flipping its value at $x$ and $\pi_x$ is the
$\operatorname{Bernoulli}(p)$ measure on the spin at $x$.

Since the local constraint $c_x^{\L, \s}(\eta)$ does not depend on
$\eta_x$ and the boundary condition is ergodic, it is simple to check
that the East-like process is an ergodic chain reversible
w.r.t. the product $\operatorname{Bernoulli}(p)$ measure $\pi_\L=\prod_{x\in\L
}\pi_x$
on $\O_\L$.
We will denote by $\bbP^{\L,\s}_\eta(\cdot)$ and
$\bbE^{\L,\s}_\eta(\cdot)$ the law and the associated expectation of
the process started from $\eta$.

\begin{remark}
When $d=1$, the East-like process coincides with the well-known East
process.
\end{remark}
Next, we recall the definition of spectral gap and relaxation time. To
this aim, given $f\dvtx \O_\L\to\bbR$ and $V \subset\L$, we define
$\var_V(f)$ as the conditional variance of $f$ w.r.t. to $\pi_V$ given
the variables outside $V$. The quadratic
form or \emph{Dirichlet} form associated to $-\cL^\s_\L$ will be
denoted by $\cD^\s_\L$ and it takes the form
%
\begin{equation}
\label{eq:DirForm} \cD^\s_\L(f):=\pi_\L \bigl(
f \bigl(- \cL^\s_\L f \bigr) \bigr) =\sum
_{x\in\L
}\pi_\L \bigl(c^{\L,\s}_x
\var_x(f) \bigr).
\end{equation}
%
\begin{definition}[(Relaxation time)]
The smallest positive eigenvalue of $-\cL^\s_\L$
is called the spectral gap and it is denoted by
$\gap(\cL_{\L}^\s)$. It satisfies the Rayleigh--Ritz variational principle
%
\begin{equation}
\label{eq:gap} \gap \bigl( \cL^\s_\L \bigr):= \inftwo{f
\dvtx \O_\L\mapsto\bbR} {f\ \mathrm{nonconstant} } \frac{ \cD^\s_\L(f) }{\var_\L(f) }.
\end{equation}
The relaxation time $T^\s_\mathrm{ rel} (\L)$ is defined as the inverse of
the spectral gap:
%
\begin{equation}
\label{rilasso} T_\mathrm{ rel}^\s(\L)= \frac{1}{ \gap(\cL^\s_\L)}.
\end{equation}
Equivalently, the relaxation time is the best constant $c$ in the
Poincar\'e inequality
\[
\var_\L(f)\le c \cD^\s_\L(f)\qquad \forall f.
\]
\end{definition}

\subsection{The infinite volume East-like process}
We now define the East process on the entire lattice ${\mathbb Z}^d$. Let
$
c_x (\eta):= \mathds{1}_{\{\omega\dvtx  \exists e \in\cB\ \mathrm{such\ that\ }
\omega_{x-e}=0\}}(\eta)$,
be the constraint at $x$.
Then the East-like process on ${\mathbb Z}^d$ is the continuous time Markov
process with state space $\O$, with reversible measure given by the
product $\operatorname{Bernoulli}(p)$ measure $\pi=\prod_{x\in{\mathbb Z}^d}\pi_x$ and
infinitesimal generator $\cL$ whose
action on functions depending on finitely many spins is given by
%
\begin{eqnarray}\label{eq:generator}
\cL f(\eta) &=& \sum_{x \in{\mathbb Z}^d} c_x(\eta)
\bigl[ \eta_x q+ (1-\eta _x)p \bigr] \cdot \bigl[ f \bigl(
\eta^x \bigr) -f(\eta) \bigr]
\nonumber
\\[-8pt]
\\[-8pt]
\nonumber
&=&\sum_{x \in{\mathbb Z}^d} c_x(\eta)
\bigl[\pi _x (f)- f \bigr] (\eta).
\end{eqnarray}
We will denote by $\bbP_\eta(\cdot)$ and
$\bbE_\eta(\cdot)$ the law and the associated expectation of
the process started from $\eta$.
We will also denote by $\gap(\cL)$ and $T_\mathrm{ rel} ({\mathbb Z}^d)$
the spectral
gap and relaxation time defined similar to the finite
volume case.

It is a priori not obvious that $T_\mathrm{ rel}
({\mathbb Z}^d)<+\infty$ for all values of $q\in(0,1)$. However, we
observe that
the East-like process is \emph{less} constrained than a infinite
collection of independent
one-dimensional East processes, one for every line in ${\mathbb Z}$ parallel
to one of the coordinate axis, each of which has a finite relaxation
time \cite{Aldous}; hence the conclusion.\vspace*{1pt} A formal proof goes
as follows. Define $c_x^\mathrm{ East}(\eta)=\mathds{1} (\eta
_{x-e_1}=0 )$ and observe that $c_x(\eta)\ge c_x^\mathrm{ East}(\eta
)$. Therefore, the
Dirichlet form $\cD(f)=\sum_x \pi (c_x \var_x(f) )$ of the
East-like process is bounded from below by $\sum_x \pi (c^\mathrm{
East}_x \var_x(f) )$
which is nothing but the Dirichlet form of a collection of independent
East processes, one for every line in ${\mathbb Z}^d$
parallel to the first coordinate axis. The Rayleigh--Ritz variational
principle for the spectral gap implies that $T_\mathrm{ rel}
({\mathbb Z}^d)$ is not larger that the relaxation time of the above product
process. In turn, by the tensorization property
of the spectral gap (see, e.g., \cite{Saloff}), the relaxation
time of the product process coincides with that of the one-dimensional
East process $\trel({\mathbb Z})$.
In conclusion,
%
\begin{equation}
\label{eq:1} T_\mathrm{ rel} \bigl({\mathbb Z}^d \bigr)\le
T_\mathrm{ rel}({\mathbb Z})\qquad \forall d\ge1.
\end{equation}

\subsection{Main results}
\label{sec:main}
In order to present our main results, it will be convenient to fix some extra
notation. First, since
we will be interested in the small $q$ regime, the dependence
on $q$ of the various time scale characterizing the relaxation
toward equilibrium will be added to their notation.
Second, the finite volume East-like process with maximal or minimal
boundary conditions will exhibit
quite different relaxation times for $d\ge2$ and, therefore, they will
have a
special notation. More precisely:
\begin{itemize}
\item if the boundary condition $\s$ outside a box $\L$ is
\emph{maximal \textup{(}minimal\textup{)}} we
will write $T_\mathrm{
rel}^\mathrm{ max}(\L;q)$ [$T_\mathrm{
rel}^\mathrm{ min}(\L;q)$] instead of $T_\mathrm{
rel}^{\s}(\L;q)$.
\item In the special case in which $\L$ is the cube $[1,L]^d$ of
side $L$, we will write
$T_\mathrm{ rel}^\s(L;q)$ instead of $T_\mathrm{ rel}^\s(\L)$.
\end{itemize}
With the above notation, the first theorem pins down the dependence on
the dimension $d$ of the relaxation time for the process on ${\mathbb Z}^d$.
Before stating it, we recall the precise asymptotic of
$\trel({\mathbb Z};q) $ as $q\downarrow0$. Let
$\theta_q:=\log_2(1/q)$. In \cite{Blondel2}, Lemma~6.3, it was proved
that, for any $L\ge2^{\theta_q}$,
\[
\trel(L;q)=2^{O(\theta_q)}\trel \bigl(2^{\theta_q};q \bigr),
\]
with $O(\theta_q)$ uniform in $L$. In turn, the relaxation time on scale
$2^{\theta_q}$ is given by (cf. \cite{CFM}, Theorem~2)
$2^{{\theta_q^2}/{2} + \theta_q\log_2 \theta_q +O(\theta_q)}$. By combining the above estimates (cf. also Lemma~\ref{fin-vol}), we
conclude that
%
\begin{equation}
\label{eq:East} \trel({\mathbb Z};q)=2^{{\theta_q^2}/{2} + \theta_q\log_2
\theta_q
+O(\theta_q)}.
\end{equation}
%
\begin{maintheorem}
\label{th:main1}
As $q\downarrow0$
\[
\trel \bigl({\mathbb Z}^d;q \bigr) =2^{ ({\theta^2_q}/{(2d)})(1+o(1))}.
\]
In particular
\[
\trel \bigl({\mathbb Z}^d;q \bigr)= \trel({\mathbb
Z};q)^{({1}/d) (1+o(1))}.
\]
\end{maintheorem}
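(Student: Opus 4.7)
The plan is to establish matching upper and lower bounds of the form $\trel(\bbZ^d;q) = 2^{\theta_q^2/(2d)(1+o(1))}$ by exploiting the natural length scale $L_{*} = 2^{\theta_q/d}$, chosen so that $(L_{*})^d\, q = 1$, i.e.\ a box of side $L_{*}$ typically contains a handful of vacancies. The strategy is to reduce both bounds to a sharp analysis of the East-like process on boxes of side of order $L_{*}$ and then to transfer them back to $\bbZ^d$ via the easy monotonicity $\trel(\bbZ^d;q)\le \trel^{\max}(L;q)$ for $L$ large enough (cf.\ the argument leading to \eqref{eq:1}) together with a matching lower bound obtained by restriction.

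For the upper bound I would run a renormalization/block-dynamics argument, iterated over the $d$ coordinate directions. Fix a large box $\L$, partition it into slabs of thickness $\ell$ in the $e_1$-direction, and introduce an auxiliary block dynamics whose state is the content of each slab and in which a slab is refreshed only when its $e_1$-predecessor contains at least one vacancy. This block dynamics mimics a 1D East process whose relaxation time is controlled by the CFM bisection method, and its variational analysis reduces the estimate of $\trel^{\max}(L;q)$ to the product of (i) a comparable quantity for the block dynamics on a $(d-1)$-dimensional slab and (ii) the conditional relaxation inside a single slab given a good boundary. Iterating through the remaining $d-1$ directions reduces to blocks of side $O(1)$ whose relaxation is $O(1/q)$. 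At each bisection step one pays a factor $q^{-1}$ for the helping vacancy, but because the bisection direction may be chosen adaptively the energetic cost of $\log_2 L_{*}$ bisections is shared across $d$ directions; summing the resulting $\theta_q/d$ effective bisections per direction yields the exponent $\theta_q^2/(2d)$.

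For the lower bound I would use the capacity/bottleneck method: exhibit a set $A\subset \O_\L$ with $\pi_\L(A)\in[1/4,3/4]$ such that the $\cD^{\max}_\L$-cost of transitions between $A$ and $A^c$ is at most $2^{-\theta_q^2/(2d)(1+o(1))}$; by the Dirichlet characterization \eqref{eq:gap} this forces $T^{\max}_{\rm rel}(\L;q)\ge 2^{\theta_q^2/(2d)(1+o(1))}$, which transfers to $\bbZ^d$ by restriction. The set $A$ will be defined algorithmically: starting from the boundary vacancy, one iteratively reveals the spins of $\L$ in a canonical East-like invasion order and puts $\eta\in A$ when the number of vacancies revealed during the scan stays below a threshold of order $\theta_q/d$. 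Any flip in or out of $A$ then necessitates a connected cooperative cluster of $\sim \theta_q/d$ vacancies whose probability under $\pi_\L$ is roughly $q^{\theta_q/d}=2^{-\theta_q^2/d}$, tempered by a combinatorial entropy counting the $d$-dimensional East-like admissible clusters of that size; the multinomial-type estimate on these clusters produces exactly the compensating factor $2^{\theta_q^2/(2d)(1+o(1))}$.

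The hardest step is the combinatorial entropy bookkeeping in the lower bound: in $d$ dimensions the number of cooperative vacancy-propagation paths is much larger than for $d=1$, so any bottleneck based on a single coordinate direction would yield only the 1D exponent $\theta_q^2/2$ rather than $\theta_q^2/(2d)$. The algorithmic construction of $A$ must therefore monitor all $d$ propagation directions simultaneously and be sharp to leading order in the exponent; this is the place where the multi-scale analysis and the algorithmic construction alluded to in the introduction are indispensable. A secondary technical point is the calibration of the box side (slightly above $L_{*}$) and the verification that the maximal boundary condition is the correct reference for capturing the infinite-volume asymptotics.
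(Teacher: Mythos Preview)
Your high-level plan (critical scale $L_*=2^{\theta_q/d}$, renormalization/block dynamics for the upper bound, a bottleneck for the lower bound) matches the paper's, but the concrete implementation has gaps in both directions.

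\textbf{A preliminary slip.} The monotonicity you invoke goes the other way: $\trel^{\max}(L;q)\le \trel(\bbZ^d;q)$ (this is \eqref{eq:myrta}), so a finite-box bound with maximal boundary conditions cannot by itself upper bound $\trel(\bbZ^d;q)$. The paper works directly on $\bbZ^d$ for the upper bound (Lemma \ref{blok-versus-ss}) and uses \eqref{eq:myrta} only for the lower bound.

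\textbf{Upper bound.} The paper does \emph{not} slice into slabs and iterate over coordinate directions. It takes \emph{cubic} blocks of side $\ell$ and proves (via a ``Knight chain'' product structure and Proposition \ref{prop-block}) the functional inequality
\[
\trel(\bbZ^d;q)\le \kappa_d\,\trel^{\rm min}(3\ell;q)\,\trel(\bbZ^d;q^*),\qquad q^*=1-(1-q)^{\ell^d},
\]
so that each step renormalizes the \emph{density} rather than the dimension. Bounding $\trel^{\rm min}$ by the one-dimensional East estimate (via the spanning-tree comparison of Section \ref{sec:tree}) and iterating yields a dynamical system in the exponent coefficient that converges quadratically from $1$ to $1/d$. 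Your slab scheme, with a single direction processed at a time, pays the cost of relaxing inside a slab of size $1\times L_*^{d-1}$ under minimal boundary conditions; that cost is already $2^{(\theta_q/d)\theta_q-\frac12(\theta_q/d)^2(1+o(1))}=2^{\theta_q^2(2d-1)/(2d^2)(1+o(1))}$, which combined with the $1$D block step gives $2^{\theta_q^2/d(1+o(1))}$, a factor of $2$ off in the exponent. The claim that ``the energetic cost is shared across $d$ directions'' is precisely what needs a mechanism, and the paper's mechanism is the $q\mapsto q^*$ recursion through $d$-dimensional blocks, not dimension reduction.

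\textbf{Lower bound.} A threshold on the number of vacancies revealed by an invasion scan does not produce the bottleneck you want. On $\L=[1,L_*]^d$ one has $q|\L|\asymp 1$, so the configurations on the boundary of $\{\text{at most }k\text{ vacancies}\}$ have $\pi$-mass of order $\binom{|\L|}{k}q^k\asymp 1/k!$; for $k\asymp \theta_q/d$ this gives only $\trel\gtrsim k!=2^{O(\theta_q\log\theta_q)}$, far short of $2^{\theta_q^2/(2d)}$. The paper's bottleneck $A_*$ (Section \ref{bottiglia}) is not a count threshold: it is defined via a deterministic ``gap-removal'' dynamics, and $\eta\in A_*$ iff this dynamics terminates at the configuration $\mathds 10$ with a single vacancy at $v^*$. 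The structural Lemmas \ref{lem:prelim}--\ref{svegliadalle3} show that any $\eta\in\partial A_*$ carries $n{+}1$ vacancies at positions constrained to lie in a set of size at most $2^{O(n)}\cdot 2^{d\binom{n}{2}}/(n!\,d^n)$; it is this \emph{geometric} restriction on where the vacancies can sit (not merely how many there are) that turns $q^{n}$ into $2^{-n\theta_q+d\binom{n}{2}+O(\theta_q)}$ and yields the $1/d$ exponent at $n=\theta_q/d$. Your proposal identifies that the entropy bookkeeping is the crux, but the construction you sketch does not supply it.
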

%
\begin{remark}\label{controllo_errori}
The above divergence of the relaxation time as $q\downarrow0$
confirms the indications coming from numerical simulation
(\cite{Garrahan}, Figure~3 in Section~9). Our proof will also show that the $o(1)$
correction is $\O (\frac{1}{\theta_q}\log_2\theta_q )$ and
$O(\theta_q^{-1/2})$.\footnote{Recall that
$f=O(g)$, $f=o(1)$ and $f=\O(g)$ mean that $|f|\le C |g|$ for
some constant $C$, $f\to0$ and $\limsup|f|/|g| >0$, respectively.}
\end{remark}
The second result analyzes the relaxation time in a finite box. The
main outcome here is a dramatic dependence on the boundary conditions
in dimension greater than one.
%
\begin{maintheorem}
\label{th:main2}
1. Let $\L=[1,L]^d$ with $L\in(2^{n-1},2^n]$ and $n=n(q)$ such
that $\lim_{q\downarrow0}n(q)=+\infty$. Then, as $q\downarrow0$,
%
\begin{eqnarray}
\label{eq:2} \trel^\mathrm{ max}(L;q)&=& \cases{ 2^{ (n\theta_q -
 d{n\choose2} )(1+o(1))}, &\quad $
\mbox{for $n\le \theta_q/d$}$,\vspace*{2pt}
\cr
2^{ ({\theta^2_q}/{(2d)})(1+o(1))},& \quad$
\mbox{otherwise}$, }
\\
\label{eq:3}\trel^\mathrm{ min}(L;q)&=& \cases{ 2^{n\theta_q - {n\choose2}
 +n\log_2 n +O(\theta_q)},&\quad$\mbox{for $n \le\theta_q$}$,\vspace*{2pt}
\cr
2^{ {\theta^2_q}/{2}+\theta_q\log_2 \theta_q +O(\theta
_q)},&\quad $\mbox {otherwise,}$}
\end{eqnarray}
where the constant entering in $O(\theta_q)$ in \eqref{eq:3} does not
depend on the choice of $n=n(q)$.\vspace*{-3pt}
\begin{longlist}[2.]
\item[2.] Fix $n\in\bbN$ and let $\L=[1,L]^d$ with $\|\L\|_1 +1 \in
(2^{n-1},2^n]$. Then, as $q\downarrow0$,
%
\begin{equation}
\label{eq:333} \trel^\mathrm{ min}(L;q)= 2^{n\theta_q +O_n(1)},
\end{equation}
where $O_n(1)$ means that the
constant may depend on $n$.
\end{longlist}
\end{maintheorem}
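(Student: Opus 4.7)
The proof naturally splits along two axes: upper vs.\ lower bound, and maximal vs.\ minimal boundary conditions, with Part (2) handled as a bounded-diameter variant. Write $T^\a(n):=\trel^\a([1,2^n]^d;q)$ for $\a\in\{\max,\min\}$. My plan combines a multi-scale block-dynamics recursion (for upper bounds) with an explicit Dirichlet-form bottleneck (for lower bounds), both tracking the same energy--entropy balance as in Theorem~\ref{th:main1}.

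\textbf{Upper bounds via a scale recursion.} I would establish an inequality of the form $T^\a(n+1)\le R^\a(n,q)\cdot T^\a(n)$, obtained by partitioning $[1,2^{n+1}]^d$ into $2^d$ sub-boxes of side $2^n$ and introducing a block dynamics whose moves resample an entire sub-box conditionally on having a vacancy available on its East-like boundary. Comparison via the variational principle \eqref{eq:gap} gives $T^\a(n+1)\le T^{\rm block}_\a(n+1)\cdot T^\a(n)$. For max BC, a vacancy is available from each of the $d$ sub-boxes adjacent to a given internal corner, and the number of possible helper positions inside a sub-box is roughly $2^{d(n-1)}$, giving $T^{\rm block}_{\max}(n+1)\le C\, q^{-1} 2^{-d(n-1)}$; iterating produces the exponent $n\theta_q-d\binom{n}{2}$ claimed in \eqref{eq:2}. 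For min BC only one direction is available, the recursion yields $R^{\min}(n,q)\approx q^{-1} 2^{-(n-1)}$, and an additional refinement along the lines of \cite{CFM} (tracking the combinatorial weight of the hierarchical decomposition of a single-vacancy trail) recovers the $n\log_2 n$ correction in \eqref{eq:3}. Once the scale $n$ exceeds $\theta_q/d$ (resp.\ $\theta_q$), no further savings are available: the recursion saturates at the infinite-volume value from Theorem~\ref{th:main1}, which I would extract via Lemma~\ref{fin-vol}.

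\textbf{Lower bounds via capacity bottlenecks.} For each case I would exhibit a set $A\subset\O_\L$ with $\pi_\L(A)$ bounded away from $0$ and $1$, such that the capacity $\mathrm{cap}(A,A^c)$ is small; by the standard relation $\trel\ge \pi_\L(A)(1-\pi_\L(A))/\mathrm{cap}(A,A^c)$ this yields the bound. For max BC I would take $A$ to be the event that a carefully chosen ``diagonal slab'' near the NE corner contains no vacancy; the minimum number of simultaneous vacancies needed to leave $A$ is $n$, but the number of geometrically admissible configurations realizing the escape grows like $\prod_{k=1}^n d\cdot 2^{(n-k)d}$, producing exactly the discount $d\binom{n}{2}$. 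For min BC the bottleneck corresponds to the event that the hierarchical ``influence tree'' emanating from the unique boundary vacancy has not yet reached a given depth; here only one propagation direction is available at each level, and a more refined tree-counting argument (as in the lower bound in \cite{CFM}) supplies both the $-\binom{n}{2}$ and the $+n\log_2 n$ terms.

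\textbf{Part (2).} When $\|\L\|_1+1\in(2^{n-1},2^n]$ with $n$ fixed, the diameter remains bounded as $q\downarrow 0$, so the scale recursion collapses: to create a vacancy at the NE corner one must create a hierarchy of exactly $n$ vacancies along a fixed path from the unique boundary vacancy, at total energy cost $q^n=2^{-n\theta_q}$, and the number of such paths is a fixed combinatorial constant depending only on $n$. Upper and lower bounds of the form $2^{n\theta_q+O_n(1)}$ then follow from the same block-dynamics / capacity scheme, now with $n$-dependent (but $q$-independent) prefactors absorbed into $O_n(1)$.

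\textbf{Main obstacle.} The most delicate step is matching the entropy discount in the lower bound to the exact $d\binom{n}{2}$ (respectively $\binom{n}{2}-n\log_2 n$) appearing in the upper bound. Capacity estimates easily produce \emph{some} entropic savings, but isolating precisely the rate-limiting hierarchy requires pruning the admissible escape paths so that only the genuinely bottleneck vacancy creations are counted, and organising the pruning so that the combinatorial count matches the block-dynamics recursion level by level. A secondary difficulty is calibrating the error terms so that the max-BC bound is stated with $(1+o(1))$ while the min-BC bound can be sharpened to an explicit $O(\theta_q)$ correction uniformly in $n$.
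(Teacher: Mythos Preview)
Your overall architecture (block dynamics/renormalisation for upper bounds, bottleneck/capacity for lower bounds) matches the paper, but two of your concrete implementations have genuine gaps.

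\textbf{Max BC upper bound.} Your doubling recursion $T^{\max}(n{+}1)\le T^{\rm block}_{\max}(n{+}1)\cdot T^{\max}(n)$ does not hold as written. In the block-dynamics comparison the per-block cost is the time to resample a side-$2^n$ sub-box \emph{given only that some neighbouring sub-box contains a vacancy}. That vacancy sits at an unknown interior location and must first be transported to the common face; the resulting cost is a \emph{minimal}-BC quantity $T^{\min}$ on a box of comparable size (this is exactly the content of the paper's ``enlargement trick'', Lemma~\ref{lem:enlarge}). Since for $d\ge 2$ one has $T^{\min}(n)\asymp 2^{n\theta_q-\binom{n}{2}}\gg T^{\max}(n)\asymp 2^{n\theta_q-d\binom{n}{2}}$, plugging $T^{\min}$ into a pure doubling recursion does not reproduce the exponent $d\binom{n}{2}$. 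The paper circumvents this by allowing a \emph{general} block scale $\ell=2^m$, obtaining (Lemma~\ref{blok-versus-ss-bis})
\[
T^{\max}(2^n;q)\;\le\;\kappa_d\,T^{\min}(3\cdot 2^m;q)\,T^{\max}(2^{n-m};q^*),\qquad q^*=1-(1-q)^{\ell^d},
\]
and then iterating this as a dynamical system on the running exponent $\lambda$ in the ansatz $T^{\max}(2^n;q)\le 2^{n\theta_q-\lambda n^2/2+\cdots}$, with an optimised choice of $m$ at each step driving $\lambda\to d$. A single-step doubling cannot see this.

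\textbf{Max BC lower bound.} Your ``diagonal slab contains no vacancy'' set will not produce the entropy factor $2^{d\binom{n}{2}}/n!$ you need: the boundary $\partial A$ of such a set is far too large, because exiting $A$ only requires creating \emph{one} vacancy in the slab, and the constraint for that does not force a full $n$-level hierarchy. The paper's bottleneck $A_*$ (Section~\ref{bottiglia}) is defined by a deterministic vacancy-removal dynamics and the bound on $\pi(\partial A_*)$ comes from an algorithm extracting $n{+}1$ distinguished vacancies whose positions satisfy the gap constraints $\gamma_{i+1}\le \sum_{j\le i}\gamma_j$; the count $\prod d\cdot 2^{(n-k)d}$ you write down is precisely what this algorithmic analysis justifies. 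This construction is the main technical content of the section and is not reducible to a slab event.

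\textbf{Min BC and Part~(2).} Here the paper takes a simpler route than you outline: it does not run a scale recursion at all. The upper bound is obtained by embedding a directed spanning tree in $\Lambda$ and comparing with the East process on its longest branch (length $\|\Lambda\|_1+1$), then quoting \cite{CFM}. The lower bound is monotonicity down to a single coordinate line, again reducing to the $1$D result. For Part~(2) the lower bound uses a direct extension of the Chung--Diaconis--Graham combinatorial lemma (Lemma~\ref{lemma:CDG}) via projection onto $\ell_1$-level sets, which pins the minimal number of simultaneous vacancies at $n$; your description of this piece is essentially correct.
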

%
\begin{remark}
Notice that $L_c=2^{\theta_q/d}$ is the characteristic intervacancy
distance at equilibrium (the average number of
vacancies in a box of side $L_c$ is one). It coincides with the
characteristic length above which the relaxation time with maximal
boundary conditions starts to scale with $q$ like the infinite volume
relaxation time.

With minimal boundary conditions the
relaxation time behaves as in the one-dimensional case
(\cite{CFM}, Theorem~2). In particular, the critical scale $2^{\theta
_q}=1/q$ is the
equilibrium inter-vacancy distance in $d=1$. For what concerns
\eqref{eq:333}, we observe that $\|\L\|_1 +1$ is the number of vertices
in any (East-like) oriented path\footnote{That is, a path in
the oriented graph $\vec{\mathbb Z}^d$ obtained by orienting each edge
of the
graph ${\mathbb Z}^d$ in the direction of increasing
coordinate-value.}connecting $x_*=(1,\ldots,1)$ to
$v^*=(L,\ldots,L)$. With this interpretation the leading term in the
RHS of \eqref{eq:333} coincides with the leading term of the
relaxation time for an East process on such an oriented path
(cf., e.g., \cite{East-Rassegna}).

Finally, let $\tmix^{\s}(L;q)$ be the \emph{mixing time} of the
East-like process
with boundary conditions $\s$, that is, the smallest time $t$ such that,
for all starting configurations, the law at time $t$ has total
variation distance from $\pi_\L$ at most $1/4$ (cf., e.g., \cite{Levin2008}).
It is well known (see, e.g., \cite{Saloff}) that
\[
\trel^\s(L;q)\le\tmix^\s(L;q)\le\trel^\s(L;q)
\bigl(1-\tfrac
{1}{2} \log \pi^* \bigr),
\]
where $\pi^*:=\min_\eta\pi_\L(\eta)=q^{|\L|}$. Thus $\tmix^\mathrm{
max}(L;q)$ and $\tmix^\mathrm{ min}(L;q)$ satisfy
the first bound in \eqref{eq:2} and \eqref{eq:3}, respectively.
\end{remark}

%
\begin{remark}
The error term $o(1)$ in \eqref{eq:2} can be somewhat detailed [cf.
Remark~\ref{patria} and estimate \eqref{tricolore} in Section~\ref{sec:6}].
\end{remark}
%
In order to state the last result, we need to introduce a new time
scale. For any $x\in{\mathbb Z}_+^d$, let $\t_x$ be the hitting time
of the set
$\{\eta \dvtx  \eta_{x}=0\}$ for the East-like process in
${\mathbb Z}^d_+$ with some ergodic boundary condition $\s$ and let
$T^\s(x;q):=\bbE^{\s}_{\mathds1}(\t_x)$ be its mean when the starting
configuration has no vacancies (here and in the sequel denoted by
$\mathds1$).
For
simplicity, we present our result on the asymptotics of $T^\s(x;q)$ as
$q\downarrow0$ only for minimal boundary conditions [e.g.,
corresponding to a single vacancy at $(1,0,\ldots,0)$] since they
correspond to
the most interesting setting from the physical point of
view. In this case the mean hitting times $T^\mathrm{ min}(x;q)$ give some
insight on how a wave of vacancies originating from a single one
spreads in space--time.
Other boundary conditions could be treated as well. Moreover, we
restrict ourselves only to two main directions for the vertex $x$:
either the diagonal (i.e., $45^\circ$ degrees in $d=2$) or along
one of the
coordinate axes.
%
\begin{maintheorem}
\label{th:main3}
1. Let $v_*=(L,1,\ldots,1), v^*=(L,L,\ldots,L)$ with $L\in
(2^{n-1},\break 2^n]$ and $n=n(q)$ with $\lim_{q\downarrow0}n(q)=+\infty$.
Then, as $q\downarrow0$,
%
\begin{equation}
\label{hitmin} T^\mathrm{ min}(v_*;q)= 2^{n\theta_q - {n\choose2} +n\log_2
n +O(\theta_q)} \qquad\mbox{for } n
\le \theta_q, 
\end{equation}
whereas for the vertex $v^*$ the mean
hitting time satisfies
%
\begin{equation}
\label{hitmax} T^\mathrm{ min} \bigl(v^*;q \bigr)= 2^{n\theta_q - d{n\choose2} +O(\theta_q\log
\theta_q)},
\end{equation}
for all $ n \le
\theta_q/d $.
\begin{longlist}[2.]
\item[2.] Fix $n\in\bbN$ and let $x\in{\mathbb Z}_+^d$ be such that
$\|x-x_*\|_1+1\in[2^{n-1},2^n)$ where $x_*=(1,\ldots,1)$. Then, as
$q\downarrow0$,
%
\begin{equation}
\label{hitmin2} T^\mathrm{ min}(x;q)= 2^{n\theta_q +O_n(1)}.
\end{equation}
\end{longlist}
\end{maintheorem}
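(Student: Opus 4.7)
The plan is to prove matching upper and lower bounds on $T^{\rm min}(x;q)$ in each of the three regimes by combining the finite-volume relaxation-time estimates of Theorem~\ref{th:main2} (used as black-box input) with capacitary bottleneck arguments and explicit algorithmic constructions. I treat the axis vertex $v_*$, the diagonal vertex $v^*$, and the fixed-$n$ case separately.

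I would begin with $v_*=(L,1,\dots,1)$. For the upper bound, note that if all sites outside the thin tube $\Lambda_1:=[1,L]\times\{1\}^{d-1}$ are treated as frozen particles, then the East-like constraint at each interior site of $\Lambda_1$ collapses to the one-dimensional East constraint ``the predecessor in direction $e_1$ must be vacant'', since no other $y-e_i$ can supply a vacancy. This defines an auxiliary 1D East chain whose hitting time of $v_*$ starting from the initial boundary vacancy is known from the one-dimensional analysis (cf.\ \cite{CFM}*{Thm.~2} and \eqref{eq:3} with $d=1$) to be $2^{n\theta_q-\binom{n}{2}+n\log_2 n+O(\theta_q)}$; a standard graphical-coupling argument with a common Poisson clock/coin field then upgrades this to an upper bound for the full $d$-dimensional hitting time $T^{\rm min}(v_*;q)$. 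For the matching lower bound I would use a capacitary bottleneck: any trajectory from $\mathbf 1$ to $\{\eta_{v_*}=0\}$ must pass through configurations carrying at least $n=\lceil\log_2 L\rceil$ simultaneous helper vacancies placed on hierarchical scales along the oriented segment from the initial vacancy to $v_*$ (the classical obstruction for the 1D East process); the total stationary mass of such bottleneck configurations is at most $q^n\cdot 2^{\binom{n}{2}+n\log_2 n}$, which converts into the claimed lower bound via the standard capacity/hitting-time inequality for reversible chains.

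For the diagonal vertex $v^*=(L,\dots,L)$, which is the technical heart of the theorem, the naive bound $T^{\rm min}(v^*;q)\le T_{\rm rel}^{\rm min}([1,L]^d;q)/q$ is too weak: by \eqref{eq:3} it only recovers the axis-case exponent. The key idea is that we need not equilibrate the entire box, only to install a single vacancy at $v^*$, and this can be done by a \emph{two-phase} protocol. Phase~1 uses 1D East sub-dynamics along some of the coordinate rays through $x_*$ to propagate a handful of vacancies up to an intermediate scale, effectively converting the minimal boundary datum into an approximation of a maximal boundary datum on an inner sub-box. Phase~2 then invokes the cooperative $d$-dimensional mechanism underlying the maximal-BC bound \eqref{eq:2} on this inner sub-box to reach $v^*$, at cost $2^{n\theta_q-d\binom{n}{2}(1+o(1))}$. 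Iterating the two-phase construction along a dyadic cascade of intermediate scales (in the spirit of the renormalization-group argument of Theorem~\ref{th:main1}) optimizes the total cost to $2^{n\theta_q-d\binom{n}{2}+O(\theta_q\log\theta_q)}$; the entropic gain $d\binom{n}{2}$ reflects the $d$-fold multiplication of oriented paths available to the cooperative phase. The matching lower bound is again a capacity/bottleneck argument: any route to $\{\eta_{v^*}=0\}$ must traverse a configuration with at least $n$ cooperatively placed vacancies in $[1,L]^d$, and the number of admissible such configurations is $\le 2^{d\binom{n}{2}+O(n\log n)}$, yielding a capacity $\le q^n\cdot 2^{d\binom{n}{2}+O(n\log n)}$ with $n\log n=O(\theta_q\log\theta_q)$.

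Part~(2), for $n$ fixed as $q\downarrow 0$, is considerably easier: both entropic contributions collapse into $O_n(1)$ corrections, so the upper bound follows from a single fixed canonical protocol using exactly $n$ helper vacancies (completing in time $q^{-n}c(n)$) and the lower bound follows from the same bottleneck with capacity $\le c'(n)q^n$. The step I expect to be the main obstacle is the upper bound for $v^*$ in part~(1): making the two-phase intuition rigorous requires a careful inductive construction that simultaneously propagates vacancies along the coordinate axes and activates the $d$-dimensional cooperative mechanism, while tracking the accumulated error across $O(n)$ cascade levels. This is precisely where the logarithmic correction $\theta_q\log\theta_q$ appears (absent in the maximal-BC estimate \eqref{eq:2}), and its control demands a careful accounting of the cost of ``boundary condition conversion'' at each scale, analogous to but strictly more delicate than the construction behind Theorem~\ref{th:main1}.
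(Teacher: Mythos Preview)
Your treatment of $v_*$ and of part (2) is essentially in line with the paper, though for $v_*$ you work harder than needed: because the East-like constraint is oriented, the projection of the process on $\bbZ_+^d$ with minimal boundary conditions onto the line $\{(k,1,\dots,1):k\ge 1\}$ \emph{is} the one-dimensional East process (cf.\ Section~\ref{sec:graph}), so both bounds in \eqref{hitmin} follow at once from the known $d=1$ result---no separate coupling/bottleneck arguments are required. Your lower bound for $v^*$ is also the right idea; the paper implements it via the explicit bottleneck $A_*$ of Theorem~\ref{pupi} and the capacity bound of Lemma~\ref{lem:capbnds}.

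The real divergence is in the upper bound for $v^*$, and here your two-phase scheme has a genuine gap. First, ``converting'' a minimal boundary condition into an approximate maximal one on an inner box would mean planting $\Theta(L^{d-1})$ simultaneous vacancies along its East-like boundary, at prohibitive energetic cost; there is no cheap way to do this. Second, even granting access to maximal boundary conditions, invoking \eqref{eq:2} would only give an error $o(1)\cdot n\theta_q$ in the exponent, which by Remark~\ref{patria} is $O(n\theta_q^{1/2})$; for $n\sim\theta_q/d$ this is $O(\theta_q^{3/2})$, strictly worse than the claimed $O(\theta_q\log\theta_q)$. The paper does \emph{not} pass through Theorem~\ref{th:main2} here at all. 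Instead it bounds the resistance $R(x)=R^{\rm min}_{\mathds 1,B_x}$ directly via Thomson's principle, building a unit flow recursively: to reach $x$ one first flows to a single vacancy at a midpoint $y\approx x/2$ (using the equilibrium flow on $\L_y$), then reverses that flow to clear all auxiliary vacancies except the one at $y$, then from the configuration $0_y$ reaches $x$ by a translated copy of the same construction on $\widetilde\L_y$. Averaging this composite flow over the $\sim(2^m/N)^d$ admissible midpoints $y$ and applying Cauchy--Schwarz yields (Lemma~\ref{claim:main} and Lemma~\ref{lem:res}) the recursion $R_m\le 27\,N^d q^{-1}2^{-dm}R_{m-1}$. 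Iterating over $\sim n$ dyadic scales with $N\sim n$ produces the exponent $n\theta_q-d\binom{n}{2}+O(\theta_q\log\theta_q)$, the logarithmic loss coming from the seed scale $m_0\sim\log_2 n$ and the $N^d$ factor per step. The $d$-fold entropic gain therefore enters not through any boundary-condition conversion but through the averaging of the flow over $\sim 2^{dm}$ midpoints at each scale---this is the mechanism your proposal is missing.
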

%
\begin{remark}
\label{pasqua}
Actually, we shall prove that \eqref{hitmax} holds for any
ergodic boundary conditions on $\partial_E {\mathbb Z}_+^d$ and not
just for
the minimal ones.
\end{remark}
%

\begin{figure}[b]
\centering
\begin{tabular}{@{}cc@{}}

\includegraphics{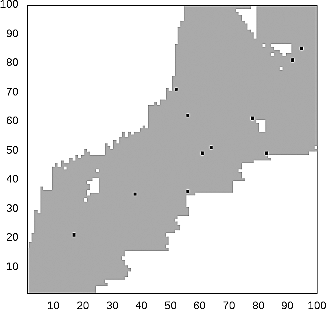}
 & \includegraphics{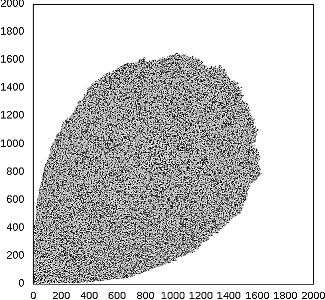}\\
\footnotesize{(a)} & \footnotesize{(b)}
\end{tabular}
\caption{A snapshot of a simulation of the East-like process with
minimal boundary conditions
and initial condition constantly identically equal to $1$. White dots
are vertices that have never been updated, grey dots correspond to
vertices that have been updated at
least once and the black dots are the vacancies present in the
snapshot. \textup{(a)} $q=0.002, t = 3 \times10 ^ {12}$; \textup{(b)} $q=0.25, t=9 \times10 ^ {3}$.}
\label{fig:1bis}
\end{figure}

The above result highlights a somewhat unexpected directional behavior
of the
East-like process (cf. Figure~\ref{fig:1bis}). Take for simplicity
minimal boundary conditions, $d=2$ and
$n=\theta_q/2$ so that $L=2^n$ is the mean intervacancy distance $L_c$
at equilibrium. Despite of the fact that the $\ell_1$ distance from
the origin of $v^*$ is roughly twice that of $v_*$,
implying that the process has to create more vacancies out of $\mathds
1$ in order to reach $v^*$ compared to those needed to reach $v_*$,
the mean hitting time for $v_*$ is much
larger (as $q\downarrow0$ and on a logarithmic scale) than the mean
hitting time for $v^*$. The main reason for such a surprising
behavior is the fact that $v^*$ is connected to the single vacancy of
the boundary condition by
an exponentially large (in $\|x\|_1$) number of (East-like) oriented
paths while $v_*$ is connected by only one such path. When, for
example, $n\propto\theta_q$ this entropic effects can
compensate the increase in energy caused by the need to use more
vacancies. The phenomenon could disappear for values of
$n=O(\sqrt{\theta_q\log\theta_q})$ for which the term $n\choose2$
becomes comparable
to the error term $O(\theta_q\log\theta_q)$. It certainly does so for
$n=O(1)$ as shown
in \eqref{hitmin2}.
%
\begin{remark}
One may wonder what is the behavior of the mean hitting time $T^\mathrm{ min}(x;q)$
when $q$ is fixed and $\|x\|_1\to\infty$. If $x$ belongs to, for
example, the
half-line $\{x\in{\mathbb Z}^d_+ \dvtx x_i=1\ \forall i\ge2\}$ and
since the
projection on this line of the East-like process with minimal boundary
conditions
is the standard East process, one can conclude
(cf. \cite{Blondel,East-cutoff}) that $\lim_{\|x\|_1\to\infty
}T^\mathrm{
min}(x;q)/\|x\|_1$ exists. Simulations suggest [cf. Figure~\ref{fig:1bis}(b)] that the same occurs for points $x$ belonging to
suitable rays through
the origin
but that in this case the limit is smaller than the one obtained along
the coordinate axes. Moreover, it seems natural to conjecture that the random
set $\cS_t$ consisting of all points of ${\mathbb Z}_+^d$ that have been
updated at least once before time $t$,
after rescaling by $t$ satisfies a shape theorem.
\end{remark}


\section{Some preliminary tools}
\label{troppo}
In this section, we collect some technical tools to guarantee a smoother
flow of the proof of the main results.
\subsection{Monotonicity} It is clear from the variational
characterization of the spectral gap that any monotonicity of the Dirichlet
form of the East-like (e.g., in the boundary conditions, in the
volume or in the constraints) induces a similar monotonicity of the
spectral gap and, therefore, of the
relaxation time. In what follows, we collect few simple useful
inequalities.
%
\begin{lemma}\label{lem:ineq}
Let $\L=\prod_{i=1}^d[a_i,b_i]$ and let
$\L'=\prod_{i=1}^d[a_i,b'_i]$, with $b'_i\ge b_i\ \forall i$. Fix two
ergodic boundary conditions $\s,\s'$ for $\L,\L'$, respectively, such
that $\s_x\le\s'_x$ for all $x\in\partial_E\L$. Then
%
\begin{equation}
\label{eq:degrado} \trel^\s(\L;q)\le\trel^{\s'} \bigl(
\L';q \bigr).
\end{equation}
In particular, $\trel^\mathrm{ max}(L;q)$ and $\trel^\mathrm{ min}(L;q)$ are
nondecreasing function
of $L$. Moreover,
%
\begin{eqnarray}
\label{eq:myrta2}\trel^\mathrm{ max}(\L;q)&\le&\trel^{\s}(\L;q)\le
\trel ^\mathrm{ min}(\L;q),
\\
\label{eq:myrta} \trel^\mathrm{ max}(\L;q)&\le&\trel \bigl({\mathbb
Z}^d;q \bigr).
\end{eqnarray}
\end{lemma}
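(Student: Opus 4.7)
The plan is to deduce all four inequalities from a single Dirichlet–form monotonicity, obtained by a test–function extension argument combined with the Rayleigh--Ritz variational principle \eqref{eq:gap}. The heuristic is that adding vacancies on the boundary, enlarging the volume, or passing to $\bbZ^d$ (where the ``boundary'' is effectively replaced by Bernoulli$(p)$ variables) can only relax the kinetic constraints, so the Dirichlet form grows relative to a fixed variance and hence the relaxation time shrinks.

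\textbf{Proof of \eqref{eq:degrado}.} Given any $f:\O_\L\to\bbR$ I would define $\tilde f:\O_{\L'}\to\bbR$ by $\tilde f(\eta):=f(\eta|_\L)$. Because $\pi_{\L'}=\pi_\L\otimes\pi_{\L'\setminus\L}$ one immediately has $\var_{\L'}(\tilde f)=\var_\L(f)$, $\var_y(\tilde f)=\var_y(f)$ for $y\in\L$, and $\var_y(\tilde f)=0$ for $y\in\L'\setminus\L$, so
\[
\cD_{\L'}^{\s'}(\tilde f)\;=\;\sum_{y\in\L}\pi_{\L'}\bigl(c_y^{\L',\s'}\,\var_y(f)\bigr).
\]
The key geometric observation is that, since $\L$ and $\L'$ share the same lower corner $a$, for every $y\in\L$ and every $e\in\cB$ the vertex $y-e$ belongs to $\L\cup(\partial_E\L\cap\partial_E\L')$ and never to $\L'\setminus\L$. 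Hence $c_y^{\L',\s'}(\eta)$ depends only on $\eta|_\L$ and on $\s'$ restricted to the common boundary, and under the assumption $\s\le\s'$ on $\partial_E\L$ the boundary condition $\s$ carries at least as many vacancies, giving the pointwise bound $c_y^{\L',\s'}(\eta)\le c_y^{\L,\s}(\eta|_\L)$. This yields $\cD_{\L'}^{\s'}(\tilde f)\le \cD_\L^{\s}(f)$ and then, through \eqref{eq:gap}, $\gap(\cL_{\L'}^{\s'})\le\gap(\cL_\L^{\s})$, which is exactly \eqref{eq:degrado}.

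\textbf{The three corollaries.} Monotonicity of $\trel^{\rm max}(L;q)$ and $\trel^{\rm min}(L;q)$ in $L$ is immediate from \eqref{eq:degrado} applied to cubes $[1,L]^d\subset[1,L']^d$ with matching boundary type (the value of $\trel^{\rm min}$ being independent of the particular axis $e\in\cB$ carrying the single boundary vacancy, by lattice symmetry). For the sandwich \eqref{eq:myrta2}, take $\L'=\L$: the maximal boundary $\s''\equiv 0$ trivially satisfies $\s''\le\s$ for every ergodic $\s$, giving $\trel^{\rm max}\le\trel^{\s}$; conversely, since $\s$ is ergodic it has a vacancy at some $a-e$, and placing the unique vacancy of a minimal $\s'$ at that same site produces $\s\le\s'$ and hence $\trel^{\s}\le\trel^{\rm min}$. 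Finally, \eqref{eq:myrta} is obtained by the same extension trick, now lifting $f$ to $\tilde f:\O\to\bbR$ with $\tilde f(\eta)=f(\eta|_\L)$. After reducing $\cD(\tilde f)$ to a sum over $y\in\L$ and integrating out the variables in $\bbZ^d\setminus\L$, one writes $\pi(c_y\var_y(f))=\pi_\L(\pi_{\L^c}(c_y)\var_y(f))$ and notes that $\pi_{\L^c}(c_y)\le 1=c_y^{\L,{\rm max}}(\eta|_\L)$ whenever $y$ has some neighbor in $\partial_E\L$, while $\pi_{\L^c}(c_y)=c_y=c_y^{\L,{\rm max}}$ when all neighbors $y-e$ lie in $\L$. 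In either case $\cD(\tilde f)\le\cD_\L^{\rm max}(f)$ and the variational principle yields \eqref{eq:myrta}.

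\textbf{Main obstacle.} The only real subtlety is the geometric claim that for $y\in\L$ every East-like neighbor $y-e$ stays outside $\L'\setminus\L$; it is this that allows the boundary comparison $\s\le\s'$ on $\partial_E\L$ to translate directly into a pointwise inequality on the constraints, and it relies on the one-sided (lower-corner) structure of $\cB$ together with the shared lower corner of $\L$ and $\L'$. Everything else is bookkeeping on top of the variational characterization \eqref{eq:gap} and the product structure of the reversible measure $\pi$.
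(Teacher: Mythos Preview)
Your proof is correct and follows essentially the same approach as the paper's: both argue via the Rayleigh--Ritz variational principle by extending test functions from $\L$ to $\L'$ (respectively to $\bbZ^d$) and comparing constraints pointwise. The paper's proof is much terser---it records only the key inequality $c_x^{\L,\s'}\le c_x^{\L,\s}$ and the identities $\var_\L(f)=\var_{\L'}(f)$, $\cD_\L^\s(f)=\cD_{\L'}^\s(f)$, then defers \eqref{eq:myrta} to \cite{CMRT}*{Lemma 2.11}---whereas you spell out explicitly the geometric fact that for $y\in\L$ the East-like neighbors $y-e$ never fall in $\L'\setminus\L$, and you carry out the averaging argument for \eqref{eq:myrta} in full; but the underlying mechanism is identical.
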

\begin{pf}
The inequality $c_x^{\L,\s'}\le c_x^{\L,\s}$ implies that
$\cD_\L^{\s'}(f)\le\cD_\L^{\s}(f)$. Moreover, for any function
$f\dvtx \O
_{\L}\mapsto\bbR$, it
holds that $\var_\L(f)=\var_{\L'}(f)$ and $\cD^{\s}_{\L}(f)=
\cD^{\s}_{\L'}(f)$. The first two statements \eqref{eq:degrado} and
\eqref{eq:myrta2} are immediate
consequences of the
variational characterization of the spectral gap. The last statement
follows by similar arguments (cf. \cite{CMRT}, Lemma~2.11).
\end{pf}
The second result establishes a useful link between the finite volume
relaxation time with maximal boundary conditions and the infinite
volume relaxation time.
%
\begin{lemma}
\label{fin-vol}
$\trel({\mathbb Z}^d;q)= \lim_{L\to
\infty}\trel^\mathrm{ max}(L;q)$.
\end{lemma}
\begin{pf}
Using \eqref{eq:myrta} together with the fact that $\trel^\mathrm{
max}(L;q)$ is nondecreasing in $L$, it is enough to show that
\[
\trel^\mathrm{ max} \bigl({\mathbb Z}^d;q \bigr)\le\sup
_L \trel^\mathrm{ max}(L;q).
\]
That indeed follows from \cite{CMRT}, proof of Proposition~2.13.
\end{pf}
%
%
\subsection{Graphical construction}
\label{sec:graph}
It is easily seen that the East-like process (in finite or infinite
volume) has the following graphical
representation (see, e.g., \cite{CMRT}). To
each $x\in{\mathbb Z}^d$, we associate a rate one Poisson process and,
independently, a family of independent $\operatorname{Bernoulli}(p)$ random variables
$\{s_{x,k} \dvtx k \in\N\}$. The occurrences of the Poisson process
associated to $x$ will be denoted by $\{t_{x,k} \dvtx k \in\N\}$. We
assume independence as $x$ varies in ${\mathbb Z}^d$. This fixes the
probability space whose probability law will be denoted by
$\bbP(\cdot)$. Expectation w.r.t. $\bbP(\cdot)$ will be denoted by
$\bbE
(\cdot)$. Notice that, $\bbP$-almost surely, all
the occurrences $\{t_{x,k} \dvtx k \in\N, x\in{\mathbb Z}^d\}$ are different.
On the above probability space we construct a Markov process according
to the following rules. At each time
$t_{x,k}$, the site $x$ queries the state of its own constraint $c_x$
(or $c_x^{\L,\s}$ in the finite volume case).
If and only if the constraint is satisfied ($c_x = 1$ or $c_x^{\L,\s
}=1$), then $t_{x,k}$
is called a \emph{legal ring} and the configuration resets its
value at site $x$ to the value of the corresponding Bernoulli variable
$s_{x,k}$. A simple consequence of the graphical construction is that
the projection on a finite box
$\L$ of the form $\L=\prod_{i=1}^d[1,L_i]$ of the East-like process on
${\mathbb Z}^d_+$ with boundary
condition $\s$ coincides with the
East-like process on $\L$ with boundary conditions given by the
restriction of $\s$ to $\partial_E\L$.
\subsection{A block dynamics version of the East-like process}
\label{block process}
Let $S$ be a finite set and let $\mu$ be a probability measure on
$S$. Let $G\subset S$ and define $q^*=1-p^*=\mu(G)$. Without loss of
generality, we assume that $q^*\in
(0,1)$. On
$\O^*=S^{{\mathbb Z}^d}$ consider the Markov process with generator
$\cA$ whose action on functions depending on finitely many
coordinates is given by [cf. \eqref{eq:generator}]
%
\begin{equation}
\label{eq:3bis} \cA f(\omega)=\sum_{x\in{\mathbb Z}^d}c^*_x(
\omega) \bigl[\mu _x(f)-f \bigr](\omega),
\end{equation}
where $\mu_x(f)(\omega)= \sum_{\omega_x\in S}\mu(\omega
_x)f(\omega)$ is the conditional
average on the coordinate $\omega_x$ given $\{\omega_y\}_{y\neq x}$ and
$c^*_x(\omega)$
is the indicator of the event that, for some $e\in\cB$, the
coordinate $\omega_{x-e}$ belongs to the subset $G$.
%
\begin{remark}
Exactly as for the East-like process there is a finite volume version
of the above process on a box $\L$ with an ergodic boundary condition
$\s\in
S^{\partial_E\L}$ and generator $\cA_\L^\s$. In particular, $\s$ is
maximal if $\s_x\in G$ for all $x\in\partial_E\L$, and in this case we
will write $\cA^\mathrm{ max}_\L$.
\end{remark}
If
$S=\{0,1\}$, $G=\{0\}$ and $\mu$ is the $\operatorname{Bernoulli}(p)$
measure on $S$,
the above process coincides with the East-like process. As for the
latter, one easily verifies reversibility
w.r.t. the product measure with marginals at each site $x$ given by
$\mu$. The above process also admits a
graphical construction tailored for the applications we have in
mind.

Similar to the East-like
process one associates to each $x\in{\mathbb Z}^d$ a rate one Poisson
process, a family of independent $\operatorname{Bernoulli}(p^*)$ random variables
$\{s_{x,k} \dvtx k \in\N\}$ and a family of independent random variables
$\{\omega_{x,k} \dvtx k \in\N\}\in S^{\bbN}$, such that $\omega_{x,k}$
has law
$\mu(\cdot\mid G^c)$ if $s_{x,k}=1$ and $\mu(\cdot\mid G)$
otherwise. All the above variables are independent as $x$ varies in
${\mathbb Z}^d$.
One then constructs a Markov process according to the following rules.
At each time
$t_{x,k}$, the site $x$ queries the state of its own constraint $c^*_x$.
If and only if the constraint is satisfied ($c^*_x = 1$), then the
configuration resets its
value at site $x$ to the value of the corresponding variable
$\omega_{x,k}$. The law of the process started from $\omega$ will be denoted
by $\bbP_\omega^*$.

The key result about the process with generator $\cA$ is the following.
%
\begin{proposition}
\label{prop-block}
Let $\gap(\cA)$ be the spectral gap of $\cA$ and recall that
$\gap(\cL;q^*)$ denotes the
spectral gap of the East-like process with parameter $q^*$. Then
\[
\gap(\cA)=\gap \bigl(\cL;q^* \bigr).
\]
\end{proposition}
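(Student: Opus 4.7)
The plan is to establish the two inequalities $\gap(\cA) \leq \gap(\cL;q^*)$ and $\gap(\cA) \geq \gap(\cL;q^*)$ separately, with the upper bound being an essentially free consequence of a projection identity and the lower bound being the heart of the matter. For the upper bound, I would introduce the coarsening map $\phi \colon \O^* \to \O$ defined by $\phi(\o)_x := \mathds{1}_{G^c}(\o_x)$. By construction the pushforward of $\mu^{\otimes \bbZ^d}$ under $\phi$ is the Bernoulli$(p^*)$ product measure $\pi$ and, by hypothesis on $c^*_x$, one has $c^*_x(\o) = c_x(\phi(\o))$. A direct computation using $\mu_x(f\circ\phi) = (\pi_x f)\circ\phi$ then gives
\[
\cA(f\circ\phi) = (\cL f)\circ \phi \qquad \text{for every } f \in L^2(\pi).
\]
Hence each eigenfunction of $\cL$ lifts to an eigenfunction of $\cA$ with the same eigenvalue, so $\mathrm{Spec}(\cL)\subseteq \mathrm{Spec}(\cA)$ and $\gap(\cA) \leq \gap(\cL;q^*)$.

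For the reverse inequality I would work with the orthogonal decomposition $L^2(\mu^{\otimes \bbZ^d}) = L^2(\pi) \oplus H_\perp$, where $H_\perp := \{F : \bar F \equiv 0\}$ and $\bar F(\eta) := \bE_\mu[F\mid \phi = \eta]$. Invariance of $L^2(\pi)$ under $\cA$ is the content of the intertwining above; invariance of $H_\perp$ then follows from the self-adjointness of $\cA$ on $L^2(\mu^{\otimes \bbZ^d})$. As a consequence both the variance and the Dirichlet form split along this decomposition,
\[
\var_\mu(F) = \var_\pi(\bar F) + \|F-\bar F\|^2, \qquad \cD^{\cA}(F) = \cD^{\cL}(\bar F) + \cD^{\cA}(F-\bar F),
\]
so by applying the Poincar\'e inequality for $\cL$ to the $L^2(\pi)$ component, the desired bound $\cD^{\cA}(F) \geq \gap(\cL;q^*)\var_\mu(F)$ reduces to proving
\[
\cD^{\cA}(G) \geq \gap(\cL;q^*)\, \|G\|^2 \qquad \text{for every } G \in H_\perp.
\]

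This last inequality is the main obstacle. The naive approach of conditioning on $\eta$ and tensorising $\|G\|^2 = \bE_\pi[\var_\mu(G\mid \eta)]$ does not work directly: the constraint indicator $c^*_x$ is present on the Dirichlet side but absent on the variance side, and a site-by-site comparison loses the required factor. My plan is to exploit the structure of $H_\perp$ more carefully by writing, at each site,
\[
\var^\mu_x(G) = p^* q^* \bigl(\tilde G_{1,x} - \tilde G_{0,x}\bigr)^2 + V_x^{\mathrm{wc}}(G), \qquad \tilde G_{i,x}(\o_{\neq x}) := \bE_\mu[G\mid \eta_x=i,\o_{\neq x}],
\]
and proceed in two steps. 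First, a Jensen step on the switch part, combined with the fact that $c^*_x$ depends only on $\eta$, shows that $\sum_x \bE_\mu[c^*_x (\tilde G_{1,x}-\tilde G_{0,x})^2]$ already dominates the $\cL$-Dirichlet form of a specific $\eta$-weight extracted from $G$. Second, the within-class residuals $V_x^{\mathrm{wc}}(G)$ must be handled by recycling the same decomposition recursively, treating each within-class excitation as a "passenger" riding on an East-like slow mode. The delicate point is that the induction has to be set up so that the constants telescope to exactly $\gap(\cL;q^*)^{-1}$ and no better, as demanded by the claimed equality; I expect the bulk of the technical work to lie in this telescoping.
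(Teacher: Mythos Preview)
Your upper bound via the intertwining $\cA(f\circ\phi)=(\cL f)\circ\phi$ is correct and matches the paper's projection argument. The orthogonal splitting $L^2(\mu^{\otimes\bbZ^d})=L^2(\pi)\oplus H_\perp$ is also valid, and you are right that both summands are $\cA$-invariant, so the problem genuinely reduces to the inequality $\cD^{\cA}(G)\ge\gap(\cL;q^*)\|G\|^2$ for $G\in H_\perp$.

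The gap is that you have not proved this inequality, and the two-step plan you sketch does not constitute an argument. In your ``first step'' you propose to control $\sum_x\bE_\mu[c^*_x(\tilde G_{1,x}-\tilde G_{0,x})^2]$ by the $\cL$-Dirichlet form of ``a specific $\eta$-weight extracted from $G$'', but for $G\in H_\perp$ one has $\bar G\equiv 0$ by definition, so there is no nonzero $\eta$-function to extract; moreover $\tilde G_{i,x}$ still depends on the full $\o_{\neq x}$, not just on $\eta_{\neq x}$, so even after Jensen you are not landing in the domain of $\cL$. The ``second step'' is a hope (a recursive passenger scheme with exact telescoping to $\gap(\cL;q^*)^{-1}$) rather than a mechanism, and you yourself flag it as the locus of the real work. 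As stated the proposal is incomplete at exactly the point that carries the content of the proposition.

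The paper bypasses the static $H_\perp$ estimate entirely by a dynamic argument in finite volume (with maximal boundary conditions, then sending $L\to\infty$). The key observation is that the legal-ring times $\tau_x$ are measurable with respect to the projected $\eta$-process, and that conditionally on $\{\max_{x\in\L}\tau_x<t\}$ and on $\eta(t)$, the variables $\o_x(t)$ are independent with law $\mu(\cdot\mid G)$ or $\mu(\cdot\mid G^c)$ according to $\eta_x(t)$. Hence for any $f$ with $\mu(f)=0$ the semigroup $e^{t\cA_\L^{\rm max}}f(\o)$ splits into a piece controlled by $e^{t\cL_\L^{\rm max}}g$ with $g(\eta)=\mu(f\mid\eta)$ and a remainder bounded by $\bbP(\max_x\tau_x\ge t)$, both of which decay at rate $\gap(\cL_\L^{\rm max};q^*)$. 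This replaces your missing $H_\perp$ Poincar\'e inequality by a coupling statement that is both transparent and sharp.
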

\begin{pf}
Given $\omega\in\O^*=S^{{\mathbb Z}^d}$ consider the new variables
$\eta
_x=0$ if
$\omega_x\in G$
and $\eta_x=1$ otherwise, $x\in{\mathbb Z}^d$. The projection process
on the
$\eta$
variables coincides with the East-like process at
density $p=1-q^*$ because the constraints depend on $\omega$
only through the $\eta$'s. Thus, $\gap(\cA)\le\gap(\cL;q^*)$. To
establish the
converse inequality, we notice that Lemma~\ref{fin-vol} applies as is to
$\cA$. Therefore, it is enough to show that, for any $L$,
$\gap(\cA^\mathrm{ max}_{\L_L})\ge\gap(\cL^\mathrm{ max}_{\L_L};q^*)$ where
$\L_L=[1,L]^d$.

For this purpose,
consider the East-like process in $\L_L$ with maximal boundary
conditions and let $\t_x$ be the first time that there is a legal ring
at the vertex $x\in\L_L$. Using Lemma~\ref{persistence},\hskip.2pt\footnote{Lemma~\ref{persistence} is stated and proved for the whole lattice
${\mathbb Z}^d$ at equilibrium, however, a similar proof applies to the
finite volume
setting at equilibrium for a fixed boundary condition. Furthermore
$\bbP_\h^{\L_L,\s}(\tau_x \geq t)\leq(p \wedge q)^{-L}\bbP_\pi
^{\L_L,\s
}(\tau_x \geq t)$.} we get that, for any $\eta\in\O_{\L_L}$ and any
$x\in\L_L $,
%
\begin{equation}
\label{eq:8} \liminf_{t\to\infty}-\frac{1}{t}\log
\bbP_\eta^{\L_L,\mathrm{ max}}(\t _x\ge t)\ge \gap \bigl(
\cL^\mathrm{ max}_{\L_L};q^* \bigr).
\end{equation}
Let $\O^*_{\L_L} = S^{\L_L}$, then for any $f\dvtx \O^*_{\L_L}\mapsto
\bbR$
with $\mu_{\L_L}(f)=0$ (where $\mu_{\L_L}$ denotes the product measure
on $\O_{\L_L}^*$ with marginal $\mu$ at each site) we now write
%
\begin{eqnarray}
\label{eq:lastday} e^{t\cA_{\L_L}^\mathrm{ max}}f(\omega)&=& \bbE^*_\omega \bigl(f \bigl(\omega
(t) \bigr)\bigr)
\nonumber
\\[-8pt]
\\[-8pt]
\nonumber
&=&\bbE^*_\omega \bigl(f \bigl(\omega(t) \bigr){\mathds
1}_{\{\max_x \t_x <t\}} \bigr) + \bbE^*_\omega \bigl(f \bigl(\omega (t)
\bigr){ \mathds 1}_{\{\max_x \t_x \ge t\}} \bigr),
\end{eqnarray}
where $\bbE^*_\omega(\cdot)$ denotes expectation w.r.t. the chain
generated by $\cA_{\L_L}^\mathrm{ max}$ starting at $t=0$ from
$\omega$. Notice that, for any $x\in\L_L$ and any $t>0$, the event
$\{\t_x\le t\}$ can be read off from the evolution of the projection
variables $\eta$. In particular,
\[
\bbP^*_\omega(\tau_x>t)=\bbP_{\eta(\omega)}^{\L_L,\mathrm{ max}}(
\tau _x >t).
\]
Fix $\varepsilon>0$. Using \eqref{eq:8}, the absolute value of second term
in the RHS of
\eqref{eq:lastday} is
bounded from above by $C\exp\{- t(
\gap(\cL^\mathrm{ max}_{\L_L};q^*)-\varepsilon)\}$ for some constant $C$
depending on $f$
and $L$.

To bound the first term in the
RHS of \eqref{eq:lastday} we observe that, conditionally on the variables
$\eta_x(t)={\mathds1}_{\omega_x\in G}(\omega(t))$ and on the event
$\{\max_{x\in\L_L} \t_x < t\}$, the
variables $\omega_x(t)$ are independent with law $\mu(\cdot\mid G)$ if
$\eta_x(t)=1$ and $\mu(\cdot\mid G^c)$ otherwise. Thus, with
$g(\eta):=\mu_{\L_L}(f\mid\eta)$,
\begin{eqnarray*}
\bbE^*_\omega \bigl(f \bigl(\omega(t) \bigr){\mathds1}_{\{\max_x \t_x <t\}
}
\bigr)&=& \bbE^*_\omega \bigl(g \bigl(\eta(t) \bigr){\mathds1}_{\{\max_x \t_x <t\}
}
\bigr)
\\
&=& \bbE_{\eta(\omega)}^{\L_L,\mathrm{ max}} \bigl(g \bigl(\eta(t) \bigr) \bigr) - \bbE
^*_\omega \bigl(g \bigl(\eta(t) \bigr){\mathds 1}_{\{\max_x \t_x \ge t\}}
\bigr).
\end{eqnarray*}
By construction $\pi(g)=0$, so that
\[
\max_\omega\bigl|\bbE_{\eta(\omega)}^{\L_L,\mathrm{ max}} \bigl(g \bigl(
\eta (t) \bigr) \bigr)\bigr|\le Ce^{-t
\gap(\cL^\mathrm{ max}_{\L_L};q^*)},
\]
and we may bound the term $\bbE^*_\omega (g(\eta(t)){\mathds
1}_{\{\max
_x \t_x \ge t\}} )$ similar to the second term in \eqref
{eq:lastday} using the claim \eqref{eq:8}.
In conclusion,
\[
\max_\omega\bigl| e^{t\cA_{\L_L}^\mathrm{ max}}f(\omega)\bigr|\le C'
e^{-t (
\gap(\cL^\mathrm{ max}_{\L_L};q^*)-\varepsilon)},
\]
so that, by the arbitrariness of $\varepsilon$, $\gap(\cA_{\L_L}^\mathrm{
max})\ge\gap(\cL_{\L_L}^\mathrm{ max};q^*)$.
\end{pf}
A concrete example of the process with generator $\cA$, which will play
a key role in
our proofs, goes as follows.
%
\begin{definition}[(The East-like block process)]
\label{Eblock}
Let
$\L_\ell=[1, \ell]^d$ be the cube of side $\ell$, let
$S= \{0,1\}^{\L_\ell}$, let $\mu=\pi_{\L_\ell}$ and let $G=\{\s
\in S\dvtx
\s_y=0$ for some $y\in\L_\ell\}$. Thus, $q^*=\mu(G)=
1-(1-q)^{\ell^d}$. Let us identify $\omega\in\O^*=S^{{\mathbb
Z}^d}$ with
$\eta\in
\O=\{0,1\}^{{\mathbb Z}^d}$ by setting $\omega_x=\eta_{\L_\ell(x)}$
where $\L_\ell
(x):=\L_\ell+ \ell x$.
Then the process with generator $\cA$
given by \eqref{eq:3bis} associated to the above choice of $\mu,S,G$
corresponds to the following Markov process for $\eta$, called
East-like block process:
the configuration in each block $\L_\ell(x)$, with rate one is replaced
by a fresh one
sampled from $\mu$, provided that, for some $e\in\cB$, the block
$\L_\ell(x-e)$ contains a vacancy.
\end{definition}
The above construction together combined with Proposition~\ref{prop-block}
suggests a possible route, reminiscent of the renormalization group
method in statistical physics, to bound the relaxation time $\trel
({\mathbb Z}
^d;q)$ of the
East-like process.

Using comparison methods for Markov chains \cite{Diaconis}, one may
hope to establish a
bound on $\trel({\mathbb Z}^d;q)$ of the form (cf. Lemma~\ref{blok-versus-ss})
\[
\trel \bigl({\mathbb Z}^d;q \bigr)\le f(q,\ell)\trel(
\cL_\mathrm{ block}),
\]
for some explicit function $f$ where $\cL_\mathrm{ block}$ is the
generator of the East-like block process. Using Proposition~\ref
{prop-block}, one
would then derive the functional inequality
\[
\trel \bigl({\mathbb Z}^d;q \bigr)\le f(q,\ell)\trel \bigl({\mathbb
Z}^d; 1-(1-q)^{\ell
^d} \bigr),
\]
where $\ell$ is a \emph{free} parameter. The final inequality obtained
after optimizing over
the possible choices of $\ell$ would clearly represent a rather
powerful tool.

In order to carry on the above program, we will often use the following
technical ingredient (cf. \cite{CMRT}, Claim~4.6).
%
\begin{lemma}[(The enlargement trick)]
\label{lem:enlarge}Consider two boxes $\L_1=\prod_{i=1}^d[a_i,c_i]$ and
$\L_2=\prod_{i=1}^d[b_i,c_i]$, with $a_i< b_i\le c_i\ \forall i$. Let
$\chi(\eta)$ be the indicator function of the event that the
configuration $\eta\in\O$ has a zero inside the box
$\L_3=\prod_{i=1}^d[a_i,d_i]$ where $a_i\le d_i<b_i, \forall i$. Then
%
\[
\pi \bigl(\chi\var_{\L_2}(f) \bigr)\le\trel^\mathrm{ min}(\L
_1;q)\sum_{x\in
\L_1}\pi
\bigl(c_x\var_x(f) \bigr)\qquad \forall f\in L^2(
\O,\pi).
\]
\end{lemma}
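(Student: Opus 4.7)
The plan is to leverage the factor $\chi$ to turn an honest vacancy inside $\L_3$ into a substitute for the fictitious vacancy of a minimal boundary condition on the larger box $\L_1$. Observe that the hypothesis $d_i<b_i$ forces $\L_2\cap\L_3=\emptyset$, so $\chi$ is measurable with respect to variables lying outside $\L_2$.

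First I would commute $\chi$ with the conditional variance: since $\chi^2=\chi$ and $\chi$ is constant in the $\L_2$-coordinates,
\begin{equation*}
\chi\,\var_{\L_2}(f)=\var_{\L_2}(\chi f),
\end{equation*}
and the law of total variance applied to the nested sets $\L_2\subset \L_1$ yields
\begin{equation*}
\pi\bigl(\chi\,\var_{\L_2}(f)\bigr)=\pi\bigl(\var_{\L_2}(\chi f)\bigr)\le \pi\bigl(\var_{\L_1}(\chi f)\bigr),
\end{equation*}
where one also integrates out $\pi$ over the complement of $\L_1$.

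Next, after conditioning on the configuration outside $\L_1$, I would apply the Poincar\'e inequality \eqref{eq:gap} on $\L_1$ with \emph{minimal} boundary condition $\s_{\min}$, whose relaxation time is $\trel^{\min}(\L_1;q)$ by Lemma~\ref{lem:ineq}. This gives
\begin{equation*}
\var_{\L_1}(\chi f)\le \trel^{\min}(\L_1;q)\sum_{x\in\L_1}\pi_{\L_1}\bigl(c_x^{\L_1,\s_{\min}}\var_x(\chi f)\bigr).
\end{equation*}
Site by site I would then unfold the Dirichlet form. At $x\notin\L_3$ the factor $\chi$ is independent of $\eta_x$, so $\var_x(\chi f)=\chi\var_x(f)\le\var_x(f)$; at $x\in\L_3$ a direct computation based on $\chi=1-\prod_{y\in\L_3}\eta_y$ still controls $\var_x(\chi f)$ by an absolute constant times $\var_x(f)$.

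The main obstacle is replacing the minimal-boundary constraint $c_x^{\L_1,\s_{\min}}$ by the true infinite-volume constraint $c_x$ appearing on the right-hand side of the lemma. At sites $x\in\L_1$ not adjacent to $\partial_E\L_1$ the two constraints coincide, and the only honest mismatch occurs at the lower corner $x=a\in\L_3$, where $c_a^{\L_1,\s_{\min}}\equiv 1$ thanks to the fictitious vacancy at $a-e_1$. Here one exploits that the presence of $\chi$ forces a real vacancy inside $\L_3$: by a short canonical-path comparison in the spirit of \cite{CMRT}*{Claim~4.6}, one routes this internal vacancy to $\partial_E\L_1$ along East-like moves whose Dirichlet cost is already encoded in the sum $\sum_{x\in\L_1}\pi(c_x\var_x(f))$. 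Averaging over the possible position of the $\L_3$-vacancy then closes the estimate.
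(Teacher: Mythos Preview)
Your proof has a genuine gap at the ``site by site'' step for $x\in\L_3$. The assertion that $\var_x(\chi f)$ is controlled by an absolute constant times $\var_x(f)$ is false. Write $\chi=1-\eta_x\rho$ with $\rho=\prod_{y\in\L_3,\,y\neq x}\eta_y$. On the event $\{\rho=1\}$ one has $\chi f=(1-\eta_x)f$, whose $\var_x$ equals $pq\,f(\eta^{x\to 0})^2$, while $\var_x(f)=pq\bigl(f(\eta^{x\to 1})-f(\eta^{x\to 0})\bigr)^2$. These are not comparable: take $f\equiv 1$ to get $\var_x(f)=0$ but $\var_x(\chi f)=pq>0$. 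More globally, with $f\equiv 1$ your chain of inequalities produces $\pi(\var_{\L_1}(\chi))>0$ after the enlargement $\L_2\to\L_1$, and this can never be bounded by $\sum_{x\in\L_1}\pi(c_x\var_x(1))=0$. The loss is already incurred when you move $\chi$ inside $\var_{\L_2}$ and then enlarge to $\L_1$, since $\chi$ becomes non-constant on $\L_1$; the damage only surfaces at step~4. Your step~5 is also left at the level of a sketch: after dropping $\chi$ via $\chi\var_x(f)\le\var_x(f)$ at $x\notin\L_3$, there is no remaining factor of $\chi$ to feed the canonical-path argument at the corner.

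The paper's proof avoids all of this by never touching $\chi f$. Instead it decomposes $\chi=\sum_{z\in\L_3}\mathds 1_{\{\xi=z\}}$, where $\xi$ is the \emph{first} vacancy of $\eta$ in $\L_3$ in an ordering by $\ell_1$-distance from $a$. The key point is that the event $\{\xi=z\}$ is measurable with respect to $\eta$ outside the box $\L_z:=[z_1{+}1,c_1]\times\prod_{i\ge 2}[z_i,c_i]$, and $\L_2\subset\L_z\subset\L_1$ with $z\in\partial_E\L_z$. One then applies the Poincar\'e inequality to $f$ (not $\chi f$) on $\L_z$ with boundary condition $\eta\restriction_{\partial_E\L_z}$, which is ergodic because $\eta_z=0$. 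Since this boundary condition is the actual configuration, the constraint identity $c_x^{\L_z,\eta\restriction_{\partial_E\L_z}}(\eta)=c_x(\eta)$ holds for every $x\in\L_z$, so no replacement of constraints is needed. Monotonicity (Lemma~\ref{lem:ineq}) gives $\trel^{\eta\restriction_{\partial_E\L_z}}(\L_z;q)\le\trel^{\rm min}(\L_1;q)$, and summing over $z$ reassembles $\chi$ and yields the lemma.
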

\begin{pf}
For a configuration $\eta$, such that $\chi(\eta)=1$, let
$\xi=(\xi_1,\ldots, \xi_d)$ be the location of the first zero of
$\eta$
in the box $\L_3$
according to the order given by the $\ell_1$ distance $\|\cdot\|_1$ in
${\mathbb Z}^d$ from
the vertex
$a=(a_1,\ldots, a_d)$ of the box $\L_1$ and some arbitrary order on the
hyperplanes $\{y\in{\mathbb Z}^d\dvtx  \|y-a\|_1= \mathrm{const}\}$. Let $\L_\xi
$ be the
box $[\xi_1+1,c_1]\times\prod_{i=2}^d[\xi_i,c_i]$. Then
\begin{eqnarray*}
\pi \bigl(\chi\var_\L(f) \bigr)&=& \sum
_{z\in\L_3}\pi \bigl({\mathds 1}_{\{\xi=z\}}
\var_{\L_1}(f) \bigr)\le\sum_{z\in\L_3}\pi
\bigl({\mathds 1}_{\{\xi=z\}}\var_{\L_{\xi}}(f) \bigr)
\\
&\le&\sum_{z\in\L_3}\trel^\mathrm{ min}(
\L_z;q)\pi \biggl({\mathds 1}_{\{\xi=z\}}(\eta)\sum
_{x\in
\L_z}c_x^{\L_z,\eta\restriction_{\partial_E \L_z}}\var _x(f)
\biggr)
\\
&\le&\trel^\mathrm{ min}(\L_1;q)\sum
_{z\in\L_3} \pi \biggl({\mathds 1}_{\{\xi=z\}}\sum
_{x\in\L_z}c_x \var_x(f) \biggr)
\\
&\le&\trel^\mathrm{ min}(\L_1;q)\sum
_{x\in
\L_1} \pi \bigl(c_x\var_x(f) \bigr).
\end{eqnarray*}
Above we used the convexity of the variance in the first inequality,
the Poincar\'e inequality for the box $\L_z$ together with
Lemma~\ref{lem:ineq} in the second inequality, again
Lemma~\ref{lem:ineq} together with the equality
$c_x^{\L,\eta\restriction_{\partial_E\L}}(\eta)=c_x(\eta)$ for all
$\L,x\in\L$ and $\eta\in\O$.
\end{pf}

\subsection{Capacity methods}
Since the East-like process in a box $\L\subset
{\mathbb Z}^d$ with boundary conditions $\s$ has a reversible measure (the
measure $\pi_\L$), one can associate
to it an
\emph{electrical network} in the standard way
(cf., e.g., \cite{G09}). For lightness of notation in what follows, we
will often drop the
dependence on $\L$ of the various quantities of interest.

We first define the transition rate $ \cK^\s(\h,\h') $ between two
states $\h,\h'\in\O_\L$ as
\begin{eqnarray*}
\cK^\s \bigl(\h,\h' \bigr) = \cases{
c_x^{\L,\s}(\h) \bigl[q \h_x+ p(1-
\h_x) \bigr], &\quad  $\mbox{if $\h' = \h^x$ for
some $x\in\L$},$ \vspace*{2pt}
\cr
0, & \quad $\mbox{otherwise.}$ }
\end{eqnarray*}
Since the process is reversible, we may associate with each pair $(\h
,\h
')\in\O_{\L}^2$ a~conductance $\cC^\s(\h,\h')=\cC^\s(\h',\h)$
in the usual way [see \eqref{eq:gen}],
%
\begin{equation}
\cC^\s \bigl(\h,\h' \bigr) =\pi(\h)
\cK^\s \bigl(\h,\h' \bigr).
\end{equation}
Observe that $\cC^\s(\h,\h') > 0$ if and only if $\h' = \h^x$ for some
$x\in\L$ and $c_x^{\L,\s}(\h) = 1$.
We define the edge set of the electrical network by
\[
E_\L^\s= \bigl\{ \bigl\{\h,\h' \bigr\}
\subset\O_\L\dvtx \cC^\s \bigl(\h,\h'
\bigr) > 0 \bigr\}.
\]
Notice that $E_\L^\s$ consists of unordered pairs of configurations. We
define the resistance $r^\s(\h,\h')$ of the edge $\{\h,\h'\}\in
E_\L^\s
$ as the reciprocal of the conductance $\cC^\s(\h,\h')$.
With the above notation, we may express the generator \eqref{eq:gen} as
\[
\cL_\L^\mathrm{\s}f(\h) = \sum
_{x\in\L} \frac{\cC^\s(\h,\h
^x)}{\p(\h
)} \bigl[ f \bigl(\h^x
\bigr)-f(\h) \bigr].
\]
Given $B \subset\O_\L$, we denote by $\t_B$ the hitting time
\[
\t_B=\inf \bigl\{ t > 0 \dvtx \eta(t) \in B \bigr\},
\]
and denote by $\t_B^+$ the first return time to $B$
\[
\t_B^+=\inf \bigl\{ t > 0 \dvtx \eta(t) \in B, \h(s)\neq\h(0)
\mbox{ for some } 0<s<t \bigr\}.
\]
We define the \emph{capacity} $C^\s_{A,B}$ between two disjoint subsets
$A$, $B$ of $\O_\L$ by
%
\begin{equation}
\label{eq:cap-def} C_{A,B}^\s= \sum
_{\z\in A}\pi(\z)\cK^\s(\z) \bbP^{\L,\s}_\z
\bigl(\tau _A^+ > \tau_B \bigr) ,
\end{equation}
where $\cK^\s(\z) = \sum_{\xi\neq\z}\cK^\s(\z,\xi)$ is the
holding rate
of state $\z$ (see, e.g., \cite{Beltran}, Section~2).
The \emph{resistance} between two disjoint sets $A,B$ is defined by
%
\begin{equation}
\label{eq:res-def} R^\s_{A,B} := 1/C_{A,B}^\s.
\end{equation}
With slight abuse of notation, we write $C_{\z,B}^\s$ and $R_{\z,B}$,
if $A=\{\z\}$ with $\z\notin B$.
The mean hitting time $\bbE_\z^{\L,\s}(\t_B)$ can be
expressed as (see, e.g., formula (3.22) in \cite{BovierNew}):
%
\begin{equation}
\label{eq:Thit-Cap} \bbE^{\L,\s}_{\z}(\t_B)=
R_{\z,B}^\s\sum_{\h\notin B} \p(\h)
\bbP^{\L
,\s}_\h (\tau_{\{\z\}} < \tau_B
).
\end{equation}
The following variation principle, useful for finding \emph{lower}
bounds on the resistance (i.e., upper bounds on the
capacity), is known
as the \emph{Dirichlet principle} (see, e.g., \cite{G09}):
%
\begin{equation}
\label{eq:dir-princ} C_{A,B}^\s= \inf \bigl\{
\cD_\L^\s(f) \dvtx f\dvtx \O_\L\to\bbR,
f|_A= 1 , f|_{B}=0 \bigr\},
\end{equation}
where the Dirichlet form $\cD_\L^\s(f)$ is given in \eqref{eq:DirForm}.
%
\begin{remark}
\label{rem:capmono}
It is clear from \eqref{eq:DirForm} that the capacity increases as
vacancies are added to the boundary conditions and, therefore, the
resistance decreases.
This is also a consequence of the Rayleigh's monotonicity principle,
which states that inhibiting allowable transitions of the process can
only increase the resistance.
\end{remark}
In order to get \emph{upper} bounds on the resistance, it is useful to
introduce the notion of a \emph{flow} on the electrical network.
For this purpose, we define the set of oriented edges
\[
\vec{E}_\L^\s= \bigl\{ \bigl(\h,\h'
\bigr) \in\O_\L^2 \dvtx \bigl\{\h,\h' \bigr
\} \in E_\L ^\s \bigr\}.
\]
For any real valued function $\theta$ on
\emph{oriented edges}, we define the divergence of $\theta$ at $\xi
\in\O
_\L$ by
\[
\div\theta(\xi) = \sum_{\h \dvtx (\xi,\eta)\in\vec{E}_\L^\s
}\theta (\xi,\h).
\]
%
\begin{definition}[(Flow from $A$ to $B$)]
A flow from the set $A\subset\Omega_\L$ to a disjoint set $B\subset
\Omega_\L$, is a real
valued function $\theta$ on $\vec E_\L^\s$ that is antisymmetric
[i.e.,
$\theta(\s,\h)=-\theta(\h,\s)$] and satisfies
\begin{eqnarray*}
\div\theta(\xi) &=& 0 \qquad\mbox{if } \xi\notin A\cup B ,
\\
\div\theta(\xi) &\geq&0\qquad \mbox{if } \xi\in A ,
\\
\div\theta(\xi) &\leq&0\qquad \mbox{if } \xi\in B.
\end{eqnarray*}
The strength of the flow is defined as $|\theta| = \sum_{\xi\in
A}\div\theta(\xi)$. If $|\theta|=1$ we call $\theta$ a \emph{unit flow}.
\end{definition}
%
\begin{definition}[(The energy of a flow)] The energy associated with a
flow $\theta$ is defined by
%
\begin{equation}
\label{eq:flowEnergy} \cE(\theta) = \frac{1}2 \sum
_{(\h,\h')\in\vec E_\L^\s} r^\s \bigl(\h ,\h' \bigr)
\theta \bigl(\h,\h' \bigr)^2.
\end{equation}
\end{definition}
With the above notation, \emph{Thompson's principle} states that
%
\begin{equation}
\label{eq:thom} R_{A,B}^\s= \inf \bigl\{ \cE(\theta) \dvtx
\theta\mbox{ is a unit flow from } A \mbox{ to } B \bigr\},
\end{equation}
and that the infimum is attained by a unique minimizer called the \emph
{equilibrium flow}.

We conclude with a concrete application to the East-like
process. Given $x\in\L=[1,L]^d$, let $\tau_x$ be the
hitting time of the set $B_x:=\{\h\in\O_\L\dvtx \h_x=0\}$ for the
East-like process in $\L$ with $\s$ boundary
condition and let
$\bbE_{\mathds1}^{\L,\s}(\t_x)$ be its average when the starting
configuration has no
vacancies (denoted by $\mathds1$).
Also, let $\tilde\tau_x$ be the
hitting time of the set $\widetilde B_x:=\{\h\in\O_\L\dvtx \h_x=1\}$ for
the East-like process in $\L$ with $\s$ boundary
condition and let $\bbE_{\mathds10}^{\L,\s}(\tilde\t_x)$ be its
average when the starting configuration has only a single vacancy which
is located at $x$ (denoted by $\mathds10$).
%
\begin{lemma}\label{lem:capbnds}
Suppose $L\leq2^{\theta_q/d }$ with $q < 1/2$.
Then there exists a constant $c > 0$ (independent from $q$ and $d$)
such that
%
\begin{eqnarray}
\label{eq:cap-bnds} c R_{\1,B_x}^\s&\leq&\bbE_{\mathds1}^{\L,\s}(
\t_x)\leq R_{\1
,B_x}^\s \quad \mbox{and}
\nonumber
\\[-8pt]
\\[-8pt]
\nonumber
c q R_{\1,\widetilde B_x}^\s&\leq&\bbE_{\mathds10}^{\L,\s}(
\tilde \t _x)\leq q R_{\1,\widetilde B_x}^\s.
\end{eqnarray}
\end{lemma}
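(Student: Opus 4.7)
The plan is a one-shot application of the hitting-time/resistance identity \eqref{eq:Thit-Cap}. With $\zeta = \1$, $B = B_x$ (and $\1 \notin B_x$ since $\1_x = 1$) the identity reads
\[
\bbE_{\1}^{\L,\s}(\t_x) = R_{\1, B_x}^\s \sum_{\eta \notin B_x} \pi(\eta)\,\bbP^{\L,\s}_\eta\bigl(\t_{\{\1\}} < \t_{B_x}\bigr),
\]
and analogously with $\zeta = \mathds 10$, $B = \widetilde B_x$ (note $\mathds 10\notin\widetilde B_x$). The task reduces to sandwiching each of the two sums between positive constants, with an extra factor of $q$ appearing in the second case from the mass of $\widetilde B_x^c$.

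For the upper bounds I would bound every probability trivially by $1$, reducing the sum to $\pi(B^c)$. Since $\pi(B_x^c) = 1-q \leq 1$ this immediately yields $\bbE_{\1}(\t_x) \leq R^\s_{\1, B_x}$, and since $\pi(\widetilde B_x^c) = \pi(\eta_x = 0) = q$ this yields $\bbE_{\mathds 10}(\tilde\t_x) \leq q R^\s_{\mathds 10, \widetilde B_x}$, matching the claimed inequalities with no further work.

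For the lower bounds I would retain only the diagonal term $\eta = \zeta$ in the sum. Because the process is in continuous time and starts at $\zeta$, the trajectory remains at $\zeta$ on a nontrivial initial interval, so $\t_{\{\zeta\}} = 0$ a.s., while $\t_B > 0$ since $\zeta \notin B$; the corresponding probability therefore equals $1$ and the sum is at least $\pi(\zeta)$. Now $\pi(\1) = (1-q)^{L^d}$ and $\pi(\mathds 10) = q(1-q)^{L^d-1}$. The hypothesis $L \leq 2^{\theta_q/d}$ is equivalent to $L^d \leq 1/q$, so both factors $(1-q)^{L^d}$ and $(1-q)^{L^d-1}$ are bounded below by $(1-q)^{1/q}$, a continuous positive function on $(0,1/2)$ bounded away from zero (its infimum is $1/4$, attained as $q \to 1/2$). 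Taking $c$ to be this infimum produces the two claimed lower bounds.

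I do not foresee any serious obstacle here: the lemma is essentially an unpacking of \eqref{eq:Thit-Cap}. The only conceptual point worth flagging is that the lower bound is genuinely sensitive to the scale constraint $L \leq 2^{\theta_q/d}$, which is precisely what keeps $\pi(\1)$ bounded away from zero; at larger scales $\pi(\1)$ decays as $e^{-qL^d}$ and a substantially different argument would be required.
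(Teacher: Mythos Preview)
Your proof is correct and essentially identical to the paper's: both invoke \eqref{eq:Thit-Cap}, bound the sum above by $\pi(B^c)$ (giving $1$ and $q$ respectively) and below by the single term $\pi(\zeta)$, then use that $(1-q)^{L^d}\ge (1-q)^{1/q}\ge c>0$ on $q\in(0,1/2)$. The paper's proof is just a terser version of what you wrote.
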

\begin{pf}
Setting $c := \inf\{ (1-q)^{1/q} \dvtx q \in(0,1/2)\}>0$, we have
$(1-q)^{L^d} \geq c$.
We now observe that
\[
c \leq(1-q)^{L^d} \leq\pi(\1) \leq\sum_{\h\in B_x^c}
\pi(\h)\bbP ^{\L,\s}_{\h}(\tau_{\1}<
\t_{x})\leq\pi \bigl(B^c_x \bigr) = p\leq1 ,
\]
and similarly
\[
c q \leq q(1-q)^{L^d} \leq\pi(\10) \leq\sum
_{\h\in\widetilde
B_x^c}\pi(\h)\bbP^{\L,\s}_{\h}(
\tau_{\10}<\widetilde\t _{x})\leq\pi \bigl(\widetilde
B^c_x \bigr) = q ,
\]
the result follows at once from \eqref{eq:res-def} and \eqref{eq:Thit-Cap}.
\end{pf}

\subsection{Bottleneck inequality}
\label{bottleneck}
One can lower bound the relaxation time (i.e., upper bound the
spectral gap) by restricting the variational formula \eqref{eq:gap}
to indicator functions of subsets of $\O_\L$. In this way, one gets
(cf., e.g., \cite{Saloff})
%
\begin{equation}
\label{biancaneve} T^{\s} _\mathrm{ rel}(\L;q) \geq\max
_{A\subset\O_\L}\frac{ \p(A)
\p
(A^c) }{\cD^{\s}_\L( \mathds{1}_A) }.
\end{equation}
Using reversibility, the Dirichlet form $\cD^{\s}_\L(\mathds{1}_A)$ can
be written as
%
\begin{equation}
\label{rane} \cD^{\s}_\L(\mathds{1}_A)=
\sum_{\eta\in A,\h'\in A^c}\pi(\eta )\cK^{\s
} \bigl(\eta,
\h' \bigr)= 
\sum
_{\eta\in\partial A} \p(\eta) \cK^{\s} \bigl(\eta,A^c
\bigr), 
\end{equation}
where
%
\begin{equation}
\label{a-} \partial A:= \bigl\{ \h\in A \dvtx \exists \h'\in
A^c \mbox{ such that } \cK^{\s} \bigl(\h,\h'
\bigr)>0 \bigr\}
\end{equation}
is the internal boundary of $A$ and
%
\begin{equation}
\label{atmosfera} \cK^{\s} \bigl(\eta,A^c \bigr) = \sum
_{\s\in A^c}\cK^{\s}(\eta,\s)
=
\mathop{\sum_{x\in\L\dvtx  c^{\L, {\s}}_x(\eta)=1 ,}}_{ \eta^x \notin
A} \bigl\{q
\eta_x +p(1-\eta_x) \bigr\}
\end{equation}
is the escape rate from $A$ when the chain is in $\eta$.
Using the trivial bound $\cK^{\s} (\eta,A^c)\le L^d$, we get that
$\cD
^{\s}_\L(\mathds{1}_A)\le
L^d \pi(\partial A)$ and the relaxation time satisfies
\[
\trel^\mathrm{ max}(\L;q)\ge\max_{A\subset\O_\L}
\frac
{1}{L^{d}} \frac{ \p
(A) \p(A^c) }{\pi(\partial A)}.
\]
The boundary $\partial A$ of a set $A$ with a small ratio $\cD^{\s
}_\L
( \mathds{1}_A) /
 (\p(A) \p(A^c) )$ is usually referred to as a \emph{bottleneck}.
A good general strategy to find lower bounds on the relaxation time is
therefore to look for small bottlenecks
in the state space (cf. \cite{Levin2008,Saloff}).
\section{Algorithmic construction of an efficient bottleneck}
\label{bottiglia}
In this section, we will construct a bottleneck (cf. Section~\ref{bottleneck}) which will prove some of the lower bounds in Theorems
\ref
{th:main2} and \ref{th:main3}.
%
\begin{theorem}
\label{pupi}
Fix $\L=[1,L]^d$, with $L=2^n$ and $n\le\theta_q/d$. Then there exists
$A_*\subset\O_\L$ such that
%
\begin{equation}
\label{eq:dirbndbis} \cD^\mathrm{ max}_\L(\1_{A_*})
\leq2^{-n\theta_q +d
{n\choose2}-n\log_2 n +O(\theta_q) },
\end{equation}
and $1/2>\pi(A_*) \geq q/2$ for $q$ sufficiently small.
\end{theorem}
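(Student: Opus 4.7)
The plan is to construct $A_*$ via a recursive algorithm mirroring the hierarchical structure of the East-like cascade on $\L=[1,2^n]^d$, adapting to $d$ dimensions the one-dimensional bottleneck arguments of~\cite{CFM}. The algorithmic flavor is forced by the fact that moving a single vacancy from the maximal boundary to the far corner $v^*=(2^n,\dots,2^n)$ is controlled by a ``backbone'' of vacancies placed at a hierarchy of dyadic scales, and a good bottleneck must capture precisely the combinatorics of such backbones.

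First I would define, for each $\eta\in\O_\L$, a combinatorial depth $\Phi(\eta)\in\{0,1,\dots,n\}$ via a greedy procedure that starts at $v^*$ and recursively traverses the dyadic scales. At scale $k$, the algorithm examines the $2^d$ dyadic sub-cubes of side $2^{k-1}$ sharing $v^*$ as a corner, selects (via a canonical lexicographic rule) one containing a vacancy of $\eta$ in a specified sub-region, and then recurses on the chosen sub-cube. The output $\Phi(\eta)$ is the number of successful recursion steps. By construction, the event $\{\Phi\ge k\}$ requires the presence of a backbone of at least $k$ vacancies arranged in a nested sequence of halving sub-cubes, and $\Phi$ changes by at most $1$ under a single spin flip.

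I would then set $A_*=\{\eta:\Phi(\eta)\ge k^*\}$ with threshold $k^*$ close to $n$ and tuned so that $\pi(A_*)\in[q/2,1/2)$. The mass estimate follows from independence, since $\pi(\Phi\ge k)\asymp q^k$ up to a combinatorial factor counting the dyadic choices, so $k^*$ can be calibrated to place $\pi(A_*)$ at the scale $q$. For the Dirichlet form, the internal boundary $\partial A_*$ (cf.~\eqref{a-}) consists of configurations $\eta$ with $\Phi(\eta)=k^*$ admitting a single legal flip that drops $\Phi$ below $k^*$, and each such $\eta$ must contain a \emph{minimal} backbone of exactly $k^*$ vacancies on a monotone dyadic path from $\partial_E\L$ to $v^*$. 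Combining~\eqref{rane} with the trivial bound $\cK^{\rm max}(\eta,A_*^c)\le L^d=2^{nd}$ yields
\begin{equation*}
\cD^{\rm max}_\L(\1_{A_*})\le 2^{nd}\,\pi(\partial A_*)\le 2^{nd}\,N_{k^*}\,q^{k^*},
\end{equation*}
where $N_{k^*}$ counts admissible backbones. A careful count gives $N_{k^*}\le 2^{d\binom{k^*}{2}}/k^*!$: the exponent $d\binom{k^*}{2}$ arises from $k^*(k^*-1)/2$ nested pairs of scales each allowing $d$ binary choices among halving sub-cubes, while the denominator $k^*!$ removes the overcounting of backbone orderings. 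Stirling's formula $\log_2 k^*!=n\log_2 n+O(n)$, together with $nd\le\theta_q$ and $k^*=n-O(1)$, then yields the target bound $\cD^{\rm max}_\L(\1_{A_*})\le 2^{-n\theta_q+d\binom{n}{2}-n\log_2 n+O(\theta_q)}$.

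The main obstacle I anticipate is the exact calibration of the algorithm so that $\partial A_*$ carries precisely the entropy $d\binom{n}{2}-n\log_2 n$: naive definitions of $\Phi$ produce spurious factors (for instance $d^{k^*}$ if dimensions are picked uniformly at each recursion level, or a stray $k^*!$ if the ordering of backbone vacancies is not quotiented out). Achieving the right exponent requires a canonical tie-breaking rule among the $2^d$ sub-cubes at each scale together with a pruning step that collapses redundant backbones, and matching lower bounds on $\pi(\Phi\ge k^*)$ of the same combinatorial order as the upper bound used in the Dirichlet form estimate.
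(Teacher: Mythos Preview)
Your construction has a structural gap in the mass estimate. You stipulate that $\{\Phi\ge k\}$ forces at least $k$ vacancies, and you need $k^*=n-O(1)$ to get the factor $q^{k^*}$ in the Dirichlet bound. But then
\[
\pi(A_*)\;\le\;\pi\bigl(\,|\{x\in\L:\eta_x=0\}|\ge k^*\,\bigr)\;\le\;\binom{L^d}{k^*}q^{k^*}\;\le\;\frac{(L^d q)^{k^*}}{k^*!}\;\le\;\frac{1}{k^*!},
\]
since $L^d\le 1/q$ under $n\le\theta_q/d$. As soon as $\log_2 n>d+O(1)$ this is $2^{-n\log_2 n+O(n)}\ll 2^{-\theta_q}=q$, so the requirement $\pi(A_*)\ge q/2$ fails. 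Lowering $k^*$ to recover the mass destroys the factor $q^{k^*}$ in the Dirichlet estimate. The underlying issue is that in a level-set bottleneck $\{\Phi\ge k^*\}$ the \emph{same} backbone condition controls both $A_*$ and $\partial A_*$: your own union bound gives the same upper estimate for $\pi(A_*)$ as for $\pi(\partial A_*)$, so you cannot separate the two by the required exponential factor.

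The paper avoids this by a different construction. Its $A_*$ is not a level set of a depth function but the preimage of the single-vacancy configuration $\mathds{1}0$ under a deterministic gap-removal map $\eta\mapsto\Phi_{L-1}(\eta)$; in particular $\mathds{1}0\in A_*$, so $\pi(A_*)\ge\pi(\mathds{1}0)\ge q/2$ with just \emph{one} vacancy. The work is then to show that every $\eta\in\partial A_*$ nonetheless carries $n{+}1$ special vacancies, even though a generic $\eta\in A_*$ need not. This is done by tracking how a single flip-discrepancy between $\eta$ and $\eta^z$ propagates through the successive stages of the gap-removal dynamics (Lemmas~\ref{lem:prelim} and~\ref{svegliadalle3}), producing for each boundary configuration an algorithmic certificate $(z_1,\dots,z_{n+1})$ of vacancy positions whose range is counted via Claim~\ref{lem:int-bnd}. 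The crucial idea you are missing is this decoupling: membership in $A_*$ is cheap (one vacancy suffices) while crossing $\partial A_*$ is expensive ($n{+}1$ vacancies forced); a threshold on a depth function ties the two together and cannot achieve it.
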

In the one-dimensional case, the construction of a bottleneck with the above
properties has been carried out in \cite{CFM}. The extension to higher
dimensions requires some nontrivial
generalization of the main ideas of \cite{CFM} and the whole analysis
of the bottleneck $A_*$ becomes more
involved. The plan of the proof goes as follows:
\begin{longlist}[1.]
\item[1.] We first define the set $A_*$. For any $\eta\in\O_\L$, we
will remove its vacancies according to a deterministic rule until we either
reach the configuration without vacancies, and in that case we say
that $\eta\notin A_*$, or we reach the
configuration with exactly one vacancy at the upper corner $v^*$ of
$\L$ and in that case we declare $\eta\in A_*$.
\item[2.] Next, we prove some structural properties of the
configurations $\eta\in\partial A_*$. The main combinatorial result
here is that,
if $L=2^n$, then
$\eta\in\partial A_*$ \emph{must} have at least $n+1$ ``special''
vacancies at
vertices $(z_1,\ldots,z_{n+1})$, where the range of the possible
values of the $(n+1)$-tupla $(z_1,\ldots,z_{n+1})$ is a set
$\G_\L^{(n)}$ of
cardinality $|\G_\L^{(n)}|\le2^{2d(n+1)}
\frac{ 2^{ d {n\choose2} } }{n! d^n}$.
\item[3.] The proof is readily finished by observing that $\pi
(\partial
A_*)\le q^{n+1}|\G_\L^{(n)}|$.
\end{longlist}

\subsection{Construction of the bottleneck}
In what follows, we write $\ll$ for the lexicographic order in
${\mathbb Z}
^d$, that is, $x \ll y$
if and only if $x_i \leq y_i$ for all $1\leq i \leq d$. For any box
$\L$, we define $\bar\L:=\L\cup\partial_E \L$.
Given $\eta\in\O_\L$, with some abuse of notation we will sometimes
also denote by
$\eta$ the configuration in $\O_{\bar\L}$ which coincides with
$\eta$
on $\L$ and which
is zero on $\partial_E\L$.
%
\begin{definition}
Given $x \in\L$ and $\eta\in\O_\L$, we define the gap of $x$ in
$\eta
$ by
%
\begin{equation}
\label{saab} g_x (\eta):= \min\bigl\{ g >0\dvtx \exists z \in\bar\L
\mbox{ with } z \ll x , \eta_z=0 , \|x-z\|_1=g\bigr\}.
\end{equation}
If $\eta_x=0$, we say that $g_x(\eta)$ is the gap of the vacancy at $x$.
\end{definition}
Note that in \eqref{saab}
$g$ varies among the positive integers and the minimum is always
realized since $\eta$ is defined to be zero on $\partial_E \L$.
Moreover, we know that $g_x (\eta) \leq L$. See Figure~\ref{fig:grappino} for an example.
%
\begin{figure}[b]

\includegraphics{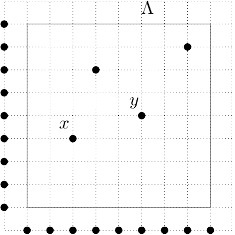}

\caption{A configuration $\eta$ extended with maximal boundary
conditions. The gap of the vacancies at $x,y$ are: $g_x(\eta)=3$,
$g_y(\eta)=4$.}
\label{fig:grappino}
\end{figure}

Following \cite{CFM}, we now define a deterministic discrete time
dynamics, which will be the key input for the construction of the
bottleneck $A_*$.

Starting from $\eta$, the successive stages of the dynamics will be
obtained recursively by first removing from $\eta$ all vacancies with
gap one, then removing from the resulting configuration all
vacancies with gap two, and so on until all vacancies with gap
size $L-1$ have been removed. We stop before removing all vacancies
with gap $L$ since this would always give rise to the configuration
with no vacancy.

More precisely, given $\eta\in\O_\L$ and a positive integer $g$, we
define $\phi_g(\eta)\in\O_\L$ as
%
\begin{equation}
\phi_{g} (\eta)_y:= \cases{ 1, &\quad  $\mbox{ if }
g_y(\eta)=g ,$\vspace*{2pt}
\cr
\eta_y, &\quad $\mbox{
otherwise}.$}
\end{equation}
Then the deterministic dynamics starting from $\eta$ is given by the trajectory
$(\Phi_0 (\eta)$, $\Phi_1 (\eta),\ldots,  \Phi_{L-1}(\eta))$ where
\[
\Phi_0 (\eta):= \eta,\qquad \Phi_g(\eta):=
\phi_g \bigl( \Phi_{g-1} (\eta) \bigr),\qquad g=1,2, \ldots, L-1.
\]
Since all vacancies in $\Phi_{L-1}(\eta)$ have gap of size at least
$L$, the configuration $\Phi_{L-1}(\eta)$ can either be the
configuration with no vacancies, in the sequel denoted by
$\mathds{1}$, or the configuration with exactly one vacancy at $v^*:=
(L,L,\ldots,L)$, in the sequel denoted by $\mathds{1}0$. In what
follows, it will be convenient to say that a vacancy at vertex $x$ is
removed at stage $g$ from a configuration $\z$
if $\Phi_{g-1} (\z)_{x}=0$ and
$\Phi_{g} (\z)_{x}=1$.

We are now in a position to define the bottleneck.
%
\begin{definition}
We define $A_* \subset\O_\L$ as the set of configurations $\eta\in
\O
_\L$ such that $\Phi_{L-1} (\eta)=\mathds{1}0$.
\end{definition}
%
\begin{remark}
Since $ \mathds{1}0 \in A_*$ and $\mathds{1}\notin A_*$,
any path in $\O_\L$ connecting
$ \mathds{1}0$ to $\mathds{1}$ (under the East-like dynamics with
maximal boundary conditions) must cross $\partial A_*$ [cf. \eqref{a-}].
\end{remark}
Some properties of the deterministic dynamics, which are an
immediate consequence of the definition and are analogous to those
already proved in \cite{CFM} for the
one-dimensional case, are collected below.
\begin{itemize}
\item The deterministic dynamics only remove vacancies so gaps are
increasing under the dynamics. Also, if $\h_x=0$ and $g_x(\h)=g$, then
$\Phi_{d}(\h)_x=0$ for all $d<g$.
\item$\Phi_g(\h)$ contains no vacancies with gaps smaller or equal
to $g$.
\item Whether the deterministic dynamics remove a vacancy at a point
$x$ depends only on $\{y\in\bar\L| y \ll x , y \neq x\}$.
\item For two initial configurations $\h$ and $\h'$, if $\Phi_g(\h
)=\Phi
_g(\h')$ then $\Phi_m(\h)=\Phi_m(\h')$ for all $m \geq g$. In this
case, we say the configurations $\h$ and $\h'$ are \emph{coupled} at
gap~$g$.
\end{itemize}
%
\subsection{Some structural properties of \texorpdfstring{$\partial A_*$}{$partial A_*$}}
Analogously to the
one-dimen\-sional case (cf. \cite{CFM}, Lemma~5.11), in order to compute
the cardinality of the boundary of the bottleneck
$\partial A_*$, we need to prove a structural result for the configurations
in $\partial A_*$.

Given $\h\in\partial A_*$ and $z\in\L$ such that $c_z^{ \L, \mathrm{
max}}(\h)=1$ and $\h^z\notin A_*$,
we know that at each stage $g$ of the deterministic dynamics there
must be at least one vertex at which the two configurations $\Phi_g(\h)$
and $\Phi_g(\h^z)$ differ. Furthermore, at least one of these
discrepancies must give rise to a new discrepancy before it is
removed by the deterministic
dynamics, and this must continue until
$\Phi_{L-1}(\h),\Phi_{L-1}(\h^z)$ have a discrepancy at the vertex
$v^*$. The next lemma clarifies this mechanism.
%
\begin{lemma}\label{lem:prelim}
Let $\eta\in\partial A_*$ and $z \in\L\setminus\{v^*\}$ be such
that $c_z ^{\L, \mathrm{ max}}(\eta)=1$ and
$\eta^z \notin A_*$. Then there exists a sequence
$z = u_0 \ll u_{1} \ll\cdots\ll u_M = v^*\in\L$ of length $M\geq
1$ such that, if 
$d_i:=\|u_{i-1}-u_{i}\|_1$, then $1=d_1<d_2<\cdots<d_M<L$ and:
\begin{longlist}[(iii)]
\item[(i)] $\Phi_{\ell}(\h)_{u_{i}}=\Phi_{\ell}(\h^z)_{u_{i}}=0$ for
$\ell
< d_{i}$,
\item[(ii)] $\Phi_{d_i}(\h)_{u_{i}}\neq\Phi_{d_i}(\h^z)_{u_{i}}$,
\item[(iii)] $\Phi_{d_i-1}(\h)_{u_{i-1}}\neq\Phi_{d_i-1}(\h
^z)_{u_{i-1}}$.
\end{longlist}
\end{lemma}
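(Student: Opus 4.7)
The strategy is an induction on $i$ that traces how the single initial discrepancy at $z$ between the two deterministic trajectories $\{\Phi_g(\eta)\}_{g\geq 0}$ and $\{\Phi_g(\eta^z)\}_{g\geq 0}$ propagates upward in the partial order $\ll$ from $u_0=z$ to $u_M=v^*$. The propagation is forced because $\Phi_g$ is local in $\ll$: the value $\Phi_g(\zeta)_y$ depends on $\Phi_{g-1}(\zeta)$ only through $\Phi_{g-1}(\zeta)_y$ and the gap $g_y(\Phi_{g-1}(\zeta))$, which in turn depends only on the restriction of $\Phi_{g-1}(\zeta)$ to $\{y'\in\bar\L\sst y'\ll y,\ y'\neq y\}$. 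Hence the two trajectories can disagree only at vertices $y$ with $z\ll y$. Moreover, since $\Phi_{L-1}$ of any configuration is either $\mathds 1$ or $\mathds{1}0$ (any surviving vacancy must have gap $\geq L$, forcing it to sit at $v^*$), the hypotheses $\eta\in A_*$ and $\eta^z\notin A_*$ give $\Phi_{L-1}(\eta)=\mathds{1}0$ and $\Phi_{L-1}(\eta^z)=\mathds 1$, so the propagation must reach $v^*$ by stage $L-1$.

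\emph{Base case.} Set $u_0:=z$ and $d_1:=1$; property (iii) at $i=1$ is tautological. If no neighbor of $z$ of the form $z+e\in\L$, $e\in\cB$, carried a vacancy in $\eta$, then the possible disagreements of $\Phi_1(\eta)$ and $\Phi_1(\eta^z)$ would be confined to $z$ and such neighbors. At $z$, both $\Phi_1(\eta)_z$ and $\Phi_1(\eta^z)_z$ equal $1$ (using $c_z^{\L,\max}(\eta)=1$, i.e.\ $g_z(\eta)=g_z(\eta^z)=1$); at a neighbor $z+e$ with $\eta_{z+e}=1$ both values equal $1$ trivially. Thus $\Phi_1(\eta)=\Phi_1(\eta^z)$, whence by the deterministic dynamics $\Phi_{L-1}(\eta)=\Phi_{L-1}(\eta^z)$, contradicting $\eta^z\notin A_*$. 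Hence some $u_1:=z+e\in\L$ satisfies $\eta_{u_1}=0$, and the verification of (i) and (ii) via the gap definition is straightforward.

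\emph{Inductive step.} Assume $u_0,\ldots,u_i$ with $d_1<\cdots<d_i$ satisfy (i)--(iii). If $u_i=v^*$, set $M:=i$ and stop. Otherwise, by (ii) one of the trajectories carries a vacancy at $u_i$ at stage $d_i$ and the other does not; since $u_i\neq v^*$ and $\Phi_{L-1}(\eta)$ has no vacancies outside $\{v^*\}$, that vacancy is removed at some stage $d^\star$ with $d_i<d^\star\leq L-1$. I would then define $d_{i+1}$ to be the smallest $g>d_i$ at which some $y\in\L$ with $u_i\ll y$, $y\neq u_i$, $\Phi_{g-1}(\eta)_y=\Phi_{g-1}(\eta^z)_y=0$ satisfies $\Phi_g(\eta)_y\neq\Phi_g(\eta^z)_y$, and let $u_{i+1}$ be a minimiser.

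\emph{Main obstacle.} The heart of the argument is (a) existence of $u_{i+1}$ and (b) the identity $d_{i+1}=\|u_{i+1}-u_i\|_1$. For (a), repeating the base-case reasoning within the sub-lattice $\{y\sst u_i\ll y\}$ shows that if no fresh discrepancy ever appeared strictly above $u_i$ at stages $g>d_i$, then the two trajectories would eventually agree everywhere in that sub-lattice, contradicting the disagreement at $v^*$ at stage $L-1$. For (b), the fresh discrepancy at $u_{i+1}$ stems from a differing gap at $u_{i+1}$ at stage $d_{i+1}-1$; by locality and the minimality of $d_{i+1}$, the distinguishing witness for this differing gap must be a vertex $w\ll u_{i+1}$ where $\Phi_{d_{i+1}-1}(\eta)$ and $\Phi_{d_{i+1}-1}(\eta^z)$ disagree, and a case analysis generalising the one-dimensional argument of \cite{CFM}*{Lemma 5.11} to $d\geq 2$ pins down $w=u_i$, which sits exactly at distance $d_{i+1}$ from $u_{i+1}$. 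Strict monotonicity $d_{i+1}>d_i$ holds because all vacancies of gap $\leq d_i$ were already removed by stage $d_i$ in both trajectories; (i) at $i+1$ follows from the minimality defining $d_{i+1}$; and (iii) follows because the vacancy at $u_i$ on the relevant side persists at least until stage $d^\star-1\geq d_{i+1}-1$. The chain terminates in at most $L-1$ steps by strict monotonicity of the $d_i$'s, and necessarily at $v^*$ since the inductive step can always be performed whenever $u_i\neq v^*$.
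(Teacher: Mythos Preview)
Your forward-induction strategy differs from the paper's proof, which runs \emph{backward} from $v^*$ toward $z$. The paper sets $v_1=v^*$, identifies the stage $c_1$ at which the vacancy at $v_1$ is removed from $\eta^z$ but not from $\eta$, and then \emph{defines} $v_2$ to be a vertex $\ll v_1$ at $\ell_1$-distance exactly $c_1$ witnessing this removal (such a witness exists because $g_{v_1}(\Phi_{c_1-1}(\eta^z))=c_1$, and it must be absent from $\Phi_{c_1-1}(\eta)$ since the vacancy at $v_1$ survives there). Iterating, each $v_{k+1}$ is chosen as a witness for the removal at $v_k$, so the relation $\|v_k-v_{k+1}\|_1=c_k$ holds \emph{by construction}; the $c_k$ strictly decrease and the recursion terminates at $z$ once $c_k=1$. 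Reversing indices yields the $(u_i,d_i)$.

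Your forward approach has a genuine gap at step (b). You define $u_{i+1}$ as a vertex with $u_i\ll u_{i+1}$ where a fresh discrepancy first appears at some stage $d_{i+1}>d_i$, and you correctly observe that the witness $w$ for the differing gap at $u_{i+1}$ must itself be a discrepancy at stage $d_{i+1}-1$ with $w\ll u_{i+1}$ and $\|u_{i+1}-w\|_1=d_{i+1}$. But nothing forces $w=u_i$. In dimension $d\ge 2$ the discrepancies branch: already at stage $1$ there may be several neighbours $z+e$ carrying a vacancy, each becoming a discrepancy, and these propagate along chains that are mutually incomparable in $\ll$. The witness $w$ for the gap at $u_{i+1}$ can perfectly well lie on a different branch, below $u_{i+1}$ but incomparable to $u_i$, so the claimed identity $d_{i+1}=\|u_{i+1}-u_i\|_1$ does not follow. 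The appeal to the one-dimensional argument of \cite{CFM} does not help here, since in $d=1$ the order $\ll$ is total and no branching occurs; it is precisely this difficulty that the paper's backward construction sidesteps, because there the next vertex is \emph{chosen} as the distance-realising witness rather than characterised after the fact.
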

The above properties can be described as follows. Both $\eta$ and
$\eta^z$ have a vacancy at $u_i, i=1,\ldots,M$. The vacancy
at $u_i$ survives for both configurations up to and including the
$(d_i-1)$th stage of
the deterministic dynamics. At stage $d_i$, the vacancy at $u_i$ is
removed from one configuration but not from the other because of the
original vacancy at $u_{i-1}$. The latter, in fact, is at distance
$d_i$ from $u_i$ and it survives up to the $(d_i-1)$th stage of
the deterministic dynamics only in one of the two configurations
$\Phi_{d_i-1}(\h), \Phi_{d_i-1}(\h^z)$.
%
\begin{pf*}{Proof of Lemma \ref{lem:prelim}}
We proceed by induction from $v^*$ toward $z$.
This gives rise to a sequence of vertices $\{v_i\}_{i=1}^{M+1}$ and
distances $\{c_i\}_{i=1}^M$ from which we define $\{u_i\}_{i=0}^{M}$,
$\{d_i\}_{i=1}^{M}$ by $u_i= v_{M-i+1}$ and $d_i=c_{M-i+1}$.

We begin by setting $v_1=v^*$. Since $\h\in\partial A_*$ and
$\h^z\notin A_*$, it follows that $\Phi_{\ell}(\h)_{v_1}=0$ for all
$\ell< L$ and $\Phi_{L-1}(\h^z)_{v_1}=1$.
Thus, there exists $1\leq c_1\leq L-1$ such that the vacancy at $v_1$
is removed from $\h^z$ but not from $\eta$ at stage $c_1$ of the
dynamics. 
This implies, in particular, that $g_{v_1}(\Phi_{c_1-1}(\h^z))=c_1$,
so that there exists a $v_{2} \ll v_1$ such that $\|v_1-v_{2}\|_1=c_1$
and $\Phi_{c_1-1}(\h^z)_{v_2}=0$.
Using the fact that the vacancy at $v_1$ is not removed from $\eta$ at
the $c_1$-stage of the dynamics, we conclude that the vacancy at
$v_{2}$ cannot be present in $\Phi_{c_1-1}(\eta)$, that is, $\Phi
_{c_1-1}(\h)_{v_2}=1$.
Since the deterministic dynamics at the point $v_2$ depend only on the
initial configuration in the region $\{x \in\L\dvtx x \ll v_2\}$, we
must have $z \ll v_2$.
This completes the proof of the first inductive step (note that the
proof is complete if $z= v_2$).

Assume now inductively that we have been able to find a sequence $z\ll
v_{k+1} \ll
v_{k} \ll\cdots\ll v_1 = v^*\in\L$ such that, setting $c_i := \|
v_i-v_{i+1}\|_1$, for $1 \leq i \leq k$ the following holds; $ 1\leq
c_k < c_{k-1} < \cdots< c_2 < c_1 <L$ and:
\begin{longlist}[(a)]
\item[(a)]$ \Phi_\ell(\h)_{v_i}=\Phi_\ell(\h^z)_{v_i}=0, \forall
\ell<c_i$,
\item[(b)] $ \Phi_{c_i}(\h)_{v_i}\neq\Phi_{c_i}(\h^z)_{v_i}$,
\item[(c)] $ \Phi_{c_i-1}(\h)_{v_{i+1}}\neq\Phi_{c_i-1}(\h^z)_{v_{i+1}}$.
\end{longlist}
If $c_k=1$, then $\h_{v_{k+1}} = \Phi_0(\h)_{v_{k+1}}\neq
\Phi_0(\h^z)_{v_{k+1}} =\h^z_{v_{k+1}}$ which in turn implies
$v_{k+1}=z$ and we stop, and fix $M=k$.
Otherwise, we may repeat the argument used for the first step as
follows.

The equality $\Phi_{c_k-1}(\h)_{v_{k+1}}\neq\Phi_{c_k-1}(\h
^z)_{v_{k+1}}$ implies that there must exist a first stage $c_{k+1}\leq
c_{k} - 1$
at which $\Phi$ removes the vacancy at $v_{k+1}$ from
either $\h$ or $\h^z$ but not from both.
If $c_{k+1}=0$, then again $v_{k+1}=z$, and since $c_z^{\L,\mathrm{ max}}(\h)
= 1$ we have $\Phi_1(\h)_z=\Phi_1(\h^z)_z=1$.
In particular, (c) above with $i=k$ implies that $c_k=1$ and we are
in the case described above, so we set $M=k$ and stop.
Thus, we can assume $c_{k+1} \geq1$.
Then $\Phi_{\ell}(\h)_{v_{k+1}}= \Phi_{\ell}(\h^z)_{v_{k+1}}=0$
for $\ell< c_{k+1}$,
and $\Phi_{c_{k+1}}(\h)_{v_{k+1}}\neq\Phi_{c_{k+1}}(\h^z)_{v_{k+1}}$
[thus assuring (a) and (b) for $i=k+1$]. 
Let $\xi= \h$ if \mbox{$\Phi_{c_{k+1}}(\h)_{v_{k+1}}=1$} and $\xi=\h^z$
otherwise.
So $g_{v_{k+1}}(\Phi_{c_{k+1}-1}(\xi)) = c_{k+1}$ by definition,
which implies that there exists a
$v_{k+2}\ll v_{k+1}$ with $\|v_{k+1}-v_{k+2}\|_1=c_{k+1}$
and $\Phi_{c_{k+1}-1}(\xi)_{v_{k+2}}=0$. Since the vacancy at
$v_{k+1}$ is not removed from $\xi^z$ at stage $c_{k+1}$ of the
dynamics, we must have $\Phi_{c_{k+1}-1}(\xi^z)_{v_{k+2}}=1$ [thus
completing the proof of (c) for $i=k+1$].
Following the same argument as for the first step of the induction we
must also have $z \ll v_{k+2}$.
We may continue by induction until $v_{k+1} = z$ and fix $M=k \geq1$.
The proof now follows by letting $u_i= v_{M-i+1}$ and $d_i=c_{M-i+1}$.
\end{pf*}

In light of the previous technical lemma, we are able to
generalize \cite{CFM}, Lemma~5.11, to higher dimensions.
%
\begin{lemma}\label{svegliadalle3}
Let $\eta\in\partial A_*$ and $z \in\L\setminus\{v^*\}$ be such
that \mbox{$c_z ^{\L, \mathrm{ max}}(\eta)=1$} and
$\eta^z \notin A_*$.
Fix a sequence $(u_i)_{i=0}^M\in\L$ according to
Lemma~\ref{lem:prelim}.
Let $B= \prod_{j=1}^d[a_j,b_j]$ be a box such that \textup{(i)} $B \subset
\bar\L$, \textup{(ii)} $z \in B$, \textup{(iii)} $z \neq a:=(a_1,a_2, \ldots, a_d)$ and
$\eta_{a}=0$, \textup{(iv)} $b:=(b_1,b_2, \ldots, b_d) = u_{k}$ for some
$k \dvtx 0\leq k \leq M$. Let $\ell:= \|B\|_1$ and let
\begin{eqnarray*}
B^- &:=& \bigl\{x \in\bar\L\setminus B \dvtx x \ll a \mbox{ and } \|x-a
\|_1< \ell\bigr\} ,
\\
B^+&:=& \bigl\{ x \in\bar\L\setminus B \dvtx x \gg b \mbox{ and } \|x-b
\|_1 \leq\ell\bigr\}.
\end{eqnarray*}
Then at least one of the following properties is fulfilled:
\begin{longlist}[1.]
\item[1.]$B$ contains $v^*=(L,L, \ldots, L)$ and some point of $\partial_E
\L$.
\item[2.]$a \notin\partial_E \L$ and $\eta$ has at least one vacancy in
$B^-\neq \varnothing$.
\item[3.]$k\neq M$ and $B^+$ contains $u_{k+1}$.
\end{longlist}
\end{lemma}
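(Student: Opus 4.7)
The plan is to establish the trichotomy by a four-branch case analysis, split by whether $k=M$ or $k<M$ and by a secondary binary condition, with two ``easy'' branches giving (1) or (3) directly from Lemma~\ref{lem:prelim} and two ``hard'' branches giving (2) through a unified gap-based argument on the deterministic dynamics. The easy branches are straightforward: if $k<M$ and $d_{k+1}\le\ell$, then Lemma~\ref{lem:prelim} provides $u_{k+1}\in\L$ with $u_{k+1}\gg b$, $u_{k+1}\ne b$, and $\|u_{k+1}-b\|_1=d_{k+1}\le\ell$, so $u_{k+1}\in B^+$ and (3) holds; if $k=M$ and $a\in\partial_E\L$, then $v^*=b\in B$ and $a\in B\cap\partial_E\L$, so (1) holds.

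The hard branches are Case C ($k=M$ with $a\notin\partial_E\L$) and Case D ($k<M$ with $d_{k+1}>\ell$). In both I would fix a configuration $\alpha\in\{\eta,\eta^z\}$ in which $b=u_k$ survives as a vacancy for a long time: in Case C take $\alpha=\eta$, since $\eta\in A_*$ forces the vacancy at $v^*$ to never be removed; in Case D take, via property (iii) of Lemma~\ref{lem:prelim}, the configuration with $\Phi_{d_{k+1}-1}(\alpha)_{u_k}=0$. In Case D a short preliminary rules out $a\in\partial_E\L$: a permanent boundary vacancy at $a$ would force $g_b(\Phi_{d_{k+1}-1}(\alpha))\le\ell<d_{k+1}$, contradicting the inductive lower bound on the gap of a surviving vacancy. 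In either case $a\in\L$, so the vacancy at $a$ is removed from $\alpha$ at some stage $s_a\le L-1$, and by the definition of the gap there is a witness $w\in\bar\L$ with $w\ll a$, $\|w-a\|_1=s_a$, and $\Phi_{s_a-1}(\alpha)_w=0$.

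The core step is to show $s_a<\ell$. Since $b$ remains a vacancy of $\Phi_g(\alpha)$ on a range containing $s_a-1$, the standard inductive property gives $g_b(\Phi_{s_a-1}(\alpha))\ge s_a$, while the simultaneous presence of the vacancy at $a$, at distance $\ell$ below $b$, forces $g_b(\Phi_{s_a-1}(\alpha))\le\ell$; hence $s_a\le\ell$, and equality is ruled out because it would force $b$ to be removed from $\alpha$ at stage $\ell$, contradicting our choice of $\alpha$. (In the subcase $\ell>L-1$ of Case C, strict inequality already comes for free from $s_a\le L-1<\ell$.) Once $s_a<\ell$ is established, the witness $w$ sits in $B^-$: since $z\gg a$ and $z\ne a$ one checks $w\ne z$, hence $\eta_w=\alpha_w=0$, and $w\ll a$ with $w\ne a$ (as $s_a\ge 1$) places $w\in\bar\L\setminus B$, so $w\in B^-$, proving (2). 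I expect the delicate point to be precisely this strict inequality $s_a<\ell$: the weak bound is immediate from the gap comparison, but upgrading to strictness is exactly where the two distinct side conditions (``$\eta\in A_*$'' in Case C and ``$d_{k+1}>\ell$'' in Case D) enter essentially, by ensuring that $b$ still survives as a vacancy in $\alpha$ past stage $\ell$ (or is rendered moot by the trivial subcase $\ell>L-1$).
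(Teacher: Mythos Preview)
Your proof is correct and rests on the same gap--comparison mechanism as the paper, but it is organised differently. The paper proceeds by global contradiction: assuming $(\lnot 1)\wedge(\lnot 2)\wedge(\lnot 3)$, the absence of vacancies in $B^-$ forces the vacancy at $a$ to survive through stage $\ell-1$ in \emph{both} $\eta$ and $\eta^z$, which then forces $b$ to be removed by stage $\ell$ in both, contradicting either $\eta\in A_*$ (if $k=M$) or Lemma~\ref{lem:prelim}(iii) (if $k<M$). You instead run a constructive four-branch split, pick a single $\alpha\in\{\eta,\eta^z\}$ in which $b$ survives long, and exhibit the witness $w$ for the removal of $a$ as an explicit point of $B^-$. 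The two arguments are dual (``if $a$ survives long then $b$ is removed early'' versus ``since $b$ survives long, $a$ is removed early''); the paper's version is slightly more economical because it treats $\eta$ and $\eta^z$ simultaneously, while yours has the advantage of producing a concrete vacancy in $B^-$.

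One point in your write-up needs care. In Case~D you assert that ``$b$ remains a vacancy of $\Phi_g(\alpha)$ on a range containing $s_a-1$'', but a priori you only know $\Phi_g(\alpha)_b=0$ for $g\le d_{k+1}-1$, and nothing yet prevents $s_a$ from being large. The fix is immediate and uses exactly the argument you already invoked to rule out $a\in\partial_E\L$: if $s_a\ge d_{k+1}$ then $\Phi_{d_{k+1}-1}(\alpha)_a=0$, whence $g_b(\Phi_{d_{k+1}-1}(\alpha))\le\ell<d_{k+1}$, contradicting the fact that any surviving vacancy in $\Phi_{d_{k+1}-1}$ has gap at least $d_{k+1}$. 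Alternatively, and closer to the paper's phrasing, argue the whole core step by contradiction at stage $\ell-1$: if $s_a\ge\ell$ then $\Phi_{\ell-1}(\alpha)_a=0$ and (since $\ell-1<d_{k+1}-1$ in Case~D, resp.\ $\ell-1\le L-1$ in Case~C with $\ell\le L-1$) also $\Phi_{\ell-1}(\alpha)_b=0$, forcing $g_b(\Phi_{\ell-1}(\alpha))=\ell$ and hence $\Phi_\ell(\alpha)_b=1$, which contradicts the defining property of $\alpha$ in either case.
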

\begin{pf}
Fix $\h\in\partial A_*$, $z$ and $B$ satisfying the conditions of the lemma.
Suppose first that $a \in\partial_E \L$. If $k=M$, then
$v^*=u_M=u_k=b \in B$, thus implying the thesis. Suppose $k<M$. We know
that $\Phi_\ell( \eta)_b= \Phi_\ell(\eta^z)_b=1$ since $a \in
\partial
_E \L$. Hence, by property (iii) in Lemma~\ref{lem:prelim} (with
$i=k+1$), we have $\ell\geq d_{k+1}$. This implies that $u_{k+1} \in
B^+$, which gives rise to the thesis. Suppose now, for contradiction,
that $a\notin\partial_E \L$ and that the thesis is false, so we have:
\begin{longlist}[($\lnot1$)]
%
\item[($\lnot1$)] $v^*\notin B$ or $B \cap\partial_E \Lambda=
\varnothing$,
\item[($\lnot2$)] $\h_y = 1$ for all $y\in B^-$ (including the case
$B^-= \varnothing$),
\item[($\lnot3$)] either $k=M$ or ``$k\neq M$ and $u_{k+1}\notin B^+$.''
\end{longlist}
We will prove that these assumptions give rise to a contradiction with
the definitions of $(u_i)_{i=0}^M$, $(d_i)_{i=1}^M$.
Note that ($\lnot1$) holds since we assume $a \notin\partial_E \L$
so $B \cap\partial_E \Lambda= \varnothing$.

%
\begin{figure}

\includegraphics{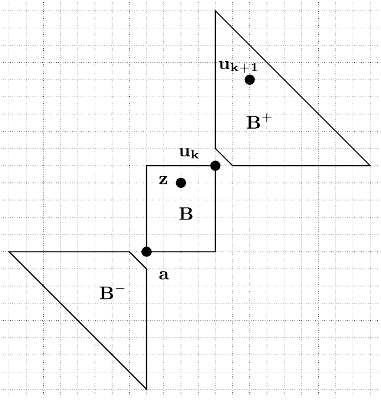}

\caption{The sets $B^-, B^+$ when far from the border of $\L$. We have
considered the case $u_{k+1} \in B^+$. In this example $\ell=9$.}
\label{fig:speriamo}
\end{figure}

First, we claim that the assumption $a\notin\partial_E \L$ together
with ($\lnot2$) implies $\ell< L$. To prove the claim, suppose that
$\ell\geq L$. Since $a \in\bar\L$ and $a\neq v^*$, we know that at
least one coordinate of $a$ is strictly less than $L$, so there exists
a $j\in\{1,\ldots,d\}$ such that $a_j < L$.
Now the point $r=(r_1,\ldots,r_d)$ defined by $r_i = a_i$ for all
$i\neq
j$ and $r_j = 0$ belongs to the East boundary $\partial_E \L$. Also, $r
\ll a$ and $\|r-a\|_1 = a_j < L \leq\ell$, so $r\in B^-$. This
contradicts assumption ($\lnot2$) above.

By ($\lnot2$) $\h_y = 1$ for all $y\in B^-$, so we have
$\Phi_{\ell-1}(\h)_a=\Phi_{\ell-1}(\h^z)_a=0$.
In particular, $g_b(\Phi_{\ell-1}(\h)),g_b(\Phi_{\ell-1}(\h
^z))\leq
\ell$ so $\Phi_{\ell}(\h)_b=\Phi_{\ell}(\h^z)_b=1$. This implies that
$\Phi_{L-1}(\h)_b=1$ hence $b \neq v^*$ (since $\h\in\partial A_*$).
So $b=u_k$ for some $k<M$ and there exists $u_{k+1}\gg u_k$ satisfying
$\|b-u_{k+1}\|_1=d_{k+1}<L$.
By ($\lnot3$) $u_{k+1}\notin B^+$ so that $d_{k+1}>\ell$, so by
monotonicity of the deterministic dynamics $\Phi_{d_{k+1}-1}(\eta)_b
\geq\Phi_\ell(\eta)_b =1$ and $\Phi_{d_{k+1}-1}(\eta^z)_b \geq
\Phi
_\ell(\eta^z)_b =1$. This implies that $\Phi_{d_{k+1}-1}(\eta^z)_b
=\Phi
_{d_{k+1}-1}(\eta)_b = 1$ which contradicts $b=u_k$ [see Lemma~\ref
{lem:prelim}(iii)].
\end{pf}
For any configuration $\eta\in\partial A_*$ the above Lemma~\ref{svegliadalle3} allows us to isolate a special subset of vacancies
of $\eta$. This special subset, in the sequel denoted by $\{
z_1,z_2,\ldots,z_S\}$,
will be defined iteratively by means of an algorithm which we now
describe. In what follows, it will be convenient to use the following
notation: given a box $\L$ and a site $x \in{\mathbb Z}^d \setminus
\L$, we
define $\L\star x$ as the minimal box containing both $\L$ and $x$.
The input of the algorithm is a pair $(\eta,z_0)$, where $\h
\in\partial A_*$ and $z_0 \in\L$ is such that $c^{\L, \mathrm {max}
}_{z_0}(\eta)=1$ and $\eta^{z_0} \notin A_*$. The output will be a
sequence $\{(z_i,\D_i)\}_{i=1}^S$, $S\ge n+1$ if the box $\L=[1,L]^d$
has side
$L=2^n$,
where $\{\D_i\}_{i=1}^S$ is a increasing sequence of boxes contained in
$\bar\L$
and $\{z_i\}_{i=1}^S\subset\bar\L$ contains exactly $S-1$ points in
$\L$ where $\eta$ is zero.
%
\begin{remark}
Necessarily $z_0\neq v^*$. Otherwise, the condition $c^{\L, \mathrm{ max}
}_{z_0}(\eta)=1$ would imply that the gap of the vacancy at $v^*$ is equal
to one and the latter would be removed at the first step of the
deterministic dynamics defining $A_*$. That would contradict the
property $\Phi_{L-1}(\eta)_{v^*}=0$.
\end{remark}
\begin{longlist}[(i)]
\item[\textit{Initial step}.]
Choose an arbitrary sequence of vertices $u_1,\ldots,u_M$ satisfying the
properties
described in Lemma~\ref{lem:prelim} for the pair $(\eta,z_0)$. Define
also $z_1$ to be the minimal element (in lexicographic order)
of the nonempty set $\{ z_0-e \dvtx \eta_{z_0-e} =0 , e \in\cB
\}$ and set $\D_1= \{z_0\} \star z_1$.
\item[\textit{The recursive step}.]
Suppose that $(z_1,\D_1), (z_2,\D_2), \ldots, (z_i,\D_i)$ has been
defined in such a
way that:
\begin{itemize}
\item for all $j\le i$, the set $\D_j$ is a box satisfying: (i)
$\D_j\subset\bar\L$, (ii) $z_0\in\D_j$ but it does not coincides
with the lower corner of $\D_j$ where $\eta$ has a vacancy, (iii) the
upper corner of $\D_j$
coincides with $u_{k_j}$ for some $k_j\in\{0,1,\ldots,M\}$.
\item$z_k\neq z_j$ for all $j\neq k$ and $\eta_{z_j}=0$ for all $j\le i$.
\end{itemize}
Let $\D_i^\pm$ be the two sets
defined in Lemma~\ref{svegliadalle3} for the box $\D_i$ and adopt the
convention that $\{u_{M+1}\}:= \varnothing$. \vskip0.3cm
\begin{itemize}
\item\textit{If} the upper corner of $\D_i$ is $v^*$
and the lower corner of $\D_i$ belongs to $\partial_E \L$ then
\textit{stop};
\item\textit{else}
\begin{itemize}
\item\textit{if} the lower corner of $\D_i$ is not in $\partial_E\L$,
define $z_{i+1}$ to be the
minimal element (in lexicographic order)
of the nonempty set $\{z \in\D_i^- \cup (\D_i^+\cap\{
u_{k_i+1} \}
 ) \dvtx \eta_z=0\}$ and set $\D_{i+1}:= \D_i \star z_{i+1}$;
\item\textit{else} define $z_{i+1}=u_{k_i+1}$ and set $\D_{i+1}:= \D_i
\star
u_{k_i+1}$;
\end{itemize}
\item\textit{Endif}
\end{itemize}
\end{longlist}
%
\begin{remark}
Note that in last case (i.e., upper corner $ \neq v^*$ and
lower corner
$\in\partial_E\L$), $k_i\neq M$ since $u_M=v^*$.
\end{remark}
Using Lemma~\ref{svegliadalle3}, it is simple to check by induction
that the above algorithm is well
posed, it always stops and that exactly $S-1$ points among
$z_1,\ldots,z_S$ belong to $\L$.

It is convenient to parametrize the points $z_1,\ldots,z_S$
as follows. Let $\D_0:= \{z_0\}$, let $\e_1=-1$ and set $\e_i=\pm1$ if
$z_i\in\D_{i-1}^\pm$, $i=2,\ldots,S$.
If $\{v^*(\D_i),v_*(\D_i)\}$ denote the upper and lower corner,
respectively, of the box $\D_i$, then by construction,
$z_i \ll v_*(\D_{i-1})$ if $\e_i=-1$ and $v^*(\D_{i-1}) \ll z_i$
otherwise. Finally, we define
\[
\xi_i:= \cases{ v_*(\D_{i-1})- z_i, & \quad $\mbox{if } \e_i =-1 $,\vspace*{2pt}
\cr
z_i-v^*(
\D_{i-1}), &\quad $\mbox{if } \e_i=+1 $,}\qquad 1\leq i \leq S.
\]
Note that each $\xi_i$ has nonnegative coordinates and $\xi_i \neq0$.
By the previous considerations and by the definition of the sets
$\D_i^\pm$ (cf. Lemma~\ref{svegliadalle3}), if
$\gamma_i:= \|\xi_i\|_1$ and $\ell_i:= \|\D_i\|_1$ then
%
\begin{equation}\qquad
\label{relazione} \gamma_1=\ell_1=1 ,\qquad
\ell_{i+1}=\ell_i+ \gamma_{i+1} ,\qquad 1\leq
\gamma_{i+1} \leq\ell_i\ \forall i=1,\ldots,S-1.
\end{equation}
From the above identities, we get $\gamma_{i+1}\le\sum_{j=1}^i
\gamma_j$ and
$\ell_{i+1} \leq2 \ell_i$, that is, $\ell_i \leq2^{i-1}$.
On the other hand, when the algorithm stops for $i=S$, the box $\D_S$
has at least one edge of length $L$.
That implies that $2^n = L\leq\|\D_S\|_1=\ell_S\le2^{S-1}$, that is,
$S\geq n+1$.

\subsubsection{Counting the number of possible outputs} We now focus on
bounding from above the number $\cZ$ of the possible $(n{+}1)$-tuples
$(z_1, \ldots, z_{n+1})$ that can be produced by the above algorithm.
As already discussed, the vertices $(z_1, \ldots, z_{n+1})$ are uniquely
specified by $z_0$, by the vectors $(\xi_1,
\ldots, \xi_{n+1})$ and by the variables $(\e_1, \ldots,
\e_{n+1})$. Clearly, $z_0$ and $(\e_1, \ldots,
\e_{n+1})$ can be chosen in at most $L^d\times2^n=2^{(d+1)n}$ ways.
To upper bound the number $\Xi$ of the possible $(n+1)$-tuples $( \xi_1,
\ldots, \xi_{n+1})$,
we first observe that, given the lengths $(\gamma_1, \ldots,
\gamma_{n+1})$, there are at most $  [(1+\gamma_1)(1+
\gamma_2)\cdots(1+\gamma_{n+1}) ]^{d-1}$ possible
$(n+1)$-tuples $( \xi_1,
\ldots, \xi_{n+1})$.
Since $\gamma_{i+1}\le\sum_{j=1}^i \gamma_j$, setting
\[
U(k):= \bigl\{ (x_1, x_2, \ldots, x_k)\in
\bbN^k \dvtx x_1=1 \mbox{ and } 1 \leq x_i
\leq x_1+ \cdots+ x_{i-1}\ \forall i\dvtx 2\leq i \leq k
\bigr\}
\]
and writing $\sum_{U(k)}(\cdot)$ for the sum
restricted to values in $U(k)$, we get
\begin{eqnarray*}
\Xi&\le&\sum_{U(n+1)} \bigl[(1+x_1) (1+
x_2)\cdots(1+x_{n+1}) \bigr]^{d-1}
\\
&\le& \sum_{U(n+1)}2^{(d-1)(n+1)} [x_1
x_2\cdots x_{n+1} ]^{d-1}
\\
&\le&2^{(2d-1)(n+1)} \frac{ 2^{ d {n\choose2} } }{n! d^n},
\end{eqnarray*}
where we used Claim~\ref{lem:int-bnd} below.
In conclusion,
%
\begin{equation}
\label{cardinality} \cZ\le2^{(d+1)n}\Xi\le2^{3d(n+1)} \frac{ 2^{ d {n\choose2} } }{n! d^n}.
\end{equation}
%
\begin{claim}\label{lem:int-bnd} The following holds:
\[
\sum_{U(n+1)}(x_1x_2\cdots
x_{n+1})^{d-1}\leq\frac{ 2^{ d {n\choose2} +dn } }{n! d^n}\qquad \forall n>1.
\]
\end{claim}
\begin{pf}
Setting $M_n:=\sum_{i=1}^n x_i$ and summing over $x_{n+1}$
gives the bound
%
\begin{equation}
\label{maratona1} \sum_{U(n+1)}(x_1x_2
\cdots x_{n+1})^{d-1}\leq\sum_{U(n)}
(x_1x_2\cdots x_{n})^{d-1}
\frac
{(M_{n}+1)^d }{d},
\end{equation}
where we used the bound
\[
\sum_{i=1}^n f(i) \le\int
_0^{n+1}\,dx\, f(x),
\]
valid for any nonnegative increasing function $f$.
Similarly,
%
\begin{eqnarray}
\label{maratona2} a(j,k)&:=& \sum_{ U(j)}
(x_1x_2\cdots x_{j})^{d-1}(M_{j}+1)^k
\nonumber
\\
&\leq&\sum_{U(j-1)} (x_1x_2
\cdots x_{j-1})^{d-1} \int_0^{M_{j-1}+1 }
\,d x_j (x_j +M_{j-1}+1 )^{k+d-1}
\nonumber
\\[-8pt]
\\[-8pt]
\nonumber
&\leq&\frac{2^{k+d }}{k+d} \sum_{U(j-1)}
(x_1x_2\cdots x_{j-1})^{d-1}
(M_{j-1}+1) ^{k+d}
\\
&=& \frac{2^{k+d }}{k+d} a(j-1, k+d).\nonumber
\end{eqnarray}
If we combine together \eqref{maratona1} and \eqref{maratona2}, we obtain
\begin{eqnarray*}
&&d \sum_{U(n+1)} (x_1x_2
\cdots x_{n+1})^{d-1} \\
&&\qquad\leq a(n,d)
\\
&&\qquad\leq \frac{ 2^{2d} }{2d} a(n-1,2d)\leq\frac{ 2^{2d}\cdot2^{3d} }{(2d)(3d)
} a(n-2,3d)
\\
&&\qquad\leq\cdots\leq \frac{ 2^{2d}\cdot2^{3d} \cdots2^{nd} }{(2d)(3d)\cdots(nd) } a(1,nd)\leq\frac{ 2^{ d
{n\choose2} +d(n-1)} }{n! d^{n-1}}.
\end{eqnarray*}
\upqed\end{pf}
%
\subsection{Conclusion}
By the arguments above, we know that
\[
\partial A_* \subset \bigl\{\h\in\O_\L\dvtx \exists(z_1,
\ldots,z_{n+1})\in \G _\L^{(n)} \mbox{ with }
\h_{z_1}=\cdots=\h_{z_{n+1}} = 0 \bigr\} ,
\]
where $\G_\L^{(n)}$ consists of \emph{all} possible $(n+1)$-tuples
$(z_1,\ldots,z_{n+1})$ in $\bar\L$ which can be obtained by applying the
algorithm above to a pair $(\eta,z_0)$ satisfying $\h
\in\partial A_*$, $c^{\L, \mathrm{ max}
}_{z_0}(\eta)=1$ and $\eta^{z_0} \notin A_*$. Note that the set
$\G_\L^{(n)}$ has cardinality $\cZ$.
Thus, using~\eqref{cardinality} together with $n\leq\theta_q/d$,
\[
\pi(\partial A_*) \leq q^{n} \cZ\leq2^{-n\theta_q}2^{3d(n+1)}
\frac{ 2^{ d {n\choose2} } }{n! d^n} = \frac{ 2^{-n \theta_q+ d
{n\choose2}+O(\theta_q) } }{n! d^n}.
\]
By applying the trivial bound $\cD^\mathrm{ max}_\L(\1_{A_*}) \leq
L^d\pi(\partial A_*)$ (cf. Section~\ref{bottleneck}) we immediately get
\eqref{eq:dirbndbis}.
Finally, recall that $\mathds10\in A_*$ and $\mathds1 \notin A_*$. Thus,
\begin{eqnarray*}
\pi(A_*) &\geq&\pi(\mathds{1} 0 ) \geq q (1-q)^{L^d}\ge q/2,
\\
\pi \bigl(A^c_* \bigr)&\geq&\pi( \eta_{v^*}=1)=p \geq1/2,
\end{eqnarray*}
for $q$ sufficiently small (here the restriction $n\le\theta_q/d$ is crucial).
This completes the proof of the Theorem~\ref{pupi}.

\section{Proof of Theorem \texorpdfstring{\protect\ref{th:main1}}{1}}
\label{sec:5}

\subsection{Lower bound}

Recall that $\theta_q=\log_2(1/q)$ and let
$L_c= \lfloor2^{\theta_q/d}\rfloor$. Lem\-ma~\ref{lem:ineq} together
with \eqref{biancaneve} and Theorem~\ref{pupi} imply a more refined lower bound of the form
\[
\trel^\mathrm{ max}(L_c;q) \ge2^{{\theta_q^2}/{(2d)} + ({\theta_q}/{d})\log_2
\theta_q + O(\theta_q)}.
\]
%
Therefore, the $o(1)$ term in $\trel({\mathbb Z}^d;q)\ge2^{
{\theta
_q^2}/{(2d)}(1+o(1))}$ is $\O((\log_2 \theta_q)/\theta_q)$ (see Remark~\ref
{controllo_errori}).
Using \eqref{eq:East}, the RHS above can also be rewritten as $\trel
({\mathbb Z};\break q)^{({1}/d) (1+o(1))}$.


\subsection{Upper bound}\label{lagustelli}
We upper bound the relaxation time $\trel({\mathbb Z}^d;q)$ by a
renormalization procedure
based on the following result.
%
\begin{lemma}
\label{blok-versus-ss} Fixed $\ell\in\bbN$, set $q^*=1-(1-q)^{\ell
^d}$. Then for any $q \in(0,1)$
%
\begin{equation}
\label{eq:5} \trel \bigl({\mathbb Z}^d;q \bigr)\le
\kappa_d\trel^\mathrm{ min}(3\ell;q) \trel \bigl({\mathbb
Z}^d; q^* \bigr)
\end{equation}
for some constant $\kappa_d$ depending only on the dimension $d$.
\end{lemma}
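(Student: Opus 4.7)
The plan uses Proposition \ref{prop-block} applied to the East-like block process of block side $\ell$ from Definition \ref{Eblock}. That proposition identifies the spectral gap of the block dynamics with $\gap(\cL;q^*)$, $q^*=1-(1-q)^{\ell^d}$, so that for any $f$
\[
\var(f) \le \trel(\bbZ^d;q^*)\,\cD_{\rm block}(f), \qquad \cD_{\rm block}(f) = \sum_{x\in\bbZ^d}\pi\bigl(c^*_x\,\var_{\L_\ell(x)}(f)\bigr),
\]
where $c^*_x(\eta)=1$ iff some past-neighbour block $\L_\ell(x-e)$, $e\in\cB$, contains a vacancy in $\eta$. By the Rayleigh--Ritz variational principle for $\trel(\bbZ^d;q)$, the proof will therefore reduce to the Dirichlet-form comparison
\[
\cD_{\rm block}(f) \le \kappa_d\,\trel^{\rm min}(3\ell;q)\,\cD(f).
\]

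To establish this comparison I would bound each individual block term $\pi\bigl(c^*_x\var_{\L_\ell(x)}(f)\bigr)$ by a sum of East-like Dirichlet-form terms over a carefully chosen box $\L_1(x)$ of side at most $3\ell$ containing $\L_\ell(x)$ together with its past-neighbour blocks. On the event $\{c^*_x=1\}$ there is a vacancy in some $\L_\ell(x-e)$ that can play the role of an ergodic (indeed minimal) seed; combining the enlargement trick (Lemma \ref{lem:enlarge}) with the monotonicity of the relaxation time (Lemma \ref{lem:ineq}) should then yield
\[
\pi\bigl(c^*_x\,\var_{\L_\ell(x)}(f)\bigr) \le C_d\,\trel^{\rm min}(3\ell;q)\sum_{y\in\L_1(x)}\pi(c_y\,\var_y(f)).
\]
Summation over $x\in\bbZ^d$, together with the bounded-overlap property $|\{x:y\in\L_1(x)\}|=O_d(1)$ for every site $y$, would then produce the dimensional constant $\kappa_d$ and, combined with the first display, complete the proof.

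The principal technical difficulty is geometric. Lemma \ref{lem:enlarge} demands the seed box $\L_3$ to lie \emph{strictly} below $\L_2=\L_\ell(x)$ in every coordinate, whereas the natural candidate $\L_\ell(x-e_k)$ is strictly below $\L_\ell(x)$ only along the $k$-th coordinate --- for $d\ge 2$ in fact no point of $\L_\ell(x-e_k)$ is componentwise below $\L_\ell(x)$. The workaround is to decompose $c^*_x\le\sum_{e\in\cB}\chi^{(e)}_x$ and, for each direction $e=e_k$, to insert an auxiliary short East-like flow inside a sufficiently enlarged rectangular box $\L_1^{(k)}$ of side $3\ell$ which effectively transports the role of the original seed in $\L_\ell(x-e_k)$ to a sub-box $\L_3^{(k)}\subset\L_1^{(k)}$ lying strictly below $\L_\ell(x)$, so that Lemma \ref{lem:enlarge} can be applied in the setting it requires. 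The extra $\ell$-layer of room needed to make this transport quantitative, with control of the combinatorial constants, is precisely the reason the side-length entering the final bound is $3\ell$ rather than the naive $2\ell$.
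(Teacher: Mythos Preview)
Your high-level plan is the same as the paper's: reduce to the East-like block process via Proposition~\ref{prop-block}, then compare the block Dirichlet form with the East-like one using the enlargement trick. You also correctly isolate the real obstacle, namely that for $d\ge 2$ the past-neighbour block $\L_\ell(x-e_k)$ is \emph{not} componentwise below $\L_\ell(x)$, so Lemma~\ref{lem:enlarge} cannot be applied as stated.

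Where your proposal falls short is precisely at the workaround. ``Inserting an auxiliary short East-like flow'' that ``transports the role of the seed'' to a sub-box strictly below $\L_\ell(x)$ cannot be made to work in the oriented model: a vacancy at $\xi\in\L_\ell(x-e_k)$ has $\xi_i\ge \ell x_i+1$ for every $i\neq k$, and the East-like dynamics only propagates influence in the direction of \emph{increasing} coordinates. Hence no East-like path seeded at $\xi$ can ever produce a vacancy at a point with $i$-th coordinate $\le \ell x_i$ for $i\neq k$, which is exactly what your $\L_3^{(k)}$ would require. Equivalently, the box $\L_\xi$ built from such a seed does not contain $\L_\ell(x)$, so the convexity step in the proof of Lemma~\ref{lem:enlarge} fails.

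The paper resolves this not by repairing the enlargement trick but by replacing the East-like block constraint with a different one. It introduces the \emph{Knight chain} (Definition~\ref{Knight Chain}): the block at $x$ is resampled provided some block $\L_\ell(y)$ with $y\prec x$ (meaning $y_i=x_i-1$ for $i\neq j$ and $y_j=x_j-2$ for some $j$) contains a vacancy. With this constraint every admissible seed block $\L_\ell(y)$ \emph{is} strictly below $\L_\ell(x)$ in all coordinates, so Lemma~\ref{lem:enlarge} applies directly inside a box of side $3\ell$. The price is that the Knight constraint is not the block East-like constraint; the paper pays it by observing that the Knight graph on $\bbZ^d$ decomposes into $d+1$ disjoint copies of $\bbZ^d$, so the Knight chain is a product of $d+1$ independent East-like block processes and hence has the \emph{same} relaxation time $\trel(\cL_{\rm block})=\trel(\bbZ^d;q^*)$. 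This decoupling-into-sublattices step is the idea your sketch is missing.
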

We postpone the proof to the end of the section and explain how to
conclude. First, we note that,
since $q^*= \pi( \exists x \in\L_\ell\dvtx \eta_x=0)$, the Bonferroni
inequalities
(cf., e.g., \cite{durrett}) imply that
%
\begin{equation}
\label{eq:Bonfe} q \ell^d/2\le q^*\le q \ell^d\qquad\mbox{for
} q\ell^d\le1.
\end{equation}
We will now use Theorem~\ref{th:main2} together with \eqref{eq:5} to
prove inductively the required upper bound on $\trel({\mathbb Z}^d;q)$ as
$q\downarrow0$. Fix $d>1$.
We already know [cf. \eqref{eq:1}] that $\trel({\mathbb Z}^d;q)\le
\trel({\mathbb Z};q)$ so that, using \eqref{eq:East},
\[
\trel \bigl({\mathbb Z}^d;q \bigr)\le 2^{{\theta_q^2}/{2}+\theta_q\log_2 \theta_q + \gamma_0 \theta
_q+\a_0},
\]
for some constants $\gamma_0,\a_0>0$ and any $q\in(0,1)$. Assume now that,
for some $\lambda\in
(1/d, 1]$ and $\gamma,\a>0$, the following bound holds for all $q \in(0,1)$:
%
\begin{equation}
\label{eq:6} \trel \bigl({\mathbb Z}^d;q \bigr)
\le2^{\lambda({\theta_q^2}/{2})+\theta
_q\log_2
\theta_q +
\gamma\theta_q+\a}.
\end{equation}
Choose the free parameter $\ell$ in \eqref{eq:5} of the form $\ell
=2^n$ with
$1\le n\le\theta_q/d$. With this
choice and using \eqref{eq:Bonfe}, we get
\begin{equation}
\label{laurea} \theta_{q^*}\le\theta_q -nd + 1\le
\theta_q.
\end{equation}
Using \eqref{eq:5} and
\eqref{eq:6} together with Theorem~\ref{th:main2} to bound from above
the term $\trel^\mathrm{ min}(3\ell;q)$ for all $q \in(0,1)$ by
\[
\trel^\mathrm{ min}(3\ell;q) \le2^{n\theta_q - {n^2}/{2}
+n\log_2 n + \b\theta_q+\rho}
\]
for some constants $\b,\rho>0$ independent of $n$, we get
%
\begin{eqnarray}\label{arance}
\trel \bigl({\mathbb Z}^d;q \bigr)&\le&\kappa_d
2^{n\theta_q - {n^2}/{2}
+n\log_2 n + \beta\theta_q + ({\lambda}/2) \theta^2_{q^*}+\theta
_{q^*}\log_2 \theta_{q^*} +
\gamma\theta_{q^*}+\a+\rho}
\nonumber
\\[-8pt]
\\[-8pt]
\nonumber
&\le&\kappa_d 2^{n\theta_q - {n^2}/{2}
+ ({\lambda}/2) \theta^2_{q^*}+\theta_{q}\log_2 \theta_{q} +
(\gamma+\b) \theta_{q}+\alpha+\rho}. 
\end{eqnarray}
Above we used that
\[
n\log_2 n + \theta_{q^*}\log_2
\theta_{q^*} + \gamma\theta_{q^*} \leq(n+
\theta_{q^*}) \log_2 \theta_q + \gamma
\theta_{q^*}\leq \theta_q \log_2
\theta_q + \gamma\theta_q,
\]
where the first inequality follows from $\max\{ n,
\theta_{q^*}\}\le\theta_q$ and the latter from \eqref{laurea}.
Using again \eqref{laurea}, we can bound
%
\begin{eqnarray}
\label{limoni} n\theta_q - \frac{n^2}{2} + \frac{\lambda}2
\theta^2_{q^*} &\leq& n\theta_q -
\frac{n^2}{2} + \frac{\lambda}2 ( \theta_q -nd +
1)^2
\nonumber
\\
&=& \frac{n^2}{2} \bigl(d^2 \lambda-1 \bigr) -n \bigl(
\theta_q (\lambda d -1) +\lambda d \bigr) + \frac{\lambda}{2} (
\theta_q +1)^2 \\
&=:& \frac{n^2}{2}A- nB+C.\nonumber
\end{eqnarray}
Note that $A,B>0$.
We now optimize over $n$ and
choose it equal to $n_c=\lfloor B/A\rfloor$, that is,
\[
n_c= \biggl\lfloor\frac{ \theta_q (\lambda d -1)+\lambda d
}{d^2\lambda-1} \biggr\rfloor.
\]
Since $\frac{n_c^2}{2}A- n_cB+C \leq- \frac{B^2}{2A}+B+C$, 
from \eqref{arance} and \eqref{limoni} we derive that
\begin{eqnarray*}
\trel \bigl({\mathbb Z}^d;q \bigr)&\le&\kappa_d
2^{ - ({ [ \theta_q(\lambda d
-1)+\lambda d]^2
}/{(2(d^2\lambda-1))} )}\\
&&{}\times 2^{ \theta_q (\lambda d -1) +\lambda d +
({\lambda}/{2}) (\theta
_q+1)^2+ \theta_{q}\log_2 \theta_{q} +
(\gamma+\b) \theta_{q} +\a+\rho.
}
\end{eqnarray*}
Hence, using that $\lambda\in(1/d,1]$, we conclude that for any $q
\in(0,1)$
\[
\trel \bigl({\mathbb Z}^d;q \bigr) \le 2^{({\theta_q^2}/{2})\lambda_1 +\theta_{q}\log_2 \theta_{q} +
\gamma_1\theta_{q}+\a_1},
\]
where $\lambda_1= \frac{2d\lambda-1-\lambda}{d^2\lambda-1}$,
$\gamma_1=\gamma+\b+ d$ and $\a_1= \a
+\rho+ d + 1 + \log_2\kappa_d $.

We interpret the above as a three-dimensional dynamical system in the
running coefficients $(\lambda,\gamma, \a)$. Let $(\lambda_k,\gamma
_k, \a_k)$ be the
constants obtained after $k$ iterations
of the above mapping starting from $\lambda_0=1$, $\gamma_0$, $\a
_0$. Clearly,
$\gamma_k,\a_k=O(k)$. As far as $\lambda_k$ is concerned, it is easy
to check
that the sequence is decreasing
under recursive application of the map
\[
(1/d, 1] \ni\lambda\mapsto\frac{2d\lambda-1-\lambda}{d^2\lambda
-1} \in(1/d, 1]
\]
and it has an
attractive quadratic fixed point at $\lambda_c=1/d$. Thus, $\lambda_k=
\lambda_c+O(k^{-1})$.
Choosing $k=\lfloor\theta_q^{1/2}\rfloor$, we then
we get (in agreement with Remark~\ref{controllo_errori})
\[
\trel \bigl({\mathbb Z}^d;q \bigr) \le2^{\lambda_k({\theta_q^2}/{2}) +
\theta
_q\log_2 \theta_q
+\gamma_k \theta_q+\a_k}
=2^{{\theta_q^2}/{(2d)}+O(\theta_q^{3/2})}.
\]
%

\begin{pf*}{Proof of Lemma~\ref{blok-versus-ss}}
Consider the East-like block process defined in
Section~\ref{block process} (cf. Definition~\ref{Eblock}).
Due to Proposition~\ref{prop-block}, it is enough to prove for any
$\ell\in\bbN$ and $q \in(0,1)$ that
%
\begin{equation}
\label{eq:4} \trel \bigl({\mathbb Z}^d;q \bigr)\le
\kappa_d \trel^\mathrm{ min}(3\ell;q) \trel (\cL
_\mathrm{ block}).
\end{equation}
In order to prove the above bound, we need to define another auxiliary chain.
%
\begin{definition}[(The Knight chain)]
\label{Knight Chain} On the vertex set $V:={\mathbb Z}^d$ define the
following graph
structure $G=(V,E)$. Given two vertices $x=(x_1,\ldots,x_d)$ and
$y=(y_1,\ldots,y_d)$
we write $y\prec x$ if there exists $j\in\{1,\ldots,d\}$ such that
$y_i=x_i-1, \forall i\neq j$ and $y_j=x_j-2$. Then we define
the edge set $E$ as those pairs of vertices $(x,y)$ such that either
$y\prec x$ or $x\prec y$.
It is easy to see that $G$ is the union of $d+1$ disjoint subgraphs
$G^{(i)}=(V^{(i)},E^{(i)})$, each one isomorphic to the original
lattice ${\mathbb Z}^d$ (cf. Figure~\ref{fig:test}).

The \emph{Knight chain} $KC(\ell)$ with parameter $\ell\in\bbN$ is
then defined very similarly to the
East-like block process (cf. Definition~\ref{Eblock}) except that the
constraint is tailored to the graph $G$.
Partition ${\mathbb Z}^d$ into blocks of the form
$\L_\ell(x):=\L_\ell+ \ell x$, $x\in{\mathbb Z}^d$, where $\L
_\ell=[1,
\ell]^d$.
On $\O=\{0,1\}^{{\mathbb Z}^d}$ define
the Markov process which, with rate one and independently among the
blocks $\L_\ell(x)$, resamples from $\pi_{\L_\ell(x)}$ the
configuration in the
block $\L_\ell(x)$ provided that the Knight-constraint $c_x^{(\mathrm{kc})}$ is
satisfied, where $c_x^{(\mathrm{ kc})}$ is the indicator of the event that
for some $y\prec x$, the
current configuration in the block
$\L_\ell(y)$ contains a vacancy.
\end{definition}
%
%
\begin{figure}
\centering
\begin{tabular}{@{}cc@{}}

\includegraphics{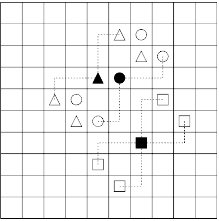}
 & \includegraphics{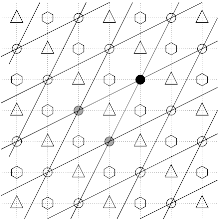}\\
\footnotesize{(a)} & \footnotesize{(b)}
\end{tabular}
\caption{\textup{(a)} The blocks forming the underlying grid
are unit squares centered at the vertices of the lattice
${\mathbb Z}^2$. The four blocks containing the white squares
are the
neighbors of the block with the black square; similarly, for the
blocks containing the white circles and the white
triangles. \textup{(b)} A larger image showing the vertices of
$\{G^{(i)}\}_{i=1}^3$ and the edges of $G^{(1)}$.}
\label{fig:test}
\end{figure}

Because of the structure of the graph $G$, the chain $KC(\ell)$ is a
product chain, one for each subgraph $G^{(i)}$, $i=1,\ldots,d+1$, in
which each factor is
isomorphic to the East-like block process. Hence, its relaxation time
$\trel(KC(\ell))$ coincides with that of the East-like block process
$\trel(\cL_\mathrm{ block})$.
We can therefore write the Poincar\'e inequality
\[
\var(f)\le\trel(\cL_\mathrm{ block})\sum_{i=1}^{d+1}
\sum_{x\in
V^{(i)}}\pi \bigl(c^{(\mathrm{ kc})}_x
\var_{\L_\ell(x)}(f) \bigr)\qquad \forall f\in L^2(\pi).
\]
Using the enlargement trick (cf. Lemma~\ref{lem:enlarge}) together with
Lemma~\ref{lem:ineq}, we get that
\[
\pi \bigl(c^{(\mathrm{ kc})}_x\var_{\L_\ell(x)}(f) \bigr)\le
\trel^\mathrm{ min}(3\ell; q )\sum_{z\in\L_{3\ell}+\ell x'}\pi
\bigl( c_z \var_z(f) \bigr),
\]
where $x'_i=x_i-2$ for all $i=1,\ldots, d$. Therefore,
\[
\var(f)\le\kappa_d\trel^\mathrm{ min}(3\ell; q )\trel(
\cL_\mathrm{ block}) \sum_{x\in{\mathbb Z}^d}\pi
\bigl(c_x\var_x(f) \bigr),
\]
for some constant $\kappa_d$ depending only on the dimension $d$. By
definition, the latter implies \eqref{eq:4}.
\end{pf*}

%
%

\section{Proof of Theorem \texorpdfstring{\protect\ref{th:main2}}{2}}
\label{sec:6}
Without loss of generality, due to Lemma~\ref{lem:ineq} and since
$n(q) \to\infty$,
in the proof of \eqref{eq:2} and \eqref{eq:3}
we fix the side $L$ of
$\L$ equal to $2^n$.

\subsection{Maximal boundary conditions}

\subsubsection{Upper bound in (\texorpdfstring{\protect\ref{eq:2}}{2.8})}
If $n\ge\theta_q/d$, we can use Lemma~\ref{lem:ineq}
together with Theorem~\ref{th:main1} to get
\[
\trel^\mathrm{ max}(L;q)\le \trel \bigl({\mathbb Z}^d;q
\bigr)=2^{({\theta_q^2}/{(2d)})(1+o(1))}.
\]
For $n\le\theta_q/d$, we proceed as in the proof of the upper
bound in Theorem~\ref{th:main1}. Without loss of generality, we can
assume $d\ge2$ since the result was proved in \cite{CFM}, Theorem~2,
for $d=1$.

Fix $\ell=2^m$ with $m<n$, let $J\equiv J_{\ell,L}= [0,L/\ell-1]^d$
and for $x\in J_{\ell,L}$ let $\L_\ell(x)=[1,\ell]^d+\ell x$.
Then we have the analog of Lemma~\ref{blok-versus-ss}.
%
\begin{lemma}
\label{blok-versus-ss-bis}
Setting
$q^*=1-(1-q)^{\ell^d}$, we have
%
\begin{equation}
\label{eq:tosse} \trel^\mathrm{ max}(L;q)\le\kappa_d
\trel^\mathrm{ min}(3\ell;q) \trel^\mathrm{ max} \bigl(L/\ell;q^*
\bigr)
\end{equation}
for some constant $\kappa_d$ depending only on the dimension $d$.
\end{lemma}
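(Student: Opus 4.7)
The plan is to mimic the proof of Lemma \ref{blok-versus-ss} in the finite-volume setting with maximal boundary conditions on $\L = [1, L]^d$. I partition $\L$ into $|J|$ disjoint blocks $\L_\ell(x) := [1,\ell]^d + \ell x$, $x \in J$, and consider the East-like block process on $J$ with maximal boundary conditions, where virtual blocks $\L_\ell(x-e)$ for $x - e \notin J$ are declared to contain a vacancy (consistent with the maximal boundary of $\L$). By the finite-volume version of Proposition \ref{prop-block}, whose proof is exactly the one already written in the text for the cube $V_L$ under maximal bc, the relaxation time of this block process equals $\trel^{\rm max}(L/\ell; q^*)$. This gives the Poincar\'e inequality
\[
\var_\L(f) \le \trel^{\rm max}(L/\ell; q^*) \sum_{x \in J} \pi\bigl(c^{*,{\rm max}}_x\, \var_{\L_\ell(x)}(f)\bigr),
\]
where $c^{*,{\rm max}}_x$ denotes the block constraint with maximal bc.

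Then, exactly as in the infinite-volume argument, I bound each term in the above sum via the enlargement trick (Lemma \ref{lem:enlarge}). One way to proceed is to majorize the block constraint pointwise by $c^{*,{\rm max}}_x \le \sum_{e \in \cB} \chi_{x,e}$, where $\chi_{x,e}$ is the indicator that either $x - e \notin J$ or the block $\L_\ell(x-e)$ has a vacancy, and to apply Lemma \ref{lem:enlarge} separately for each $e \in \cB$; alternatively, as in the original proof, one may first pass through the finite-volume Knight chain decomposition to get $d + 1$ product factors. Either way, the enlarged box $\L_1$ has $\|\L_1\|_1 \le 3\ell$, so the trick yields
\[
\pi\bigl(\chi_{x,e}\, \var_{\L_\ell(x)}(f)\bigr) \le \trel^{\rm min}(3\ell; q) \sum_{z \in \L_1} \pi\bigl(c_z^{\L,{\rm max}}\, \var_z(f)\bigr).
\]
Summing over $x \in J$ and $e \in \cB$, and using that each $z \in \L$ is covered only $O_d(1)$ times, one arrives at
\[
\var_\L(f) \le \kappa_d\, \trel^{\rm min}(3\ell; q)\, \trel^{\rm max}(L/\ell; q^*)\, \cD^{\rm max}_\L(f),
\]
which, by the variational characterization of the relaxation time, is equivalent to \eqref{eq:tosse}.

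The delicate point, and the main obstacle, is the treatment of the blocks $x$ on the boundary of $J$, for which some neighbor $x - e$ lies outside $J$ and the constraint $c^{*,{\rm max}}_x$ is automatically satisfied by the maximal bc on $\L$. For these blocks the enlargement trick has to be applied with the auxiliary box $\L_1$ extending slightly outside $\L$ into $\partial_E \L$, the vacancy required in $\L_3$ being supplied by the maximal bc of the East-like process itself. One then invokes the monotonicity Lemma \ref{lem:ineq} to absorb the resulting minor geometric differences into a single constant $\trel^{\rm min}(3\ell; q)$. Everything else is a straightforward adaptation of the infinite-volume proof.
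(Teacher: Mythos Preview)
Your first step is fine, and in fact slightly more direct than the paper's: using the East-like block process on $J$ with maximal boundary conditions and identifying its relaxation time with $\trel^{\rm max}(L/\ell;q^*)$ via the finite-volume version of Proposition~\ref{prop-block} is correct.

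The gap is in the second step, option~(a). Lemma~\ref{lem:enlarge} requires the vacancy region $\L_3$ to lie \emph{strictly below} $\L_2$ in \emph{every} coordinate: in its proof the auxiliary box $\L_\xi=[\xi_1+1,c_1]\times\prod_{i\ge 2}[\xi_i,c_i]$ must contain $\L_2=\L_\ell(x)$, which forces $\xi_i<b_i$ for all $i$. For your indicator $\chi_{x,e}$ the vacancy $\xi$ lives in $\L_\ell(x-e)$, which is below $\L_\ell(x)$ only in the $e$-coordinate; in the remaining $d-1$ coordinates $\xi$ ranges over the same interval as $\L_\ell(x)$, so in general $\L_\xi\not\supset\L_\ell(x)$ and the enlargement trick does not apply. (In $d=1$ it does, with a $2\ell$-interval; the $3\ell$ and the extra machinery are needed precisely for $d\ge 2$.)

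This is exactly why the paper uses the Knight chain: a Knight neighbour $y\prec x$ has $y_i\le x_i-1$ for \emph{all} $i$, so $\bigcup_{y\prec x}\L_\ell(y)$ sits inside a box strictly below $\L_\ell(x)$ in every coordinate, making Lemma~\ref{lem:enlarge} applicable with a $(3\ell)^d$-box. The Knight chain is therefore not an optional alternative to option~(a) but the device that makes the enlargement step go through. Note also that it has to enter at the Poincar\'e step, not afterwards: one writes $\var_\L(f)\le \trel(\cL_{\rm KC,J})\sum_x\pi(\hat c_x\var_{\L_\ell(x)}(f))$, bounds each summand via Lemma~\ref{lem:enlarge}, and only then controls $\trel(\cL_{\rm KC,J})$ by factoring into $d+1$ chains on the subsets $V^{(i)}\cap J$, applying Proposition~\ref{prop-block} and finally Lemma~\ref{lem:ineq} to dominate each factor by $\trel^{\rm max}(L/\ell;q^*)$. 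Your handling of boundary blocks and the overall bookkeeping are otherwise sound.
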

\begin{pf}
We sketch the proof, which is essentially the same as the proof of Lemma~\ref{blok-versus-ss} apart boundary effects. Recall the notation
introduced in Definition~\ref{Knight Chain} (in particular, the partial
order $y\prec x$) and define the finite-volume Knight chain on $\O_\L$
as the Markov chain with generator
\[
\cL_\mathrm{ KC,J} f(\eta):= \sum_{x \in J}
\hat c_x (\eta) \bigl[ \pi _{\L
_\ell(x) } (f) -f \bigr](\eta) ,
\]
where $\hat c_x (\eta)$ is the characteristic function that there
exists $y \in{\mathbb Z}^d$ with $y \prec x$ such that $\eta$ has a vacancy
in $\L_{\ell} (y)$ (we extend $\eta$ as zero
outside $\L$).
Using the enlargement trick (cf. Lemma~\ref{lem:enlarge}) as
in the proof of Lemma~\ref{blok-versus-ss}, we get
%
\begin{equation}
\label{agretti} \var(f)\le\kappa_d\trel^\mathrm{ min}(3\ell;
q ) \trel(\cL_\mathrm{ KC,J} ) \sum_{z\in\L}\pi
\bigl(c^{\L,\mathrm{ max} }_z\var_z(f) \bigr),
\end{equation}
for some constant $\kappa_d$ depending only on the dimension $d$.
Above $\trel(\cL_\mathrm{ KC,J} )$ denotes the relaxation time of the
finite--volume Knight chain. Since this chain
is a product of
$d+1$ independent Markov chains, each one with generator
\[
\cL^{(i)}_\mathrm{ KC,J} f(\eta):= \sum
_{x \in V^{(i)}\cap J} \hat c_x (\eta) \bigl[ \pi_{\L_\ell(x) }
(f) -f \bigr](\eta) ,
\]
it follows that $\gap ( \cL_\mathrm{ KC,J} ) = \min \{
\gap
 (\cL^{(i)}_\mathrm{ KC,J} ) \dvtx 1\leq i \leq d+1 \}$. On
the other hand, by Proposition~\ref{prop-block},\hskip.2pt\footnote{Although the
proposition is stated for ${\mathbb Z}^d$, the same proof works in the
present setting.} $\gap (\cL^{(i)}_\mathrm{ KC,J} )= \gap
 ( \cL^{(i)}; q^*)$ where $\cL^{(i)}$
is the generator of the East-like process on $V^{(i) }\cap J$,
thought of as subgraph of $G^{(i)}= (V^{(i)}, E^{(i)})$, with maximal
boundary condition
\[
\cL^{(i)} f(\s) = \sum_{x \in V^{(i)}\cap J }
c^{(i)}_x (\s) \bigl[ \pi_x (f) - f \bigr] (\s)
, \qquad\s\in\{0,1\} ^{V^{(i) }\cap
J} ,
\]
$c^{(i)}_x (\s)$
being the characteristic function that $\s$ has a vacancy at some $y
\prec x$, $y \in V^{(i)}$ (set $\s\equiv0$ on $V^{(i)}\setminus J$).

We now observe that the set $V^{(i)}\cap J$, endowed with the graph
structure induced by $G^{(i)}$, is isomorphic to a subset $A^{(i)}$ of
$[1,L/\ell]^d$ (see Figure~\ref{fig:Grecia}).

%
\begin{figure}

\includegraphics{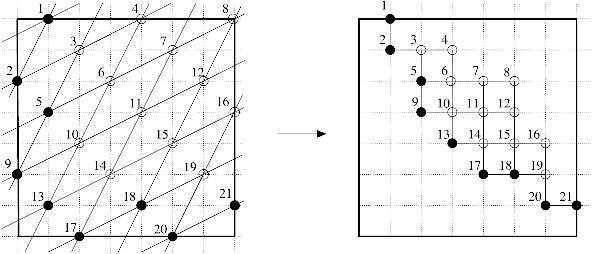}

\caption{Left: The square represents $J=[0, L/\ell-1]^d$ with $d=2$,
$L/\ell= 2^3$. Circles
mark points in $J \cap V^{(i)}$, where the index $i$ is such that
$(1,1) \in V^{(i)}$, and transversal lines give the edges induced by
$G^{(i)}$. Black circles mark points $x \in J \cap V^{(i)}$ with $c_x
^{(i)} \equiv1$ (constraint always fulfilled). Right: Circles mark
points in $A^{(i)}$, the black ones correspond to points with fulfilled
constraint. The isomorphism maps marked points on the left to marked
points on the right maintaining the enumeration.}
\label{fig:Grecia}
\end{figure}

Using this isomorphism,
the process generated by $\cL^{(i)}$ can be identified with the
East-like process on
$\O_{A^{(i)}}$ with maximal boundary conditions and parameter $q^*$.
By the same arguments leading to \eqref{eq:degrado} in Lemma~\ref{lem:ineq}, we get that $\trel(\cL^{(i)};q^*)\le\trel^\mathrm{
max}(L/\ell;q^*)$ and, therefore, the same upper bound holds for
$\trel
(\cL_\mathrm{ KC,J} )$.
\end{pf}
By \eqref{eq:3} we know that, for some positive constants $\a,\bar\a$
and for all integer $r \leq\theta_q$ it holds
%
\begin{equation}
\label{cane_bello} \trel^\mathrm{ max} \bigl(2^r;q \bigr) \leq
\trel^\mathrm{ min} \bigl(3 \cdot2^r;q \bigr)\leq
2^{r\theta_q - {r^2}/{2}+r\log_2 r + \a\theta_q+ \bar\a}.
\end{equation}
Given positive constants
$\lambda, \b, \bar\b$, we say that property $P(\lambda, \b, \bar
\b)$ is
satisfied if
\begin{equation}\quad
\label{gattino} \trel^\mathrm{ max} \bigl(2^n;q \bigr) \leq
2^{n\theta_q - \lambda({n^2}/{2})+n\log_2 n +\b\theta_q+ \bar
\b} \qquad \forall q \in(0,1),\forall n \leq\theta_q/d.
\end{equation}
Note that, due to \eqref{cane_bello}, property $P(1, \a, \bar\a)$ is
satisfied. The following result is at the basis of the renormalization
procedure.
%
\begin{lemma}\label{nuvoloni} If property $P(\lambda, \b, \bar\b)$ is
satisfied with $\lambda\leq d$, then also property $P(\lambda', \b',
\bar\b')$
is satisfied, where
$ \lambda' := \frac{d^2-\lambda}{2d-\lambda-1}\leq d$,
$ \b':=\a+ \b+1$ and
$ \bar\b':=\bar\a+ \bar\b+ d$.
\end{lemma}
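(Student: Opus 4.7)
The plan is to apply the block--decomposition inequality \eqref{eq:tosse} of Lemma \ref{blok-versus-ss-bis} once, with block side $\ell=2^{m}$ for an integer $m$ to be optimised as a function of $n$, estimating the small--box factor $\trel^{\rm min}(3\cdot 2^{m};q)$ through \eqref{cane_bello} and the renormalised factor $\trel^{\rm max}(2^{n-m};q^{*})$ through the inductive hypothesis $P(\l,\b,\bar\b)$. The improvement of the quadratic coefficient from $\l$ to $\l'$ will come entirely from minimising a one--variable quadratic in $m$.

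Fix $n\leq \theta_q/d$ and pick an integer $m\in\{1,\ldots,n-1\}$. Setting $q^{*}=1-(1-q)^{2^{md}}$, inequalities \eqref{eq:Bonfe}--\eqref{laurea} (applied with $\ell=2^{m}$) yield
\[
\theta_q - md\;\leq\;\theta_{q^{*}}\;\leq\;\theta_q - md + 1\;\leq\;\theta_q.
\]
In particular $m\leq \theta_q$, so \eqref{cane_bello} applies to the first factor, and the lower bound forces $n-m\leq \theta_{q^{*}}/d$ (using $n\leq \theta_q/d$), so $P(\l,\b,\bar\b)$ applies to the second. Substituting both estimates in \eqref{eq:tosse} we obtain
\begin{align*}
\log_2 \trel^{\rm max}(2^n;q)\;\leq\;&m\theta_q-\tfrac{m^{2}}{2}+m\log_2 m+\a\theta_q+\bar\a\\
&+(n-m)\theta_{q^{*}}-\tfrac{\l}{2}(n-m)^{2}+(n-m)\log_2(n-m)\\
&+\b\theta_{q^{*}}+\bar\b+\log_2\kappa_d.
\end{align*}

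Using $\theta_{q^{*}}\leq \theta_q-md+1$ rewrites $m\theta_q+(n-m)\theta_{q^{*}}$ as $n\theta_q-(n-m)md+(n-m)$, and $\b\theta_{q^{*}}\leq \b\theta_q$. Since $(n-m)\leq n\leq \theta_q$, the leftover $(n-m)$ is absorbed into an additional $\theta_q$--term, raising the linear coefficient from $\a+\b$ to $\a+\b+1=\b'$. Convexity of $x\mapsto x\log_2 x$ on $[0,n]$ together with its values at the endpoints yields $m\log_2 m+(n-m)\log_2(n-m)\leq n\log_2 n$. Collecting the genuinely $m$-dependent part, the remaining quadratic is
\[
Q(m)\;:=\;\tfrac{2d-1-\l}{2}\,m^{2}\;-\;(d-\l)\,n\,m\;-\;\tfrac{\l}{2}n^{2},
\]
which, since $\l\leq d\leq 2d-1$, is convex in $m$ and attains its minimum at $m^{*}=n(d-\l)/(2d-1-\l)$, with
\[
Q(m^{*})\;=\;-\tfrac{n^{2}}{2}\cdot\frac{\l(2d-1-\l)+(d-\l)^{2}}{2d-1-\l}\;=\;-\tfrac{n^{2}}{2}\cdot\frac{d^{2}-\l}{2d-1-\l}\;=\;-\tfrac{\l'}{2}n^{2}.
\]
The bound $\l'\leq d$ reduces to $(d-1)(d-\l)\geq 0$, which holds under the hypothesis $\l\leq d$.

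Choosing $m$ to be the integer closest to $m^{*}$ (clamped into $[1,n-1]$) introduces a rounding error in $Q$ of at most $(2d-1-\l)/8$, a dimension--dependent constant which, together with $\log_2\kappa_d$, is absorbed in the additive term $\bar\b'=\bar\a+\bar\b+d$. The edge cases $m^{*}\notin(0,n)$ occur only when $\l=d$, in which case $\l'=d$ and the statement is trivial. The main obstacle is thus the clean identification of the quadratic $Q$ and the verification that its optimum reproduces exactly the Möbius--type map $\l\mapsto (d^{2}-\l)/(2d-1-\l)$; all remaining work is routine convexity and constant bookkeeping needed to match the prescribed $\b'$ and $\bar\b'$.
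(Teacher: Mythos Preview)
Your proof is correct and follows essentially the same route as the paper's argument in Appendix~\ref{percile}: apply Lemma~\ref{blok-versus-ss-bis} with $\ell=2^m$, bound the two factors by \eqref{cane_bello} and the inductive hypothesis~\eqref{gattino}, reduce the $m$-dependence to a quadratic, and optimise. Your quadratic $Q(m)$ is precisely $-\tfrac{n^2}{2}\phi(m/n)$ in the paper's notation, and your $m^*$ agrees with the paper's choice $m=\lceil n(d-\lambda)/(2d-\lambda-1)\rceil$ up to rounding convention; the only (harmless) discrepancy is that you track $\log_2\kappa_d$ explicitly while the paper silently drops it, so the exact value $\bar\beta'=\bar\alpha+\bar\beta+d$ may need a slightly larger dimension-dependent constant in either version.
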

The proof follows from Lemma~\ref{blok-versus-ss-bis} and
\eqref{cane_bello} by straightforward computations similar to the ones
of Section~\ref{lagustelli} and we omit it here. The interested reader
can find all the details
in Appendix B of the extended version.\footnote
{\url{http://arxiv.org/abs/1404.7257}.}

Let $H(\lambda)= \frac{d^2-\lambda}{2d-\lambda-1}$. We interpret
the map $(\lambda, \b
,\bar\b)\mapsto(\lambda', \b', \bar\b')$ in Lem\-ma~\ref
{nuvoloni} as a
dynamical system. Let $(\lambda_k,\b_k, \bar\b_k)$ be the constants
obtained after $k$ iterations
of the above mapping starting from $( 1, \a, \bar\a)$. Clearly,
$\b_k,\bar\b_k=O(k)$, while the map $H$ has an attractive quadratic
fixed point at $d$, thus implying that $\lambda_k=
d+O(k^{-1})$. Restricting to $q \in(0,1/2]$, we then obtain that
%
\begin{eqnarray}
\label{leone_bis} \trel^\mathrm{ max} \bigl(2^n;q \bigr) \leq
2^{n\theta_q - d({n^2}/{2})+n\log_2 n +({c}/{k}) ({n^2}/{2})
+ ck
\theta_q }
\nonumber
\\[-8pt]
\\[-8pt]
\eqntext{\forall q \in(0,1/2], \forall n \leq\theta_q/d
,}
\end{eqnarray}
for a constant $c>0$ depending only on $d$, thus implying the thesis.

\begin{remark}\label{patria}
One can optimize \eqref{leone_bis} by taking $k := \lceil\sqrt{ n^2
/2 \theta_q } \rceil$. As a result, one gets a better upper bound
w.r.t. \eqref{eq:2} when $n \gg\theta_q^{1/2}$. More precisely, one gets
%
\begin{eqnarray}
\label{leone_tris} \trel^\mathrm{ max} \bigl(2^n;q \bigr) \leq
2^{n\theta_q - d({n^2}/{2})+n\log_2 n +c n \theta_q ^{1/2} }
\nonumber
\\[-8pt]
\\[-8pt]
\eqntext{\forall q \in(0,1/2], \forall n \in\bigl( c'
\theta_q^{1/2}, \theta_q/d\bigr] ,}
\end{eqnarray}
for suitable constants $c,c'>0$ independent from $n,q$.
\end{remark}
%


\subsubsection{Lower bound in (\texorpdfstring{\protect\ref{eq:2}}{2.8})}
Using Lemma~\ref{lem:ineq}, it is enough
to prove the lower bound for $L=2^n$ with $n\le\theta_q/d$. In this
case, the sought lower bound follows from the bottleneck inequality
\eqref{biancaneve} together with Theorem~\ref{pupi}. More precisely,
one gets
%
\begin{equation}
\label{tricolore} \trel^\mathrm{ max} \bigl(2^n;q \bigr) \geq
2^{n\theta_q -d
{n\choose2}+n\log_2 n +O(\theta_q) }, \qquad n\le\theta_q/d.
\end{equation}

\subsection{Minimal boundary conditions}
\subsubsection{Lower bound in \texorpdfstring{\protect\eqref{eq:3}}{(2.9)}}
By Lemma~\ref{lem:ineq}, $\trel^\mathrm{ min}(L;q)$
is bounded from below by the relaxation time of the East process on the
finite interval $[1, L]$ with parameter $q$. If $n\le\theta_q$, the
required lower bound of the form of the RHS of \eqref{eq:3}
then follows from \cite{CFM}, Theorem~2. If instead $n\ge\theta_q$, we
can use
the monotonicity in $L$ of $T^\mathrm{ min}_\mathrm{ rel} (L;q)$ (cf. Lemma~\ref{lem:ineq}) to get
$\trel^\mathrm{ min}(L;q)\ge\trel^\mathrm{ min}(2^{\theta_q};q)$.

\subsubsection{Upper bound in \texorpdfstring{\protect\eqref{eq:3}}{(2.9)}}
\label{sec:tree}
Given $\L=[1,L]^d$ consider the rooted directed graph $G=(V,E,r)$ with
vertex set
$V=\L$, root $r=(1,\ldots,1)$ and edge set $E$ consisting of all pairs
$(x,y)\in V\times V$ such that $y=x +e$ for some $e\in\cB$. Notice
that for any
$v\in V$ there is a path in $G$ from $r$ to $v$. Using this property,
it is well known that the graph $G$ contains a \emph{directed spanning
tree} (or
\emph{arborescence}) rooted at $r$, that is, a subgraph $\cT=(V,F)$ such
that the underlying undirected graph of $\cT$ is a spanning tree
rooted at $r$ of
the underlying undirected graph of $G$ and for every $v\in V$ there is
a path in $\cT$ from $r$ to $v$ (cf., e.g., \cite{Gibbons}). In the
present case, it is simple to build such a $\cT$.

Let $\cT$ be one such directed spanning tree and let us consider a modified
East-like process on $\L$ with the new constraints:
\[
c_x^{\cT,\mathrm{ min}}(\eta):= \cases{ 1, &\quad$\mbox{if either $x=r$ or $
\eta_y=0$ where $y$ is the parent of $x$ in $\cT$},$\vspace*{2pt}
\cr
0, &\quad $\mbox{otherwise.}$ }
\]
Clearly, $c_x^{\cT,\mathrm{ min}}\le c_x^{\L,\mathrm{ min}}$ so that
$\trel^\mathrm{ min}(L;q)\le\trel^\mathrm{ min}(\cT;q)$, where $\trel^\mathrm{
min}(\cT;q)$ denotes the relaxation time of the modified process. In
turn, as shown in
\cite{Praga}, Theorem~6.1 and equation (6.3), page~307, $\trel^\mathrm{
min}(\cT;q)$ is smaller than the relaxation time of the
one-dimensional East process on the longest branch of $\cT$, which has
$dL -d+1 $ vertices. Such a relaxation time was estimated quite
precisely in \cite{CFM}, Theorem~2, to be
equal to $2^{n\theta_q - {n\choose2} +n\log_2 n +O(\theta_q)}$
for $n\le\theta_q$ and to $2^{{\theta^2_q}/{2} +\theta_q\log_2
\theta_q +O(\theta_q)} $ for $n\ge\theta_q$ (cf. the discussion
before Theorem~\ref{th:main1}). This proves~\eqref{eq:3}. 

\subsubsection{Lower bound in \texorpdfstring{\protect\eqref{eq:333}}{(2.10)}}
We first need a combinatorial lemma which extends previous results for
the East process \cite{CDG}.
Consider ${\mathbb Z}_+^d$ and recall that $x_*=(1,1,\ldots, 1)$. Given
$\eta\in\O_{{\mathbb Z}_+^d}$, we write $|\h|:=|\{x\in{\mathbb
Z}^d_+ \dvtx \h_x =
0\}|$ for the total number of vacancies of $\h$. Moreover, we let $Z_m
:=\{ \eta\in\O_ {{\mathbb Z}^d_+} \dvtx |\eta|\le m\}$ and define $V_m $
as the set of configurations which, starting from the configuration $\1
$ on ${\mathbb Z}_+^d$ with no vacancy, can be reached
by East-like paths in $Z_m$ (i.e., paths for which each transition is
admissible for the East-like process in ${\mathbb Z}_+^d$ with minimal boundary
conditions, i.e., with a single frozen vacancy at $x_*-e$
for some $e
\in\cB$ and using no more than $m$ simultaneous other vacancies).
%
\begin{lemma}
\label{lemma:CDG} For $m \in\bbN$
\begin{eqnarray*}
Y(m) &:=& \max \bigl\{\|x-x_*\|_1+1 \dvtx x\in{\mathbb
Z}_+^d, \h_x = 0 \mbox{ for some } \h\in V_m
\bigr\} = 2^m-1 ,
\\
X(m)&:=& \max \bigl\{\|x-x_*\|_1+1 \dvtx x\in{\mathbb
Z}^d_+, \h_x = 0 \mbox{ for some } \h\in V_m ,
|\h| = 1 \bigr\} = 2^{m - 1}.
\end{eqnarray*}
\end{lemma}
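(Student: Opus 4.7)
The plan is to reduce the lemma to the classical one-dimensional Chung--Diaconis--Graham result by means of a level-collapsing projection, and to obtain the matching lower bounds by embedding an optimal 1D East trajectory along a coordinate ray.

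Let $e\in\cB$ denote the basis vector realizing the minimal boundary condition, so that the frozen boundary vacancy sits at $x_*-e$. Define the level of a site $x\in\bbZ_+^d$ by $\pi(x):=\|x-(x_*-e)\|_1$; since $x\geq x_*$ coordinate-wise, $\pi(x)=\|x-x_*\|_1+1$. To each configuration $\eta\in\O_{\bbZ_+^d}$ associate the one-dimensional configuration $\Phi(\eta)\in\{0,1\}^{\bbZ_{\geq 0}}$ by setting $\Phi(\eta)_0:=0$ (to encode the boundary vacancy) and, for $k\geq 1$, $\Phi(\eta)_k:=0$ iff $\eta$ has at least one vacancy at some site of level $k$.

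Next I would check compatibility of $\Phi$ with the dynamics. Suppose $\eta\to\eta'$ is a legal East-like transition at $x\in\bbZ_+^d$ and set $k:=\pi(x)$. The constraint $c_x(\eta)=1$ produces a vacancy (either in $\eta$ or in the boundary condition) at some $x-e'$ with $e'\in\cB$; a direct computation shows $\pi(x-e')=k-1$ in all cases, so $\Phi(\eta)_{k-1}=0$. Hence either $\Phi(\eta')=\Phi(\eta)$ (if some other vacancy of $\eta$ still projects to level $k$), or the pair $\bigl(\Phi(\eta),\Phi(\eta')\bigr)$ differs exactly at $k$ and is a legal transition for the 1D East process on $\bbZ_{\geq 0}$ with frozen vacancy at $0$. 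Moreover $|\Phi(\eta)|\leq|\eta|$ since distinct $d$-dimensional vacancies sharing a level collapse. Consequently any admissible East-like trajectory in $Z_m$ projects via $\Phi$ to an admissible 1D East trajectory using at most $m$ simultaneous vacancies.

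The upper bounds then follow immediately. If $\eta\in V_m$ with $\eta_x=0$, then $\Phi(\eta)$ is reachable by a 1D East trajectory using at most $m$ simultaneous vacancies and satisfies $\Phi(\eta)_{\pi(x)}=0$; the classical bound of \cite{CDG} gives $\pi(x)\leq 2^m-1$, i.e.\ $Y(m)\leq 2^m-1$. If in addition $|\eta|=1$, then $|\Phi(\eta)|=1$ and the sharpened CDG bound for single-vacancy endpoints yields $\pi(x)\leq 2^{m-1}$, i.e.\ $X(m)\leq 2^{m-1}$. The matching lower bounds come from an explicit 1D construction: take an optimal CDG trajectory on $\bbZ_{\geq 0}$ and embed it into $\bbZ_+^d$ along the ray $\{x_*+k e\}_{k\geq 0}$; each projected move at position $k\geq 1$ is licensed, as an East-like move at $x_*+(k-1)e$, by the vacancy at the previous site on the ray, which is precisely of the form $x-e$.

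The main obstacle is conceptual rather than technical: identifying the projection $\Phi$ and verifying that collapsing vacancies at equal level neither destroys the legality of the 1D projected moves nor inflates the vacancy budget. Once this compatibility is in place, the argument is a clean two-sided reduction to the one-dimensional combinatorial lemma that the statement explicitly extends.
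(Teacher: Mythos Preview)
Your proposal is correct and follows essentially the same approach as the paper: the level-collapsing projection $\Phi$ is exactly the paper's map $\rho$ (with $\rho(\eta)_a=0$ iff $\eta$ has a vacancy on the level set $\Gamma_a=\{x:\|x-x_*\|_1+1=a\}$), and your embedding along the ray $\{x_*+ke\}$ is the paper's projection onto a coordinate line. The only difference is presentational: you spell out the compatibility check for the projection in more detail than the paper, which simply asserts that the $\rho$-image of an East-like path in $Z_m(d)$ is an East-like path in $Z_m(1)$.
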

%
\begin{remark}
Note that $\|x-x_*\|_1+1$ equals the $L^1$-distance between $x$ and
the frozen vacancy.
\end{remark}
\begin{pf*}{Proof of Lemma \ref{lemma:CDG}}
It is convenient to write $V_m(d)$, $Z_m(d)$ instead of
$V_m$, $Z_m$ in order to stress the $d$-dependence.
The lower bounds $X(m) \geq2^{m-1}$ and $Y(m) \geq
2^{m}-1$ follow immediately from the same result for the East
process (cf.~\cite{CDG}, Section~2) if we use that,
under minimal boundary conditions, the projection process to the
line $(x,1,1,\ldots,1), x\in{\mathbb Z}_+$ coincides with the East process
on ${\mathbb Z}_+$.

We now prove the upper bounds $X(m)\leq2^{m-1}$ and $Y(m) \leq
2^{m}-1$. To this aim given $a \in{\mathbb Z}_+$ we define $\G_a := \{
x \in
{\mathbb Z}_+^d \dvtx \|x-x_*\|+1=a \}$ [e.g., $\G_a= \{ (1,a), (2,a-2) ,
\ldots,
(a,1) \}$ for $d=2$]. We then
define the map $\rho: \O_{{\mathbb Z}_+^d} \mapsto\O_{{\mathbb
Z}_+}$ as
\[
\rho(\eta)_a:=\cases{ 1, &\quad $\mbox{if } \eta_x =1\
\forall x \in\G _a ,$ \vspace*{2pt}
\cr
0, &\quad $\mbox{otherwise}.$}
\]
Note that $\rho$ does not increase the number of vacancies. Moreover,
if $\gamma:=( \eta^{(1)}, \ldots, \eta^{(n)})$ is an East-like path in
$Z_m(d)$, then its image under $\rho$ is an East-like path in
$Z_m(1)$ (possibly with constant pieces). In particular, given an
East-like path in $Z_m(d)$ starting from $\1$, its $\rho$-image
gives an East-like path in $Z_m(1)$ starting from the full
configuration. Hence, $\rho( V_m(d) ) \subset V_m(1)$. Since the
thesis of the lemma is true for $d=1$ due to \cite{CDG}, Section~2, we
then recover that the maximal $a \in{\mathbb Z}_+$ such that a vacancy can
be created in $\G_a$ by some path $\gamma$ is bounded by $2^m-1$. On the
other hand, such a value $a$ equals $Y(m)$. Similarly, if
$ \eta\in V_m(d)$ has a single vacancy, then $\rho(\eta) \in V_m(1)$
has a single vacancy and the thesis for $d=1$ implies that
$X(m) \leq2^{m-1}$.
\end{pf*}

The previous combinatorial result allows us to construct a small
bottleneck which gives rise to the lower bound
in \eqref{eq:333} of Theorem~\ref{th:main2}. This bottleneck is of
energetic nature as in \cite{CMST}, the \hyperref[app]{Appendix} and \cite{CFM}, Lemma~5.5.

Take $\L= [1,L]^d$ with $\ell:=\|\L\|_1 + 1 = \|v^*-x_*\|_1+1 \in
(2^{n-1},2^n]$, let $\10\in\O_{\L}$ be the configuration with a single
vacancy located at the upper corner $v^*$.
Let $V=V_n$ be the set of configurations in $\O_\L$ which can be
reached from $\1$ by East-like paths (with minimal boundary conditions)
such that at each step there are at most $n$ vacancies in $\L$.
Clearly, $V \subseteq\{ \h_{\L} \dvtx \h\in V_n\}$.
Since $X(n) = 2^{n-1}$ and $\|v^*-x_*\|_1+1 > 2^{n-1}$, we have that $\1
0 \notin V$.
Also by definition $\1 \in V$, so $\pi(V) \geq\pi(\1) = 1 + o(1)$ and
$\pi(V^c) \geq\pi(\10) \geq q(1+o(1))$.

We now give a lower bound on $\cD^\mathrm{ min}_{\L} (\1_V)$.
Let $U := \{ \h\in\O_\L\dvtx |\h| = n\}$.
By definition, if $\h\in V$ then $|\h| \leq n$.
If $\h\in V$ and $|\h| < n$, then $\h^x \in V$ for each $x \in\L$
with $c_x^{\L,\mathrm{ min}}(\h) =1$,
therefore, $\partial V \subseteq U$.
Recall \eqref{rane}, and observe that to escape the set $V$ a vacancy
must be created, so
\begin{eqnarray*}
\cD^\mathrm{ min}_{\L} (\1_V) &=& \sum
_{\h\in\partial V}\pi_\L(\h )\cK^\mathrm{ min} \bigl(
\h,V^c \bigr) \leq\sum_{\h\in U}
\pi_\L(\h) \mathop{\sum_{x\in\L\dvtx \h_x=1}}_{ c_x^\mathrm{ min}(\h) = 1}q
\\
&\leq&\pi(U)d (n+1)q \leq d(n+1)c_0(n,d)q^{n+1} ,
\end{eqnarray*}
where $c_0(n,d)$ in the number of configurations in $[1,2^n]^d$ with
exactly $n$ vacancies.
The lower bound in \eqref{eq:333} follows from the bottleneck
inequality \eqref{biancaneve} applied with minimal boundary conditions,
the above estimate and the above lower bounds on $\pi(V)$ and $\pi(V^c)$.

\subsubsection{Upper bound in \texorpdfstring{\protect\eqref{eq:333}}{(2.10)}}
The upper bound of the relaxation time on $\L=[1,L]^d$
with $\|\L\|_1 +1 \in(2^{n-1},2^n]$ can be derived as for the upper
bound in \eqref{eq:3} above.
Consider the rooted directed graph $G=(V,E,r)$ with
vertex set
$V=\L$, root $r=(1,\ldots,1)$ and edge set $E$ consisting of all pairs
$(x,y)\in V\times V$ such that $y=x +e$ for some $e\in\cB$.
By the same argument as previously, $G$ contains a directed spanning
tree, and the longest branch contains exactly $\ell:=\|v^*-x_*\|_1+1=
\|
\L\|_1+1$ vertices.
It follows that the relaxation time is bounded above by the relaxation
time of the East process on $[1,\ell]$
which is known to be bounded above by $c(n)/q^n$ (see, e.g., (2.6) in
\cite{CFM}).

\section{Proof of Theorem \texorpdfstring{\protect\ref{th:main3}}{3}}
\label{sec:7}
\subsection{Proof of \texorpdfstring{\protect\eqref{hitmin}}{(2.11)}} Since the boundary
conditions are
minimal, the mean hitting time $T^\mathrm{ min}(v_*;q)$ coincides with the
same quantity in one dimension and for the latter \eqref{hitmin}
follows from \cite{CFM}, Theorems 1 and 2.
\subsection{Proof of \texorpdfstring{\protect\eqref{hitmax}}{(2.12)}}
\label{s.oreste}In agreement with Remark~\ref{pasqua}, we prove \eqref{hitmax} for a generic ergodic boundary
condition $\s$. Below $\L=[1,L]^d$.
\subsubsection{Lower bound}
Let $\tilde\t_{v^*}$ be the
hitting time of the set $\{\eta \dvtx \eta_{v^*}=1\}$. As in~\cite{CFM}, Proposition~3.2, the hitting time $\tilde\t_{v^*}$
starting from the configuration ${\mathds1}0$ with a single vacancy at $v^*$
is stochastically dominated by the hitting time $\t_{v^*}$ starting
with no
vacancies.
Thus, $T^{\s}(v^*;q)\ge\bbE^{\L, \s}_{{\mathds1}0}(\tilde\t
_{v^*}) $.
To lower bound the latter, we use the observation that the hitting time
$\tilde\t_{v^*} $ for the East-like process in ${\mathbb Z}^d_+$ coincides
with the same hitting time for the process in $\L=[1,L]^d$ together
with Lemma~\ref{lem:capbnds}.
Using the variational characterization \eqref{eq:dir-princ} of the
capacity together with the fact that the indicator ${\mathds1}_{A_*}$
of the bottleneck
$A_*$ constructed in Theorem~\ref{pupi} is zero on $\{\eta\in\O_\L
\dvtx
\eta_{v^*}=1\}$ and one on the configuration ${\mathds1}0$, we get that
\[
\bbE^{\L, \s}_{{\mathds1}0}(\tilde\t_{v^*}) \ge c
\frac{q}{\cD
^{\s
}_{\L}({\mathds
1}_{A_*})}\ge c \frac{q}{\cD^\mathrm{ max}_{\L}({\mathds1}_{A_*})}.
\]
The
sought lower bound follows at once from Theorem~\ref{pupi}.

\subsubsection{Upper bound}
Lemma~\ref{lem:capbnds}
and Remark~\ref{rem:capmono} imply that
%
\begin{equation}
\label{eq:TR} T^{\s} \bigl(v^*;q \bigr) \leq R^{\s}_{\1,B}
\leq R^\mathrm{ min}_{\1,B} , 
\end{equation}
where $\1$ denotes the configuration with no vacancies, $B=\{\eta\in
\O
_\L\dvtx\eta_{v^*}=0\}$. 
Thanks to Thompson's principle [see \eqref{eq:thom}]
the main idea now is to construct a suitable unit flow
and to bound its energy by a multiscale analysis.
In order to proceed, we need to fix some additional notation.

Given
$x=(x_1,\ldots,x_d)\in\L$, let $\L_x = \prod_{i=1}^d [1,x_i]$ and
$B_x = \{\eta\in\O_{\L_x} \dvtx \eta_x = 0\}$.
Next, we define
%
\begin{equation}
R(x) := R^{\L_x,\mathrm{ min}}_{\1,B_x} = \inf \bigl\{\cE(\theta) \mid\theta
\mbox{ a unit flow from $\1$ to $B_x$ in $\O_{\L_x} $} \bigr
\}.
\end{equation}
%

\begin{lemma}
\label{claim:main} Let $\L=[1,L]^d$ with $L=2^n$ and $n\leq\theta_q/d$.
Given $x\in\L$ with entries $x_i \geq3$, let $V_x$ be a box inside
$\prod_{i=1}^d[2,x_i-1]$ containing at least one lattice site and let
$\rho:V_x\mapsto[0,1]$ be such that $\sum_{y\in V_x}\r(y) =1$. Then
%
\begin{equation}\quad
R(x) \leq9 \sum_{y\in V_x} \r(y) R(y) +
\frac{9}{q} \sum_{y\in
V_x}\r ^2(y) R(y
)+ \frac{9}{q} \sum_{y\in V_x}\r^2(y)
R \bigl(\tilde x(y) \bigr),
\end{equation}
where $\tilde x(y) = x-y+(0,1,1, \ldots, 1)$.
\end{lemma}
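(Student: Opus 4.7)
The plan is to invoke Thomson's Principle \eqref{eq:thom} and exhibit a unit flow $\theta$ from $\1$ to $B_x$ inside $\O_{\L_x}$ whose energy is bounded by the right-hand side of the claim. Following the recursive strategy developed in \cite{CFM}*{Lemma 5.5} in the one dimensional setting, I would write
\begin{equation*}
\theta = \sum_{y\in V_x}\rho(y)\,\theta_y,
\end{equation*}
where each $\theta_y$ is itself a unit flow from $\1$ to $B_x$ designed to pass through intermediate configurations carrying a vacancy at the auxiliary site $y$. The identity $\sum_y \rho(y)=1$ ensures that $\theta$ is again a unit flow, so it suffices to estimate $\cE(\theta)$ and apply \eqref{eq:thom}.

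For each $y\in V_x$ I would build $\theta_y$ as the concatenation of three sub-flows, $\theta_y=\theta_y^{(A)}+\theta_y^{(B)}+\theta_y^{(C)}$. Phase A is the optimal unit flow inside $\O_{\L_y}$ from $\1$ to $B_y$; it creates a vacancy at $y$ touching only sites of $\L_y$ and has energy $R(y)$. Phase B starts from the configuration with a single vacancy at $y$ (all other spins equal to $1$) and ends at a configuration with vacancies both at $y$ and at $x$. It is obtained by translating the optimal unit flow of the sub-box $[y{+}e_1,x]$: after translating its lower corner to $(1,\ldots,1)$, its upper corner is precisely $\tilde x(y)=x-y+(0,1,\ldots,1)$, while the frozen vacancy at $y$ plays the role of the minimal boundary vacancy sitting at the translated site $(0,1,\ldots,1)$. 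Phase C is the reversal of Phase A, performed in the presence of a vacancy pinned at $x$, and it removes the vacancy at $y$ so that the flow lands in $B_x$.

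To bound the energy I would use the decomposition above together with $(a+b+c)^2\le 3(a^2+b^2+c^2)$, which gives
\begin{equation*}
\cE(\theta)\le 3\sum_{\bullet\in\{A,B,C\}}\cE\Bigl(\sum_{y\in V_x}\rho(y)\,\theta_y^{(\bullet)}\Bigr).
\end{equation*}
For Phase A all the flows $\theta_y^{(A)}$ emanate from $\1$ and can share edges; a Cauchy--Schwarz step with weights $\sqrt{\rho(y)}$ yields $\cE(\sum_y\rho\,\theta_y^{(A)})\le\sum_y\rho(y)\,R(y)$, up to the bounded factor $\pi_{\L_x\setminus\L_y}(\1)^{-1}$ that appears when each $\theta_y^{(A)}$ is extended from $\O_{\L_y}$ to $\O_{\L_x}$. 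Here the hypothesis $n\le\theta_q/d$, i.e.\ $L^d\le 1/q$, is decisive, exactly as in Lemma \ref{lem:capbnds}, because it ensures $(1-q)^{L^d}\ge c>0$. For Phases B and C every edge used lies between configurations that both have $\eta_y=0$, so the supports are \emph{disjoint} across $y\in V_x$; the energies then add up with weights $\rho(y)^2$, and each individual resistance is inflated by a factor $1/q$ compared to the resistance in the translated sub-network, because $\pi(\eta_y=0)=q$. This produces the $\rho(y)^2 R(\tilde x(y))/q$ and $\rho(y)^2 R(y)/q$ contributions. Absorbing the resulting universal constants into the factor $9$ of the statement completes the estimate.

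The main obstacle will be the rigorous construction of Phase B: one has to verify that the translated optimal flow, once lifted to a flow in $\O_{\L_x}$ with a vacancy frozen at $y$, respects the East-like constraints inherited from $\L_x$ (this is where the exact formula $\tilde x(y)=x-y+(0,1,\ldots,1)$ and the rigidity of the minimal boundary condition become essential) and that the only modification of the resistance is the universal factor $1/q$. An analogous but easier check is required for the reversal in Phase C. Once these points are in place, the three terms on the right-hand side follow from the three-phase energy decomposition together with the dichotomy between overlapping supports (treated by Cauchy--Schwarz, producing weights $\rho(y)$) and disjoint supports (treated by additivity, producing weights $\rho(y)^2$) sketched above.
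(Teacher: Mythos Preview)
Your global strategy --- Thomson's principle, a convex combination $\Theta=\sum_y\rho(y)\theta_y$ of per--$y$ unit flows, a three--phase decomposition of each $\theta_y$, and the dichotomy ``overlapping phase $\Rightarrow$ weight $\rho(y)$ via Cauchy--Schwarz'' versus ``disjoint phases $\Rightarrow$ weight $\rho(y)^2$'' --- is exactly the one the paper uses. The identification of the translated sub--box $\widetilde\L_y$ with $\L_{\tilde x(y)}$ is also correct.

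However, your three phases do not concatenate into a valid unit flow. Phase~A is the equilibrium flow from $\1$ to the \emph{set} $B_y^x$, so its sinks are spread over many configurations in $B_y^x$, yet you let Phase~B start from the \emph{single} configuration $0_y$. The sum $\theta_y^{(A)}+\theta_y^{(B)}+\theta_y^{(C)}$ therefore has nonzero divergence at every point of $B_y^x\setminus\{0_y\}$, so it is not a unit flow from $\1$ to $B_x$. Relatedly, your Phase~C cannot have the disjointness property you invoke: since its purpose is to \emph{remove} the vacancy at $y$, the edges it uses do not all satisfy $\eta_y=0$, and pinning $x$ (rather than $y$) gives no separation across different $y\in V_x$.

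The paper fixes this by putting the ``reversal'' phase \emph{between} your A and B and by pinning $y$, not $x$. Concretely, the three flows are
\[
\phi_y:\ \1\to B_y^x,\qquad
\widehat\phi_y:\ B_y^x\to 0_y\ \text{(reversal of $\phi_y$ with the vacancy at $y$ frozen)},\qquad
\widetilde\phi_y:\ 0_y\to B_x .
\]
With this ordering $\phi_y+\widehat\phi_y$ is a genuine unit flow from $\1$ to $0_y$ (the divergences on $B_y^x$ cancel exactly), and then $\widetilde\phi_y$ carries it into $B_x$; no further cleanup is needed because landing anywhere in $B_x$ suffices. The disjointness of $\widehat\phi_y$ and $\widetilde\phi_y$ across $y$ now follows cleanly from the fact that every configuration in their supports has $\eta_y=0$, and the $1/q$ factors come from $\pi(\eta_y=0)=q$. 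Once you reorder the phases this way your energy bookkeeping goes through verbatim and yields the three sums with the constant $3e\le 9$.
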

Assuming the lemma, we complete the proof of the upper bound.
Given $N\in\bbN$, let
$L^\pm_m$ be defined
recursively by
\begin{eqnarray*}
L^+_{m}&=&2 L^+_{m-1} -\frac{1}{N}2^{m-1}
- 2,\qquad L_0^+=11,
\\
L^-_{m}&=&2 L^-_{m-1} +\frac{1}{N}2^{m-1}
+ 2,\qquad L_0^-=1.
\end{eqnarray*}
A simple computation gives
\[
L^+_m=2 +2^m \biggl(9 - \frac{ m }{2N} \biggr) ,\qquad
L^-_m=-2 +2^m \biggl(3 + \frac{ m }{2N} \biggr).
\]
%
It is straightforward to verify that the following occurs for $ 1\leq
m\le N$:
%
\begin{longlist}[(ii)]
\item[(i)]$L_m^-\le L_m^+$;
\item[(ii)] For any $x,y\in{\mathbb Z}^d$ such that $L_m^-\le x_i\le L_m^+$ and
$|2y_i-x_i|\le\frac{1}{N}2^{m-1}$ we have that both $y_i$ and
$x_i-y_i$ belong to the interval $[L_{m-1}^-+1,L_{m-1}^+-1]$.
\end{longlist}
%
\begin{lemma}
\label{lem:res}
Setting $R_m:= \max_{x\in[L_m^-,L_m^+]^d}R(x)$, 
\[
R_m\le27 \frac{N^d}{q2^{dm}}R_{m-1}, \qquad m_0 < m
\le N ,
\]
where $m_0 = \lceil\log_2 (4N  )\rceil$. In particular,
%
\begin{equation}
\label{eq:RN} R_N\le27^{N-m_0} 2^{(N-m_0)\theta_q - d  [ {N\choose2} - {m_0
\choose2}  ]+ d (N-m_0)\log_2 N }
R_{m_0}.
\end{equation}
\end{lemma}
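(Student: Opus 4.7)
The strategy is to first establish the single-step recursion $R_m \le 27 \frac{N^d}{q\, 2^{dm}} R_{m-1}$ for every $m_0 < m \le N$ by applying Lemma \ref{claim:main} to each $x \in [L_m^-, L_m^+]^d$ with a carefully chosen scaling box and uniform weight, and then to iterate this bound from $m = m_0+1$ up to $m = N$ to recover \eqref{eq:RN}.

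For the single-step bound, fix $x \in [L_m^-, L_m^+]^d$ and set
$V_x := \{ y \in \bbZ^d : |2y_i - x_i| \le \lfloor 2^{m-1}/N \rfloor \text{ for every } i\}$,
a box of integer points symmetric about $x/2$. Property (ii) from the set-up of $L_m^\pm$ guarantees that every $y \in V_x$ satisfies $y_i, x_i - y_i \in [L_{m-1}^- + 1, L_{m-1}^+ - 1]$ in each coordinate, and hence both $y$ and $\tilde x(y) = x - y + (0,1,\dots,1)$ lie inside $[L_{m-1}^-, L_{m-1}^+]^d$; by definition of $R_{m-1}$ this forces $R(y), R(\tilde x(y)) \le R_{m-1}$. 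A short check from the explicit formulas $L_m^+ = 2 + 2^m(9 - m/(2N))$, $L_m^- = -2 + 2^m(3 + m/(2N))$ also yields $V_x \subset \prod_i [2, x_i - 1]$, so Lemma \ref{claim:main} applies. Choosing $\rho \equiv 1/|V_x|$ uniform, the three sums on its right-hand side are bounded by $R_{m-1}$, $R_{m-1}/|V_x|$, $R_{m-1}/|V_x|$ respectively, giving
$R(x) \le \bigl(9 + 18/(q|V_x|)\bigr)\, R_{m-1}$.
The hypothesis $m > m_0 = \lceil \log_2(4N) \rceil$ yields $2^{m-1}/N \ge 4$ and thus $|V_x| \ge (2^{m-1}/(2N))^d$, while $n \le \theta_q/d$ gives $q\, 2^{dm} \le q\, 2^{dn} \le 1$, which lets the additive 9 be absorbed into the second term. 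Taking the maximum over $x \in [L_m^-, L_m^+]^d$ gives the one-step recursion with some constant depending only on $d$; this constant may be replaced by $27$ at the cost of absorbing a $2^{O(\theta_q)}$ multiplicative factor into the eventual error term.

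Iterating the one-step recursion from $m_0+1$ to $N$ yields
\[
R_N \le 27^{N-m_0}\, N^{d(N-m_0)}\, q^{-(N-m_0)}\, 2^{-d \sum_{m=m_0+1}^N m}\, R_{m_0}.
\]
Substituting $\sum_{m=m_0+1}^N m = \binom{N+1}{2} - \binom{m_0+1}{2} = \binom{N}{2} - \binom{m_0}{2} + (N - m_0)$, $q^{-1} = 2^{\theta_q}$ and $N^{d(N-m_0)} = 2^{d(N-m_0)\log_2 N}$ collects all the exponents into exactly the form of \eqref{eq:RN}, up to a harmless extra factor $2^{-d(N-m_0)} \le 1$.

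The main obstacle is the choice of $V_x$ in the one-step argument: it must be small enough that both $y$ and $\tilde x(y)$ fall inside the previous-scale box $[L_{m-1}^-, L_{m-1}^+]^d$ (forcing its side length to be at most $\sim 2^m/N$), yet large enough that $1/(q|V_x|)$ is of order $N^d/(q\, 2^{dm})$. The recursive definitions of $L_m^\pm$ with their buffer term $2^{m-1}/N$, combined with the cutoff $m > m_0$ which guarantees that $V_x$ contains enough lattice points, are designed precisely so that these two opposing requirements can be met simultaneously; once this balance is in place, the remainder of the argument is a routine product computation.
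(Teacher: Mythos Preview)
Your proposal is correct and follows essentially the same route as the paper: the same choice of $V_x$ centered near $x/2$ with side $\sim 2^{m-1}/N$, the same use of property (ii) to place both $y$ and $\tilde x(y)$ in the scale-$(m-1)$ box, the same application of Lemma~\ref{claim:main} with uniform $\rho$, and the same absorption of the additive $9$ into the $1/(q|V_x|)$ term via $q\,2^{dm}\le 1$. Your remark that the constant comes out as $27$ only up to a dimension-dependent factor (absorbed into the $2^{O(\theta_q)}$ error) is accurate, and in fact the paper's own computation has the same harmless $4^d$ slack between the displayed bound and the stated lemma.
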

\begin{pf}
Fix $x \in[L_m^-,L_m^+]^d$, and let $V_x = \{y\in\L_x \dvtx |2y_i - x_i|
\leq\frac{2^{m-1}}{N} \mbox{ for } 1\leq i \leq d\}$.
Observe that $|V_x| \geq (\frac{2^{m-1}}{N} -1  )^d \geq
(\frac{1}{N}2^{m-2}  )^d\geq1$, where in the second inequality we
have used $m > m_0=\lceil\log_2 (4N  )\rceil$.
Since $L_m^- \geq2^m> 2^2$, we have $x_i \geq4$, while $y_i, x_i -y_i
\geq L_{m-1}^-+1 \geq2$
[by (ii) above]. In particular, both $x$ and $V_x$ fulfill the
assumptions of Lemma~\ref{claim:main}.

By (ii) above, we have $y, \tilde x(y)\in[L_{m-1}^-+1,L_{m-1}^+]^d$
for each $y\in V_x$, so $R(y)$ and $R(\tilde x(y))$ are bounded from
above by $ R_{m-1}$ [recall $\tilde x(y) = x-y+(0,1,1,\ldots, 1)$].
Now applying Lemma~\ref{claim:main} with $\rho$ uniform on $V_x$, that
is, $\rho(y) =1/|V_x|$ for all $y\in V_x$, we have
\[
R(x) \leq9 \biggl(\frac{4N}{2^{m}} \biggr)^{d} R_{m-1} +
\frac
{18}{q} \biggl(\frac{4N}{ 2^{m}} \biggr)^{2d}
R_{m-1} \leq\frac
{27}{q} \biggl(\frac{4N}{ 2^{m}}
\biggr)^{d} R_{m-1}.
\]
We arrive at \eqref{eq:RN} by iterating the above inequality.
\end{pf}
In order to complete the proof of the upper bound in \eqref{hitmax},
fix $L\in(2^{n-1},2^n]$ with $n\le\theta_q/d$ and choose $N = n-3$.
In this case, $L \in[L_N^-,L_N^+]$, since
\begin{eqnarray*}
L_N^-&=& -2 +2^N \bigl(3 + \tfrac{ 1}{2} \bigr)
\leq2^{N+2} = 2^{n-1} < L \leq2^n \leq2 +
2^{N+3} \\
&\leq&2 +2^N \bigl(9 - \tfrac{ 1}{2}
\bigr)=L_N^{+}.
\end{eqnarray*}
Therefore, using \eqref{eq:TR} we have
$ T^{\s}(v^*;q) \leq R^\mathrm{ min}(v^*) \leq R_N$.
If we apply Lem\-ma~\ref{lem:res} with $m_0 =\lceil
\log_2[4(n-3)]\rceil$, we get
\[
R_N\le2^{n\theta_q -d {n\choose2} +O(\theta_q\log\theta_q)}R_{m_0}.
\]
The relaxation time of the East process on an interval $I$ of length
$O(m_0)$ is bounded by $2^{O(\theta_q m_0)}$ by \cite{CFM}, Theorem~2.
Also, by \cite{CFM}, Theorem~1, Proposition~3.2, this relaxation time is
of the same order as the mean time needed to put a vacancy in the
rightmost site of $I$ starting from the filled configuration. Now
following the derivation of (7.10) below we have the desired bound
\[
R_{m_0}\le2^{O(\theta_q m_0)}=2^{O(\theta_q\log\theta_q)}.
\]
%

\subsubsection{Proof of Lemma \texorpdfstring{\protect\ref{claim:main}}{7.1}}
The proof is based on an iterative procedure which generalizes our
construction in \cite{CFM}, Appendix A.2. Given $ y \in\L_x$, we
define $\widetilde\L_y:= [y_1+1,x_1] \times\prod_{i=2}^d [y_i,
x_i]$, $0_y\in\O_{\L_x}$ as the configuration with a single vacancy
located as $y$ and set
\begin{eqnarray*}
 B_y &:=& \{\eta\in\O_{\L_y} \dvtx \eta_y = 0
\} ,
\\
 B^x_y &:=& \{\eta\in\O_{\L_x} \dvtx
\eta_y = 0 \mbox{ and } \eta_z=1 \mbox{ for } z \in
\L_x \setminus\L_y\} ,
\\
 C_y^x&:=& \bigl\{\eta\in\O_{\L_x} \dvtx
\eta_y =0 \mbox{ and } \eta_z = 1 \mbox{ for } z \notin
\widetilde\L_y\cup\{y\} \bigr\}.
\end{eqnarray*}

%
\begin{figure}[b]

\includegraphics{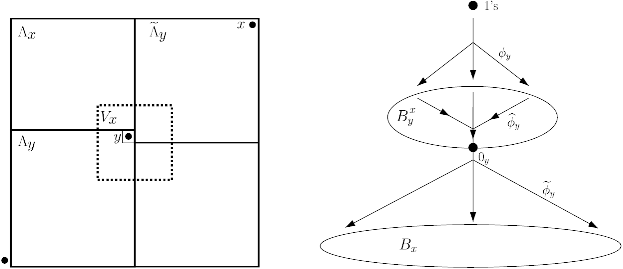}

\caption{Left: Geometry of the lattice $\Lambda_x$ with site
$x$ in the top right, and sub-lattices $\L_y$ and
$\widetilde\L_y$ for a $y\in V_x$. Right: Construction of
the unit flow $\theta_y=\phi_y+\hat\phi_y+\widetilde\phi
_y$.}\label{fig:animals}
\end{figure}

Let $\psi_y$ be the equilibrium unit flow in $\O_{\L_y}$ from
$\mathds{1}$ to $B_y$, whose energy equals $R(y)$. We now restrict to
$y \in V_x$ (thus implying in particular that the box
$\widetilde{\L}_y$ is not empty).
We introduce the flows $\phi_y, \widehat\phi_y,\widetilde\phi_y$
on $\O_{\L_x}$ (cf. Figure~\ref{fig:animals}), roughly described as
follows: $\phi_y$ is the unit flow from $\mathds{1}$ to $B_y^x$
obtained by mimicking $\psi_y$ on configurations which have no
vacancies outside $\L_y$, $\widehat\phi_y$ keeps the vacancy at $y$
fixed and reverses $\phi_y$ to clear all the other vacancies ($\phi
_y+\widehat\phi_y$ will become a unit flow from $\1$ to $0_y$), and
finally $\widetilde\phi_y$ is the unit flow from $0_y$ to $B_x$ which
mimics $\psi_{\tilde x(y)}$ by using only transitions inside
$\widetilde\L_y$. More precisely, we set
%
\begin{eqnarray}
 \phi_y(\s, \eta)&=& \cases{ \psi_y (
\s_{\L_y}, \eta_{\L_y}), &\quad $\mbox{if } \s_z,
\eta_z =1 \mbox{ for } z \in\L_x \setminus \L_y
,$ \vspace *{2pt}
\cr
0, &\quad $\mbox{otherwise} $, }\\
\widehat
\phi_y(\s,\h) &:=& \cases{ \phi_y \bigl(
\h^y,\s^y \bigr), &\quad  $ \mbox{if } \s,\h\in
B^x_y $,\vspace *{2pt}
\cr
0, &\quad $ \mbox{otherwise} $,}
\\
\widetilde\phi_y(\s,\h) &:=& \cases{ \psi_{\tilde x (y)}(
\widetilde\s,\widetilde\h), & \quad $\mbox{if } \s,\h \in C^x_y
$,\vspace*{2pt}
\cr
0, &\quad  $\mbox{otherwise} $,}
\end{eqnarray}
where $\widetilde\h\in\O_{\L_{\widetilde{x}(y)}}$ is defined as $
\widetilde\h_z :=
\h_{z+y-(0,1,1,\ldots,1)} $ for $z \in\L_{\tilde{x}(y)}$. Note that
$\widetilde\L_y-y+(0,1,1,\ldots, 1)= \L_{\tilde{x}(y)}$ and $\tilde
{x}(y) \in\L_x$.

\begin{claim}
\label{claim:zeroDiv} For each $y \in V_x$, the flow
$\theta_y := \phi_y +\widehat\phi_y + \widetilde\phi_y$ is a unit flow
from $\1$ to $ B_x$. In particular, $\Theta:= \sum_{y\in V_x}\rho(y)
\theta_y$ is a unit flow from $\1$ to $ B_x$.
\end{claim}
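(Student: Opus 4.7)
The plan is to show that each $\theta_y$ is a unit flow from $\1$ to $B_x$; the claim for $\Theta$ then follows immediately since a convex combination of unit flows from $\1$ to $B_x$ with weights $\rho(y)$ summing to $1$ is itself such a flow. Antisymmetry of $\phi_y,\widehat\phi_y,\widetilde\phi_y$, and hence of $\theta_y$ and $\Theta$, is a direct consequence of the definitions and the antisymmetry of the equilibrium flows $\psi_y,\psi_{\tilde x(y)}$.

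I would then treat the two outer pieces by routine bookkeeping. Since $\phi_y(\sigma,\eta)$ vanishes unless $\sigma,\eta\in T:=\{\xi\in\O_{\L_x}\,:\,\xi_{\L_x\setminus\L_y}=\1\}$, and on such edges equals $\psi_y(\sigma_{\L_y},\eta_{\L_y})$, the divergence satisfies $\div\phi_y(\xi)=\div\psi_y(\xi_{\L_y})\1_{\{\xi\in T\}}$, making $\phi_y$ a unit flow from $\1$ to $B_y^x$. Analogously, the affine bijection $C_y^x\ni\eta\leftrightarrow\widetilde\eta\in\O_{\L_{\tilde x(y)}}$ (sending $0_y$ to $\1_{\L_{\tilde x(y)}}$ and $\widetilde B_x:=C_y^x\cap B_x$ to $B_{\tilde x(y)}$) promotes $\psi_{\tilde x(y)}$ to a unit flow $\widetilde\phi_y$ from $\{0_y\}$ to $\widetilde B_x\subset B_x$. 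The crucial geometric fact $B_y^x\cap C_y^x=\{0_y\}$, valid because $\L_y\cap\widetilde\L_y=\emptyset$, ensures the supports of $\phi_y+\widehat\phi_y$ and of $\widetilde\phi_y$ interact only at $0_y$.

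The heart of the argument is the identity
\[
\div(\phi_y+\widehat\phi_y)(\xi)\;=\;\1_{\{\xi=\1\}}-\1_{\{\xi=0_y\}},
\]
\ie $\widehat\phi_y$ collapses the sink set $B_y^x$ of $\phi_y$ down to $\{0_y\}$. Away from $B_y^x$ this is automatic since $\widehat\phi_y$ is supported on $B_y^x\times B_y^x$. For $\xi\in B_y^x$, the substitution $\eta'=\eta^y$ rewrites $\div\widehat\phi_y(\xi)=\sum_{\eta\in B_y^x}\phi_y(\eta^y,\xi^y)$ as a sum of $\psi_y(\tau,(\xi^y)_{\L_y})$ over $\tau\in\O_{\L_y}\setminus B_y$; completing this range to all of $\O_{\L_y}$ produces $-\div\psi_y((\xi^y)_{\L_y})=-\1_{\{\xi=0_y\}}$, corrected by the contribution from the unique $B_y$-neighbor $\xi_{\L_y}$ of $(\xi^y)_{\L_y}$ (they differ only at $y$). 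Adding $\div\phi_y(\xi)=\div\psi_y(\xi_{\L_y})$ and expanding the latter over single-site flips, the $y$-flip summand cancels the correction by antisymmetry, while each remaining $\psi_y(\xi_{\L_y},\xi_{\L_y}^z)$ with $z\in\L_y\setminus\{y\}$ vanishes because both endpoints lie in the sink set $B_y$, and the \emph{equilibrium} flow admits the Thomson potential representation $\psi_y(\sigma,\sigma')=c(\sigma,\sigma')[h(\sigma)-h(\sigma')]$ with $h\equiv 0$ on $B_y$. This last cancellation is the main obstacle: a generic unit flow with the same source and sink need not vanish between sinks, so the energy-minimizing character of $\psi_y$ is used essentially here. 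Once established, the divergences of $\theta_y$ sum to $+1$ at $\1$, to $0+(-1)+1=0$ at $0_y$, to a non-positive value at points of $\widetilde B_x\subset B_x$, and to $0$ everywhere else, completing the proof.
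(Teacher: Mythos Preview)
Your proof is correct and follows essentially the same strategy as the paper. Both arguments hinge on the single nontrivial observation that the equilibrium flow $\psi_y$ vanishes on edges with both endpoints in the sink $B_y$; you phrase this via the Thomson potential representation $\psi_y(\sigma,\sigma')=c(\sigma,\sigma')[h(\sigma)-h(\sigma')]$ with $h\equiv 0$ on $B_y$, while the paper invokes the same fact as ``the variational characterization of the equilibrium flow.'' The only difference is organizational: you package the computation as showing $\phi_y+\widehat\phi_y$ is a unit flow from $\1$ to $\{0_y\}$ and then concatenate with $\widetilde\phi_y$, whereas the paper runs a direct case analysis on $\eta\notin B_x\cup\{\1\}$ (splitting into $\eta\notin B_y^x\cup C_y^x$, $\eta\in C_y^x\setminus\{0_y\}$, $\eta\in B_y^x\setminus\{0_y\}$, and $\eta=0_y$). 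Your modular presentation is arguably cleaner, but the computations and the essential idea are identical.
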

\begin{pf}
We prove that $\theta_y$
is a unit flow from $\1$ to $ B_x$, which trivially implies the thesis
for $\Theta$. Fix $y \in V_x$. Note that $y \neq (1,1,\ldots, 1)$ and
$y \neq x$ by our conditions on $V_x$.
Clearly, $\div\theta_y(\1) = 1$ by construction, it remains to show
that $\div\theta_y (\h) = 0$ for all $\h\notin B_x \cup\{\1\}$ and
$\div\theta_y(\h) \leq0$ for all $\h\in B_x$.
In general, we have $\div\theta_y = \div\phi_y + \div\widehat\phi
_y + \div\widetilde\phi_y$, while $ \div\phi_y(\h)$, $\div
\widehat
\phi_y(\h)$ and $\div\widetilde\phi_y(\h)$ equal, respectively,
\begin{eqnarray*}
&&\sum_{z \in\L_y\dvtx c_z^{\L_x , \mathrm{ min} }(\eta)=1}\phi_y \bigl( \eta, \eta
^z \bigr) ,\qquad \sum_{z \in\L_y\dvtx c_z^{\L_x , \mathrm{ min} }(\eta)=1}\widehat
\phi_y \bigl( \eta, \eta^z \bigr) , \\
&&\sum
_{z \in\widetilde\L_{y} \dvtx c_z^{\L_x , \mathrm{ min} }(\eta)=1} \widetilde\phi_y \bigl(\eta,
\eta^z \bigr).
\end{eqnarray*}

If $\h\in B_x$, then $\div\theta_y(\h) = \div\widetilde\phi_y(\h)$
and the latter equals $\div\psi_{\tilde x (y)}(\widetilde\h)$ if
$\h
\in C_y^x$ and zero otherwise. Since $\div\psi_{\tilde x
(y)}(\widetilde\h) \leq0$ for $\h\in C_y^x \cap B_x$ by definition of
the equilibrium flow, we conclude that $\div\theta_y(\h) \leq0$ for
all $\h\in B_x$.
We now distinguish several cases, always restricting to $\h\notin B_x
\cup\{\1\}$.

\begin{itemize}
\item \textit{Case $\h\notin B^x_y \cup C^x_y $}. By construction,
$\div\phi_y (\h) = \div\psi_y(\h_{\L_y})$ or $0$. Since $\psi
_y$ is a
unit flow from $\1$ to $B_y$, it is divergence free outside of $\1$ and
$B_y$, in particular $\div\phi_y (\h) = 0$. Also $\widehat\phi
_y(\h
,\cdot) \equiv0$ and $\widetilde\phi_y(\h,\cdot) \equiv0$. This
implies that $\div\theta_y =0$.

\item \textit{Case
$\eta\in C^x_y$ and $\eta\neq 0_y$}. We have $ \phi_y(\eta, \cdot
)\equiv0$ and $\widehat\phi_y(\eta, \cdot)\equiv0$. On the other
hand, $\div\widetilde\phi_y(\eta)= \div\psi_{\widetilde{x}(y)}
(\tilde\eta)=0$ since $\tilde\eta\notin B_{\widetilde{x}(y)}
\cup
\mathds{1}_{\L_{\widetilde{x}(y) }}$ (recall that $\eta\notin B_x$,
$\eta\neq0_y$).

\item \textit{Case
$\h\in B^x_y $ and $\eta\neq 0_y$}. Note that $B^x_y \cap C^x_y=0_y$,
so $\widetilde\phi_y(\eta, \cdot)\equiv0$.
Also $\phi_y(\s, \s')=0 $ if $\s,\s' \in B^x_y$ since $\psi_y$ is the
equilibrium unit flow in $\O_{\L_{y}}$ from $\1$ to $B_y$, otherwise
replacing $\psi_y$ by a flow which is identical on all edges except
between configurations in $B_y$, on which the new flow is identically
zero, would give rise to a unit flow from $\1$ to $B_y$ with lower
energy, contradicting the variational characterization of the
equilibrium unit flow. It follows that
\begin{eqnarray*}
\div\theta_y(\eta) &=& \mathop{\sum_{z \in\L_y \dvtx }}_{ c_z^{\L_x ,
\mathrm{ min} }(\eta)=1}
\bigl( \phi_y \bigl( \eta, \eta^z \bigr) + \widehat\phi
_y \bigl( \eta, \eta^z \bigr) \bigr)
\\
&=& - c_y^{\L_x, \mathrm{ min}} (\eta)\phi_y \bigl(
\eta^y, \eta \bigr) -\mathop {\sum_{z \in\L_y \dvtx \h^z\in B^x_y }}_{ c_z^{\L_x, \mathrm{ min} }(\eta)=1}
\phi _y \bigl( \eta^y, \bigl(\eta^z
\bigr)^y \bigr)
\\
&=& - \div\phi_y \bigl(\h^y \bigr) = 0,
\end{eqnarray*}
where in the second identity we have used $(\h^z)^y=(\h^y)^z$.
The last identity follows from the fact that $\h,\h^z\in B^x_y$ implies
$z\neq y$, and that $\eta^y_{\L_y} \notin B_y \cup\{\1_{\L_y} \}$,
hence $\div\psi_y( \eta^y_{\L_y} )=0$.

\item \textit{Case $\h=0_y $}. There are only $1+d$ transitions under
the East dynamics from state $0_y$: the unconstrained site $(1,1,\ldots
,1)$, as well as any of the $d$ upper--right neighbors of $y$, can update.
However, any transition with nonzero flow $\theta_y$ must change the
configuration only inside $\L_y\cup\widetilde\L_y$. Hence,
\begin{eqnarray*}
\div\theta_y (0_y) &=& \widehat\phi_y
\bigl(0_y,0_y^{(1,1,\ldots,1)} \bigr) + \widetilde
\phi_y \bigl(0_y,0_y^{y+(1,0,\ldots,0)} \bigr)
\\
&=& -\psi_y \bigl(\1,\1^{(1,1,\ldots,1)} \bigr) +
\psi_{\widetilde x(y)} \bigl(\1,\1 ^{(1,1,\ldots,1)} \bigr) = -1 + 1 = 0.
\end{eqnarray*}
\end{itemize}
\upqed\end{pf}
%
Given two flows $\theta, \theta' $ on $\O_{\L_x}$ we write $\theta
\perp\theta'$ if
$\theta\cdot\theta'\equiv0$, that is, $\theta$ and $\theta'$ have
disjoint supports.
Note that, given $y \neq z$ in $V_x$,
$B_y^x \cap B_z ^x= \varnothing$ and
$C_y^x \cap C_z ^x= \varnothing$. Hence, by definition of
$ \widehat\phi_y$, $ \widetilde\phi_y $ we get
%
\begin{equation}
\label{dampi} \widehat\phi_y \perp\widehat\phi_z ,\qquad
\widetilde\phi_y \perp \widetilde\phi_z \qquad \mbox{for any
$y \neq z$ in $V_x$}.
\end{equation}
To complete the proof of the lemma, we set
\[
\Phi: = \sum_{y\in V_x} \rho(y)\phi_y ,\qquad
\widehat\Phi:= \sum_{y\in
V_x} \rho(y)\widehat
\phi_y ,\qquad \widetilde\Phi: = \sum_{y\in V_x}
\rho (y)\widetilde\phi_y.
\]
Note that $\Theta= \Phi+ \widehat\Phi+ \widetilde\Phi$.
Due to Claim~\ref{claim:zeroDiv}, $\Theta$ is a unit flow in $\O_{\L
_x}$ from $\1$ to $B_x$. Moreover, by Thompson principle [cf. \eqref
{eq:thom}], Schwarz inequality and \eqref{dampi}, we get
%
\begin{eqnarray}
\label{torlo} R(x) & \leq&\cE(\Theta) \leq3 \cE(\Phi)+3 \cE(\widehat\Phi) + 3
\cE (\widetilde\Phi)
\nonumber
\\[-8pt]
\\[-8pt]
\nonumber
& \leq&3 \sum_{y\in V_x} \rho(y) \cE(\phi_y)
+ 3 \sum_{y\in V_x} \rho ^2(y)\cE(\widehat
\phi_y) + 3 \sum_{y\in V_x}
\rho^2(y)\cE (\widetilde\phi _y).
\end{eqnarray}
Let $\eta\in\O_{\L_x}$ with $\eta_{\L_x\setminus\L_y}=
\1_{\L_x\setminus\L_y}$ and let $z \in\L_y$.
Observe now that, 
$(\eta, \eta^z)$ is a possible transition for the East dynamics on
$\L
_x$ if and only if $(\eta_{\L_y}, \eta_{\L_y}^z)$ is a possible
transition for the East dynamics on $\L_y$, and in this case (since
$|\L
_x| \leq1/q$ and $1-q\leq e^{-q}$)
\[
r^{\L_x, \mathrm{ min} } \bigl(\eta,\eta^z \bigr)= (1-q)^{-|\L_x \setminus\L_y|}
r^{\L
_y, \mathrm{ min} } \bigl(\eta_{\L_y},\eta_{\L_y}^z
\bigr)\leq e r^{\L_y, \mathrm{
min} } \bigl(\eta_{\L_y},\eta_{\L_y}^z
\bigr).
\]
This implies that $\cE( \phi_y)\leq e \cE(\psi_y)= e R(y)$. Similarly,
by straightforward computations, one can prove that $\cE(\widehat\phi
_y) \leq(e/q) R(y)$ and
$\cE( \widetilde\phi_y) \leq(e/q)
R(\tilde x(y) ) $. Coming back to \eqref{torlo}, we get the thesis.


\subsection{Proof of \texorpdfstring{\protect\eqref{hitmin2}}{(2.13)}}
\subsubsection{Lower bound}
The lower bound follows by appealing to the combinatorial result of
Lemma~\ref{lemma:CDG} and
making a similar bottleneck argument as for the proof of the lower
bound in \eqref{eq:333}.
Fix $x\in{\mathbb Z}_+^d$ such that $\|x-x_*\|_1+1 \in[2^{n-1},2^n)$ where
$x_*=(1,1,\ldots,1)$.
Recall that $V_m$ is the set of configurations which can be reached,
starting from the configuration $\1$ on ${\mathbb Z}_+^d$, by East-like paths
in $Z_m$.
Let $V= \{\h_{\L} \dvtx \h\in V_{n-1}\}$ be the image of $V_{n-1}$ under
projection on the lattice $\L= \prod_{i=1}^d[1,x_i]$, then by Lemma~\ref{lemma:CDG} $\1_\L\in V $ and $\h\notin V$ for all $\h\in\O
_\L
$ such that $\h_{x} = 0$, since $x \geq2^{n-1}$ and $Y(n-1) = 2^{n-1}-1$.

It follows from the graphical construction (see Section~\ref{sec:graph}) that for any an event $\cA$ which belongs to the $\s
$-algebra generated by $\{\h_x(s)\}_{x \in\L}$ we have
$ \bbP^{{\mathbb Z}_+^d, \mathrm{ min}}_\h(\cA) = \bbP^{\L,\mathrm{ min}}_{\h
_{\L
}}(\cA)$.
In particular,\vspace*{1pt} we get
$ T^\mathrm{ min}(x;q) = \bbE_{\mathds1}^{\L,\mathrm{ min}}(\t_x) $ so that
(cf. the beginning of Section~\ref{s.oreste})
\[
\bbE_{\mathds1}^{\L,\mathrm{ min}}(\t_x) \geq\frac{c}{ \cD^\mathrm{
min}_{\L
}(\1_{V})}.
\]
Finally, observe that to escape the set $V$ a vacancy must be created
by a transition which is allowable under the East-like dynamics,
therefore, $\partial V \subseteq U := \{ \h\in\O_\L\dvtx |\h| = n-1\}$,
so using \eqref{rane}
\begin{eqnarray*}
\cD^\mathrm{ min}_{\L} (\1_V) &=& \sum
_{\h\in\partial V}\pi_\L(\h )\cK^\mathrm{ min} \bigl(
\h,V^c \bigr) \leq\sum_{\h\in U}
\pi_\L(\h) \mathop{\sum_{x\in\L\dvtx \h_x=1}}_{ c_x^{\L, \mathrm{ min}}(\h) = 1}q
\\
&\leq&\pi_\L(U) d n q \leq d n c_0(n-1,d)q^{n}
,
\end{eqnarray*}
where $c_0(n-1,d) = |U|$ is the number of configurations in $\O_\L$
with exactly $n-1$ vacancies.

\subsubsection{Upper bound}
The upper bound follows by Rayleigh's monotonicity principle combined
with Lemma~\ref{lem:capbnds}.
Fix $x\in{\mathbb Z}_+^d$ such that $\|x-x_*\|_1+1 \in[2^{n-1},2^n)$ where
$x_*=(1,1,\ldots,1)$, and let $\L= \prod_{i=1}^d[1,x_i]$. Lemma~\ref
{lem:capbnds} implies
\[
T^\mathrm{ min}(x;q)= \bbE_{\mathds1}^{\L,\mathrm{ min}}(
\t_x) \leq R^\mathrm{ min}_{\1, B_x},
\]
where $B_x = \{\h\in\O_{\L} \dvtx \h_x = 0\}$.
Rayleigh's monotonicity principle (see, e.g., \cite{Levin2008}, Theorem~9.12) implies that, for any set of conductances $\cC'(\h
,\xi
)$ defined on $\O_\L^2$ with $\cC'(\h,\xi) \leq\cC^\mathrm{ min}(\h
,\xi)$
for all $(\h,\xi) \in\O_\L^2$ the associated resistance satisfies
$R'_{\1, B_x} \geq R^\mathrm{ min}_{\1, B_x}$.
Consider the directed spanning tree as in Section~\ref{sec:tree}.
Let $\G$ be all the vertices in the branch from $r$ to $x$.
Now define new conductances by
\begin{eqnarray*}
\cC'(\h,\xi) = \cases{\cC^\mathrm{ min}(\h,\xi), &\quad $
\mbox{if } \h_{\L\setminus\G} = \xi _{\L
\setminus\G}=\1,$\vspace*{2pt}
\cr
0, &\quad $\mbox{otherwise.}$ }
\end{eqnarray*}
The resulting resistance graph is isomorphic to that of the East
process on $[1,|\G|]$.
So, if we let $T_\mathrm{ East}(|\G|;q)$ be the mean hitting time of $\h
_{|\G|} = 0$ in the one-dimensional process we have
%
\begin{equation}
\label{eq:pasqua2} T^\mathrm{ min}(x;q) \leq R^\mathrm{
min}_{\1, B_x} \leq R'_{\1, B_x} \leq c
T_\mathrm{ East}\bigl(|\G|;q\bigr) \leq2^{n\theta_q +O_n(1)},
\end{equation}
where the penultimate inequality is due to Lemma~\ref{lem:capbnds}
(with $d=1$) and the final inequality is due to previous bounds on the
mean hitting time in the East process (see, e.g., \cite{CFM}, Theorem~1 and
equations (2.6) and~(3.1)).

\begin{appendix}\label{app}
\section*{Appendix: On the rate of decay of the persistence function}
Consider the East-like process in ${\mathbb Z}^d$ and let $\t$ be the first
time that there is a legal ring
at the origin. Let $F(t):= \bbP_\pi(\t>t)$ be the \emph{persistence
function} (see, e.g., \cite{Harrowell,SE1}) and let
$A(t):=\var_\pi(e^{t\cL}\eta_0)^{1/2}$. Notice that, using reversibility,
\[
A(t/2)^2= \var_\pi \bigl(e^{({t}/{2})\cL}
\eta_0 \bigr)=\pi \bigl(\eta_0 e^{t \cL
}
\eta_0 \bigr)-p^2
\]
that is, it coincides
with the time autocorrelation at time $t$ of the spin at the origin.
In analogy with the stochastic Ising model \cite{Holley}, it is very
natural to conjecture that $A(t)$ and $F(t)$ vanish
exponentially fast as $t\to\infty$, with a rate \emph{equal} to the
spectral gap of
the generator $\cL$. Here, we
show that the rate of exponential decay of $F(t)$ and $A(t)$ coincide
in any dimension
and we prove the above conjecture in one dimension (i.e., for the East model).
%
\begin{theorema}
\label{persistence2}
Consider the East-like process on ${\mathbb Z}^d$. Then
\begin{eqnarray*}
\limsup_{t\to\infty} t^{-1} \log F(t)&=&\limsup
_{t\to\infty} t^{-1} \log A(t),
\\
\liminf_{t\to\infty} t^{-1} \log F(t)&=&\liminf
_{t\to\infty} t^{-1} \log A(t).
\end{eqnarray*}
In the one-dimensional case $d=1$,
\[
\lim_{t\to\infty} t^{-1} \log F(t)=\lim
_{t\to\infty} t^{-1} \log A(t)= -\gap(\cL).
\]
\end{theorema}
%
\begin{remarka}
As will be clear from the proof, the last statement applies also to the
constrained model in ${\mathbb Z}^d$, $d\ge1$, in which the constraint
at $x$
requires that all the neighbors of $x$ of the form $y = x - e$, $e \in
\cB$ contain a vacancy.
These models share with the one-dimensional East process the key feature
that, starting from a configuration with no vacancies in
$\L=[-L+1,0]^d$, at the time of the first legal ring at the origin,
all vertices in $\L$ have been updated at least once.
\end{remarka}
To prove the theorem, we first need two basic lemmas.
%
\begin{lemmaa}
\label{persistence}
For all $t>0$,
%
\setcounter{equation}{0}
\begin{equation}
\label{eq:A} \frac{1}{(p\vee q)^2}A^2(t/2) \le F(t) \le
\frac{1}{(p\wedge q)} A(t).
\end{equation}
In particular,
%
\begin{equation}
\label{eq:B} F(t)\le\frac{1}{(p\wedge q)} e^{-t/\trel({\mathbb Z}^d;q)}.
\end{equation}
\end{lemmaa}
%
\begin{remarka}
The above result considerably refines a previous bound given in \cite{CMRT},
Theorem~3.6.
\end{remarka}
\begin{pf*}{Proof of the Lemma~\ref{persistence}}
Clearly, \eqref{eq:A} implies \eqref{eq:B}. To prove \eqref{eq:A}, for
any $\eta\in\O$ we write
\[
\bbE_\eta \bigl(\eta_0(t)-p \bigr)= (
\eta_0-p)\bbP_\eta(\t> t) + \bbE_\eta \bigl(
\eta_0(t)-p\mid\t\le t \bigr) \bbP_\eta(\t\le t).
\]
By the very definition of the East-like process, the law of
$\eta_0(t)$ given that $\{\t\leq t\}$ is a $\operatorname{Bernoulli}(p)$. Hence, the
second term in the
RHS above is zero. Thus,
\begin{eqnarray*}
A(t)&=& \pi \bigl( \bigl[ \bbE_\eta \bigl(\eta_0(t)-p
\bigr) \bigr]^2 \bigr)^{1/2}
\\
&=& \pi \bigl((\eta_0-p)^2 \bbP_\eta(\t>
t)^2 \bigr)^{1/2}
\\
&\ge&(p \wedge q) \bbP_\pi(\t> t) = (p \wedge q) F(t),
\end{eqnarray*}
and the sought upper bound follows. Similarly,
\begin{eqnarray*}\qquad\qquad
A^2(t/2)&=& \pi \bigl((\eta_0-p)\bbE_\eta
\bigl(\eta_0(t)-p \bigr) \bigr)
=\pi \bigl((\eta_0-p)^2\bbP_\eta(\t>t)
\bigr)
\\
&\le& (p\vee q)^2 \bbP_\pi(\t> t)=(p\vee q)^2
F(t).\hspace*{98pt}\qed
\end{eqnarray*}
\noqed\end{pf*}
%
The second lemma specializes to the one-dimensional case and it extends
a coupling result proved in
\cite{CFM}, Section~1.2. Fix an integer $L$ and let $\L=[-L,0]$.
Consider the East process on
the negative semi-infinite lattice ${\mathbb Z}^-:=(-\infty,0]$, with initial
distribution $\mu_{\pi,\omega}$ given by the product of the equilibrium
measure $\pi$ on $\O_{(-\infty,-(L+1)]}$ and the Dirac mass on
$\omega\in\O
_\L$. Let also $\mu_{\pi,\omega}^t$
be the corresponding law at a later time $t>0$.
%
\begin{lemmaa}
\label{coupling}
Let $d_\L(t)=\max_\omega\|\mu_{\pi,\omega}^t -\pi\|_\mathrm{ TV}$, where
$\|\cdot\|_\mathrm{ TV}$ denotes the total variation distance. Then
%
\begin{equation}
\label{eq:persi1} d_\L(t)\le (1/p )^{L+1}F(t).
\end{equation}
\end{lemmaa}
\begin{pf}
Let $\mathds1$ be the configuration in $\O_\L$ identically equal to
one and let $F_{\pi,\mathds1}(t)=\int \,d\mu_{\pi,\mathds
1}(\eta)\bbP_\eta(\t>t)$. Let $\eta^{\s,\omega}(\cdot)$ be
the East process on ${\mathbb Z}^-$ given by the graphical
construction, started from the
initial configuration equal to $\s$ on $(-\infty, -(L+1)]$ and to
$\omega$ on $\L$. Let also $X^{\s,\mathds1}_t$ be the largest $x\in
\L$ such
that, starting from the configuration equal to $\s$ on $(-\infty,
-(L+1)]$ and to
$\mathds1$ on $\L$, there has been a legal ring at $x$ before time
$t$. If no point in $\L$ had a legal ring before $t$, we set
$X^{\s,\mathds1}_t=-(L+1)$.
%
\begin{claima}
For all $\s,\omega,\omega'$ and all $t$, the two configurations
$\eta^{\s,\omega}(t), \eta^{\s,\omega'}(t)$
coincide on the semi-infinite interval $(-\infty,X^\s_t]$.
\end{claima}
If we assume the claim, we get that
\begin{eqnarray*}
\max_\omega\bigl\|\mu_{\pi,\omega}^t -\pi
\bigr\|_\mathrm{ TV}&\le& \max_{\omega,\omega'}\bigl\|\mu_{\pi,\omega}^t-
\mu_{\pi,\omega'}^t\bigr\| _\mathrm{ TV}
\\
&\le&\max_{\omega,\omega'}\int \,d\pi(\s)\bbP \bigl(\eta^{\s,\omega}(t)
\neq\eta^{\s,\omega'}(t) \bigr)
\\
&=&\int \,d\pi(\s)\bbP \bigl(X^{\s,\mathds1}_t<0
\bigr)=F_{\pi,{\mathds1}}(t) \le (1/p)^{L+1}F(t).
\end{eqnarray*}
The claim is proved inductively. By the oriented character of the East
process, the two configurations $\eta^{\s,\omega}(t),\eta^{\s
,\omega'}(t)$
will remain equal inside the semi-infinite interval $(-\infty,-(L+1)]$
for any $t\ge0$. It is also clear by the graphical construction that
once the vertex $x=-L$ is updated [at the same time for both
$\eta^{\s,\omega}(\cdot),\eta^{\s,\omega'}(\cdot)$], the two
configurations become
equal in $(-\infty,-L]$ and stay equal there forever. By repeating
this argument for the vertices $-L+1,-L+2,\ldots,$ we get the claim.
\end{pf}
%
%
\begin{pf*}{Proof of Theorem~\ref{persistence2}}
The first part
follows at once from Lemma~\ref{persistence}. To prove the second part,
we observe that, using again Lemma~\ref{persistence}, it is enough to
show that, for the East model,
\[
\liminf_{t\to\infty}t^{-1}\log F(t)\ge-\gap(\cL).
\]
For this purpose, fix an integer $L$, let $\L=[-L,0]$ and let $\phi$
denotes the
eigenvector of $\cL^\mathrm{ max}_\L$ with eigenvalue
$-\gap(\cL^\mathrm{ max}_\L)$, normalized in such a way that $\var_\pi
(\phi
)=1$. We start by observing that
%
\begin{equation}
\label{eq:persi2} \var_\pi \bigl(e^{t\cL}\phi \bigr)\ge
e^{-2t \cD(\phi)}\ge e^{-2t
\cD_\L^\mathrm{ max}(\phi)} = e^{-2t
\gap(\cL_{\L}^\mathrm{ max})}.
\end{equation}
To prove the first bound, we use the spectral theorem for the
self-adjoint operator
$\cL$. Let $\nu_\phi(\cdot)$ be the spectral measure (for the infinite
system) associated to $\phi$. Clearly,
$\nu_\phi$ is a probability measure. Using Jensen's inequality, we get
\[
\var_\pi \bigl(e^{t\cL}\phi \bigr)=\int
^{\infty}_0 e^{- 2t
\lambda}\,d
\nu_\phi( \lambda)\ge e^{-2t \int^{\infty}_0\lambda \,d\nu
_\phi(\lambda)}= e^{-2t \cD
(\phi)}.
\]
We now prove an upper bound on $\var_\pi (e^{t\cL}\phi )$ in
terms of the persistence function $F(t)$.

Recall
the definition of the law $\mu_{\pi,\omega}^t$ in Lemma~\ref{coupling}. Using reversibility and the fact that $\pi_{\L}(\phi
)=0$, we get
%
\begin{equation}\qquad
\label{eq:persi3} \var_\pi \bigl(e^{t\cL}\phi \bigr)=
\mathrm{ Cov}_\pi \bigl(\phi, e^{2t\cL}\phi \bigr) =\sum
_{\omega\in\O_\L}\pi(\omega)\phi(\omega) \bigl[
\mu_{\pi
,\omega}^{2t}( \phi)-\pi_\L (\phi) \bigr].
\end{equation}
Above we used the oriented character of the East model to get that the
marginal on $\O_{\L}$ of the law at time $t$ of the East process on
${\mathbb Z}$ coincides with
the same marginal for the process on the half lattice ${\mathbb Z}^-$.
Using Lemma~\ref{coupling}, the RHS of \eqref{eq:persi3}
can be bounded from above by
%
\begin{equation}
\label{eq:C} \sum_{\omega\in\O_\L}\pi(\omega)\phi(\omega)
\bigl[\mu_{\pi,\omega}^{2t}(\phi)-\pi_\L(\phi) \bigr]
\le \frac{1}2 \|\phi\|_\infty^2 (1/p)^{L+1}F(2t).
\end{equation}
In conclusion, by combining \eqref{eq:persi2}, \eqref{eq:persi3} and
\eqref{eq:C} we
get that
\[
F(2t)\ge\frac{2}{\|\phi\|_\infty^2}p^{L+1}e^{-2t\gap(\cL_\L^\mathrm{ max})},
\]
which, in turn, implies that
\[
\liminf_{t\to\infty}t^{-1}\log F(t)\ge-\gap \bigl(
\cL_\L^\mathrm{ max} \bigr)\qquad \forall L\ge1.
\]
Since $\gap(\cL_\L^\mathrm{ max})\to\gap(\cL)$ as $L\to\infty$ (see
\cite{CMRT}), we get that
\[
\liminf_{t\to\infty}t^{-1}\log F(t) \geq-\gap(\cL),
\]
as required.
\end{pf*}
\end{appendix}

\section*{Acknowledgments}
We would like to thank J. P. Garrahan for some inspirational discussion
about kinetically constrained models and
for his encouragement to analyze
the generalizations of the East process to higher dimensions.
We would also like to thank C. Toninelli for pointing out an
unjustified remark in the \hyperref[app]{Appendix} of the preprint.
Finally, the
authors would like to thank the anonymous referee whose comments and corrections
have allowed us to improve the presentation.


\begin{thebibliography}{35}

\bibitem{Aldous}
\begin{barticle}[mr]
\bauthor{\bsnm{Aldous},~\bfnm{David}\binits{D.}} \AND
\bauthor{\bsnm{Diaconis},~\bfnm{Persi}\binits{P.}}
(\byear{2002}).
\btitle{The asymmetric one-dimensional constrained {I}sing model: Rigorous results}.
\bjournal{J. Stat. Phys.}
\bvolume{107}
\bpages{945--975}.
\bid{doi={10.1023/A:1015170205728}, issn={0022-4715}, mr={1901508}}
\bptnote{check year}%
\end{barticle}
%
\bptok{imsref}%
\endbibitem

\bibitem{FH}
\begin{barticle}[author]
\bauthor{\bsnm{Andersen},~\bfnm{Hans~C.}\binits{H.~C.}} \AND
\bauthor{\bsnm{Fredrickson},~\bfnm{Glenn~H.}\binits{G.~H.}}
(\byear{1984}).
\btitle{Kinetic Ising model of the glass transition}.
\bjournal{Phys. Rev. Lett.}
\bvolume{53}
\bpages{1244--1247}.
\end{barticle}
%
\bptok{imsref}%
\endbibitem

\bibitem{Garrahan}
\begin{barticle}[author]
\bauthor{\bsnm{Ashton},~\bfnm{Douglas}\binits{D.}},
\bauthor{\bsnm{Hedges},~\bfnm{Lester~O.}\binits{L.~O.}} \AND
\bauthor{\bsnm{Garrahan},~\bfnm{Juan~P.}\binits{J.~P.}}
(\byear{2005}).
\btitle{Fast simulation of facilitated spin models}.
\bjournal{J. Stat. Mech. Theory Exp.}
\bvolume{40}
\bpages{P12010}.
\end{barticle}
%
\bptok{imsref}%
\endbibitem

\bibitem{Beltran}
\begin{barticle}[author]
\bauthor{\bsnm{Beltr{\'a}n},~\bfnm{J.}\binits{J.}} \AND
\bauthor{\bsnm{Landim},~\bfnm{C.}\binits{C.}}
(\byear{2015}).
\btitle{A martingale approach to metastability}.
\bjournal{Probab. Theory Related Fields}
\bvolume{161}
\bpages{267--307}.
\bid{mr={3304753}}
\end{barticle}
%
\bptok{imsref}%
\endbibitem

\bibitem{Bhatnagar:2007tr}
\begin{barticle}[mr]
\bauthor{\bsnm{Bhatnagar},~\bfnm{Nayantara}\binits{N.}},
\bauthor{\bsnm{Caputo},~\bfnm{Pietro}\binits{P.}},
\bauthor{\bsnm{Tetali},~\bfnm{Prasad}\binits{P.}} \AND
\bauthor{\bsnm{Vigoda},~\bfnm{Eric}\binits{E.}}
(\byear{2007}).
\btitle{Analysis of top-swap shuffling for genome rearrangements}.
\bjournal{Ann. Appl. Probab.}
\bvolume{17}
\bpages{1424--1445}.
\bid{doi={10.1214/105051607000000177}, issn={1050-5164}, mr={2344312}}
\bptnote{check year}%
\end{barticle}
%
\bptok{imsref}%
\endbibitem

\bibitem{Blondel}
\begin{barticle}[mr]
\bauthor{\bsnm{Blondel},~\bfnm{Oriane}\binits{O.}}
(\byear{2013}).
\btitle{Front progression in the {E}ast model}.
\bjournal{Stochastic Process. Appl.}
\bvolume{123}
\bpages{3430--3465}.
\bid{doi={10.1016/j.spa.2013.04.014}, issn={0304-4149}, mr={3071385}}
\end{barticle}
%
\bptok{imsref}%
\endbibitem

\bibitem{Blondel2}
\begin{barticle}[author]
\bauthor{\bsnm{Blondel},~\bfnm{Oriane}\binits{O.}}
(\byear{2015}).
\btitle{Tracer diffusion at low temperature in kinetically constrained models}.
\bjournal{Ann. Appl. Probab.}
\bvolume{25}
\bpages{1079--1107}.
\bid{mr={3325269}}
\end{barticle}
%
\bptok{imsref}%
\endbibitem

\bibitem{BovierNew}
\begin{bincollection}[mr]
\bauthor{\bsnm{Bovier},~\bfnm{Anton}\binits{A.}}
(\byear{2006}).
\btitle{Metastability: A potential theoretic approach}.
In \bbooktitle{International {C}ongress of {M}athematicians. {V}ol. {III}}
\bpages{499--518}.
\bpublisher{Eur. Math. Soc.},
\blocation{Z\"urich}.
\bid{mr={2275693}}
\bptnote{check year}%
\end{bincollection}
%
\bptok{imsref}%
\endbibitem

\bibitem{Praga}
\begin{bincollection}[mr]
\bauthor{\bsnm{Cancrini},~\bfnm{N.}\binits{N.}},
\bauthor{\bsnm{Martinelli},~\bfnm{F.}\binits{F.}},
\bauthor{\bsnm{Roberto},~\bfnm{C.}\binits{C.}} \AND
\bauthor{\bsnm{Toninelli},~\bfnm{C.}\binits{C.}}
(\byear{2007}).
\btitle{Facilitated spin models: Recent and new results}.
In \bbooktitle{Methods of contemporary mathematical statistical physics}
\bpages{307--340}.
\bpublisher{Springer}, \blocation{Berlin}.
\bid{mr={2581609}}
\end{bincollection}
%
\bptok{imsref}%
\endbibitem

\bibitem{CMRT}
\begin{barticle}[mr]
\bauthor{\bsnm{Cancrini},~\bfnm{N.}\binits{N.}},
\bauthor{\bsnm{Martinelli},~\bfnm{F.}\binits{F.}},
\bauthor{\bsnm{Roberto},~\bfnm{C.}\binits{C.}} \AND
\bauthor{\bsnm{Toninelli},~\bfnm{C.}\binits{C.}}
(\byear{2008}).
\btitle{Kinetically constrained spin models}.
\bjournal{Probab. Theory Related Fields}
\bvolume{140}
\bpages{459--504}.
\bid{doi={10.1007/s00440-007-0072-3}, issn={0178-8051}, mr={2365481}}
\bptnote{check year}%
\end{barticle}
%
\bptok{imsref}%
\endbibitem

\bibitem{CMST}
\begin{barticle}[mr]
\bauthor{\bsnm{Cancrini},~\bfnm{N.}\binits{N.}},
\bauthor{\bsnm{Martinelli},~\bfnm{F.}\binits{F.}},
\bauthor{\bsnm{Schonmann},~\bfnm{R.}\binits{R.}} \AND
\bauthor{\bsnm{Toninelli},~\bfnm{C.}\binits{C.}}
(\byear{2010}).
\btitle{Facilitated oriented spin models: Some non equilibrium results}.
\bjournal{J. Stat. Phys.}
\bvolume{138}
\bpages{1109--1123}.
\bid{doi={10.1007/s10955-010-9923-x}, issn={0022-4715}, mr={2601425}}
\bptnote{check year}%
\end{barticle}
%
\bptok{imsref}%
\endbibitem

\bibitem{PietroCaputo:2012vl}
\begin{barticle}[mr]
\bauthor{\bsnm{Caputo},~\bfnm{Pietro}\binits{P.}},
\bauthor{\bsnm{Lubetzky},~\bfnm{Eyal}\binits{E.}},
\bauthor{\bsnm{Martinelli},~\bfnm{Fabio}\binits{F.}},
\bauthor{\bsnm{Sly},~\bfnm{Allan}\binits{A.}} \AND
\bauthor{\bsnm{Toninelli},~\bfnm{Fabio~Lucio}\binits{F.~L.}}
(\byear{2014}).
\btitle{Dynamics of {$(2+1)$}-dimensional SOS surfaces above a wall: Slow mixing induced by entropic repulsion}.
\bjournal{Ann. Probab.}
\bvolume{42}
\bpages{1516--1589}.
\bid{doi={10.1214/13-AOP836}, issn={0091-1798}, mr={3262485}}
\bptnote{check year}%
\end{barticle}
%
\bptok{imsref}%
\endbibitem

\bibitem{CFM-JSTAT}
\begin{barticle}[author]
\bauthor{\bsnm{Chleboun},~\bfnm{Paul}\binits{P.}},
\bauthor{\bsnm{Faggionato},~\bfnm{Alessandra}\binits{A.}} \AND
\bauthor{\bsnm{Martinelli},~\bfnm{Fabio}\binits{F.}}
(\byear{2013}).
\btitle{Time scale separation in the low temperature East model: Rigorous results}.
\bjournal{J. Stat. Mech. Theory Exp.}
\bpages{L04001}.
\end{barticle}
%
\bptok{imsref}%
\endbibitem

\bibitem{CFM}
\begin{barticle}[mr]
\bauthor{\bsnm{Chleboun},~\bfnm{P.}\binits{P.}},
\bauthor{\bsnm{Faggionato},~\bfnm{A.}\binits{A.}} \AND
\bauthor{\bsnm{Martinelli},~\bfnm{F.}\binits{F.}}
(\byear{2014}).
\btitle{Time scale separation and dynamic heterogeneity in the low temperature {E}ast model}.
\bjournal{Comm. Math. Phys.}
\bvolume{328}
\bpages{955--993}.
\bid{doi={10.1007/s00220-014-1985-1}, issn={0010-3616}, mr={3201217}}
\end{barticle}
%
\bptok{imsref}%
\endbibitem

\bibitem{EPL}
\begin{barticle}[author]
\bauthor{\bsnm{Chleboun},~\bfnm{P.}\binits{P.}},
\bauthor{\bsnm{Faggionato},~\bfnm{A.}\binits{A.}} \AND
\bauthor{\bsnm{Martinelli},~\bfnm{F.}\binits{F.}}
(\byear{2014}).
\btitle{{The influence of dimension on the relaxation process of East-like models: Rigorous results}}.
\bjournal{EPL}
\bvolume{107}
\bpages{36002}.
\end{barticle}
%
\bptok{imsref}%
\endbibitem

\bibitem{CDG}
\begin{barticle}[mr]
\bauthor{\bsnm{Chung},~\bfnm{Fan}\binits{F.}},
\bauthor{\bsnm{Diaconis},~\bfnm{Persi}\binits{P.}} \AND
\bauthor{\bsnm{Graham},~\bfnm{Ronald}\binits{R.}}
(\byear{2001}).
\btitle{Combinatorics for the {E}ast model}.
\bjournal{Adv. in Appl. Math.}
\bvolume{27}
\bpages{192--206}.
\bid{doi={10.1006/aama.2001.0728}, issn={0196-8858}, mr={1835679}}
\bptnote{check year}%
\end{barticle}
%
\bptok{imsref}%
\endbibitem

\bibitem{Diaconis}
\begin{barticle}[mr]
\bauthor{\bsnm{Diaconis},~\bfnm{Persi}\binits{P.}} \AND
\bauthor{\bsnm{Saloff-Coste},~\bfnm{Laurent}\binits{L.}}
(\byear{1993}).
\btitle{Comparison theorems for reversible {M}arkov chains}.
\bjournal{Ann. Appl. Probab.}
\bvolume{3}
\bpages{696--730}.
\bid{issn={1050-5164}, mr={1233621}}
\bptnote{check year}%
\end{barticle}
%
\bptok{imsref}%
\endbibitem

\bibitem{durrett}
\begin{bbook}[author]
\bauthor{\bsnm{Durrett},~\bfnm{Rick}\binits{R.}}
(\byear{2010}).
\btitle{Probability: Theory and examples}.
\bpublisher{Cambridge University Press}, \blocation{Cambridge}.
\end{bbook}
%
\bptok{imsref}%
\endbibitem

\bibitem{FMRT-cmp}
\begin{barticle}[mr]
\bauthor{\bsnm{Faggionato},~\bfnm{A.}\binits{A.}},
\bauthor{\bsnm{Martinelli},~\bfnm{F.}\binits{F.}},
\bauthor{\bsnm{Roberto},~\bfnm{C.}\binits{C.}} \AND
\bauthor{\bsnm{Toninelli},~\bfnm{C.}\binits{C.}}
(\byear{2012}).
\btitle{Aging through hierarchical coalescence in the {E}ast model}.
\bjournal{Comm. Math. Phys.}
\bvolume{309}
\bpages{459--495}.
\bid{doi={10.1007/s00220-011-1376-9}, issn={0010-3616}, mr={2864800}}
\bptnote{check year}\end{barticle}
\bptok{imsref}\endbibitem

\bibitem{FMRT}
\begin{barticle}[mr]
\bauthor{\bsnm{Faggionato},~\bfnm{Alessandra}\binits{A.}},
\bauthor{\bsnm{Martinelli},~\bfnm{Fabio}\binits{F.}},
\bauthor{\bsnm{Roberto},~\bfnm{Cyril}\binits{C.}} \AND
\bauthor{\bsnm{Toninelli},~\bfnm{Cristina}\binits{C.}}
(\byear{2012}).
\btitle{Universality in one-dimensional hierarchical coalescence processes}.
\bjournal{Ann. Probab.}
\bvolume{40}
\bpages{1377--1435}.
\bid{doi={10.1214/11-AOP654}, issn={0091-1798}, mr={2978129}}
\bptnote{check year}%
\end{barticle}
%
\bptok{imsref}%
\endbibitem

\bibitem{East-Rassegna}
\begin{barticle}[mr]
\bauthor{\bsnm{Faggionato},~\bfnm{A.}\binits{A.}},
\bauthor{\bsnm{Martinelli},~\bfnm{F.}\binits{F.}},
\bauthor{\bsnm{Roberto},~\bfnm{C.}\binits{C.}} \AND
\bauthor{\bsnm{Toninelli},~\bfnm{C.}\binits{C.}}
(\byear{2013}).
\btitle{The {E}ast model: Recent results and new progresses}.
\bjournal{Markov Process. Related Fields}
\bvolume{19}
\bpages{407--452}.
\bid{issn={1024-2953}, mr={3156959}}
\bptnote{check year}%
\end{barticle}
%
\bptok{imsref}%
\endbibitem

\bibitem{FRT}
\begin{barticle}[mr]
\bauthor{\bsnm{Faggionato},~\bfnm{A.}\binits{A.}},
\bauthor{\bsnm{Roberto},~\bfnm{C.}\binits{C.}} \AND
\bauthor{\bsnm{Toninelli},~\bfnm{C.}\binits{C.}}
(\byear{2014}).
\btitle{Universality for one-dimensional hierarchical coalescence processes with double and triple merges}.
\bjournal{Ann. Appl. Probab.}
\bvolume{24}
\bpages{476--525}.
\bid{doi={10.1214/12-AAP917}, issn={1050-5164}, mr={3178489}}
\bptnote{check year}%
\end{barticle}
%
\bptok{imsref}%
\endbibitem

\bibitem{East-cutoff}
\begin{barticle}[author]
\bauthor{\bsnm{Ganguly},~\bfnm{Shirshendu}\binits{S.}},
\bauthor{\bsnm{Lubetzky},~\bfnm{Eyal}\binits{E.}} \AND
\bauthor{\bsnm{Martinelli},~\bfnm{Fabio}\binits{F.}}
(\byear{2015}).
\btitle{{Cutoff for the East process}}.
\bjournal{Comm. Math. Phys.}
\bvolume{335}
\bpages{1287--1322}.
\bid{mr={3320314}}
\end{barticle}
%
\bptok{imsref}%
\endbibitem

\bibitem{Garrahan2003}
\begin{barticle}[pbm]
\bauthor{\bsnm{Garrahan},~\bfnm{Juan~P.}\binits{J.~P.}} \AND
\bauthor{\bsnm{Chandler},~\bfnm{David}\binits{D.}}
(\byear{2003}).
\btitle{Coarse-grained microscopic model of glass formers}.
\bjournal{Proc. Natl. Acad. Sci. USA}
\bvolume{100}
\bpages{9710--9714}.
\bid{doi={10.1073/pnas.1233719100}, issn={0027-8424}, pii={1233719100}, pmcid={187830}, pmid={12900500}}
\bptnote{check pages}%
\end{barticle}
%
\bptok{imsref}%
\endbibitem

\bibitem{G09}
\begin{bmisc}[author]
\bauthor{\bsnm{Gaudilliere},~\bfnm{A.}\binits{A.}}
(\byear{2009}).
\bhowpublished{Condenser physics applied to {M}arkov chains---A brief introduction to potential theory.
Lecture notes for the 12th Brazilian School of Probability. Available at \arxivurl{arXiv:0901.3053}.}
\end{bmisc}
%
\bptok{imsref}%
\endbibitem

\bibitem{Gibbons}
\begin{bbook}[author]
\bauthor{\bsnm{Gibbons},~\bfnm{Alan}\binits{A.}}
(\byear{1985}).
\btitle{Algorithmic Graph Theory}.
\bpublisher{Cambridge Univ. Press},
\blocation{Cambridge}.
\end{bbook}
%
\bptok{imsref}%
\endbibitem

\bibitem{Harrowell}
\begin{barticle}[author]
\bauthor{\bsnm{{H}arrowell},~\bfnm{P.}\binits{P.}}
(\byear{1993}).
\btitle{{Visualizing the collective motion responsible for $\alpha $ and $\beta $ relaxations in a model glass}}.
\bjournal{Phys. Rev. E (3)}
\bvolume{48}
\bpages{4359--4363}.
\end{barticle}
%
\bptok{imsref}%
\endbibitem

\bibitem{Holley}
\begin{bincollection}[author]
\bauthor{\bsnm{Holley},~\bfnm{Richard}\binits{R.}}
(\byear{1991}).
\btitle{On the asymptotics of the spin--spin autocorrelation function in stochastic Ising models near the critical temperature}.
In \bbooktitle{Spatial stochastic processes: A Festschrift in honor of the Seventieth Birthday of Ted Harris}.
\bpublisher{Birkh\"{a}user},
\blocation{Boston, MA}.
\end{bincollection}
%
\bptok{imsref}%
\endbibitem

\bibitem{JACKLE}
\begin{barticle}[author]
\bauthor{\bsnm{J{\"a}ckle},~\bfnm{J.}\binits{J.}} \AND
\bauthor{\bsnm{Eisinger},~\bfnm{S.}\binits{S.}}
(\byear{1991}).
\btitle{A hierarchically constrained kinetic {I}sing model}.
\bjournal{Z.~Phys. B: Condens. Matter}
\bvolume{84}
\bpages{115--124}.
\end{barticle}
%
\bptok{imsref}%
\endbibitem

\bibitem{Keys2013}
\begin{barticle}[author]
\bauthor{\bsnm{Keys},~\bfnm{A.~S.}\binits{A.~S.}},
\bauthor{\bsnm{Garrahan},~\bfnm{J.~P.}\binits{J.~P.}} \AND
\bauthor{\bsnm{Chandler},~\bfnm{D.}\binits{D.}}
(\byear{2013}).
\btitle{{Calorimetric glass transition explained by hierarchical dynamic facilitation}}.
\bjournal{Proc. Natl. Acad. Sci. USA}
\bvolume{110}
\bpages{4482--4487}.
\end{barticle}
%
\bptok{imsref}%
\endbibitem

\bibitem{Levin2008}
\begin{bbook}[author]
\bauthor{\bsnm{Levin},~\bfnm{D.~A.}\binits{D.~A.}},
\bauthor{\bsnm{Peres},~\bfnm{Y.}\binits{Y.}} \AND
\bauthor{\bsnm{Wilmer},~\bfnm{E.~L.}\binits{E.~L.}}
(\byear{2008}).
\btitle{{M}arkov Chains and Mixing Times}.
\bpublisher{Amer. Math. Soc.},
\blocation{Providence, RI}.
\end{bbook}
%
\bptok{imsref}%
\endbibitem

\bibitem{Peres-Sly}
\begin{barticle}[mr]
\bauthor{\bsnm{Peres},~\bfnm{Yuval}\binits{Y.}} \AND
\bauthor{\bsnm{Sly},~\bfnm{Allan}\binits{A.}}
(\byear{2013}).
\btitle{Mixing of the upper triangular matrix walk}.
\bjournal{Probab. Theory Related Fields}
\bvolume{156}
\bpages{581--591}.
\bid{doi={10.1007/s00440-012-0436-1}, issn={0178-8051}, mr={3078280}}
\bptnote{check year}%
\end{barticle}
%
\bptok{imsref}%
\endbibitem

\bibitem{Saloff}
\begin{bincollection}[author]
\bauthor{\bsnm{Saloff-Coste},~\bfnm{Laurent}\binits{L.}}
(\byear{1997}).
\btitle{Lectures on finite {M}arkov chains}
(\beditor{\binits{P.}\bfnm{P.} \bsnm{Bernard}}, ed.).
\bseries{Lecture Notes in Math.}
\bvolume{1665}.
\bpublisher{Springer},
\blocation{Berlin}.
\end{bincollection}
\bptok{imsref}%
\endbibitem

\bibitem{SE2}
\begin{barticle}[author]
\bauthor{\bsnm{Sollich},~\bfnm{P.}\binits{P.}} \AND
\bauthor{\bsnm{Evans},~\bfnm{M.~R.}\binits{M.~R.}}
(\byear{1999}).
\btitle{Glassy time-scale divergence and anomalous coarsening in a kinetically constrained spin chain}.
\bjournal{Phys. Rev. Lett.}
\bvolume{83}
\bpages{3238--3241}.
\end{barticle}
%
\bptok{imsref}%
\endbibitem

\bibitem{SE1}
\begin{barticle}[author]
\bauthor{\bsnm{Sollich},~\bfnm{P.}\binits{P.}} \AND
\bauthor{\bsnm{Evans},~\bfnm{M.~R.}\binits{M.~R.}}
(\byear{2003}).
\btitle{Glassy dynamics in the asymmetrically constrained kinetic {I}sing chain}.
\bjournal{Phys. Rev. E (3)}
\bvolume{68}
\bpages{031504}.
\end{barticle}
%
\bptok{imsref}%
\endbibitem

\end{thebibliography}
%

\printaddresses
\end{document}